\tikzset{->-/.style={decoration={
  markings,
  mark=at position #1 with {\arrow{Computer Modern Rightarrow[length=5pt,width=5pt]}}},postaction={decorate}}}
\tikzset{->-rev/.style={decoration={
  markings,
  mark=at position #1 with {\arrow{Computer Modern Rightarrow[length=5pt,width=5pt,reversed]}}},postaction={decorate}}}
\title[Endomorphisms of Artin groups of type $B_n$]{Endomorphisms of Artin groups of type $B_n$}
\author[L Paris]{Luis Paris}
\address{Luis Paris, IMB, UMR 5584, CNRS, Université Bourgogne Europe, 21000 Dijon, France}
\email{lparis@u-bourgogne.fr}
\author[I Soroko]{Ignat Soroko}
\address{Ignat Soroko, Department of Mathematics, University of North Texas, Denton, TX 76203-5017, USA}
\email{ignat.soroko@unt.edu, ignat.soroko@gmail.com}
\newtheorem{thm}{Theorem}[section]
\newtheorem{lem}[thm]{Lemma}
\newtheorem{prop}[thm]{Proposition}
\newtheorem{corl}[thm]{Corollary}
\newtheorem{problem}[thm]{Problem}
\newtheoremstyle{special}{13.2pt plus6.6pt minus6.6pt}{6.6pt plus3.3pt minus3.3pt}%
{\sl}{}{\bf}{}{0.75em}{\thmname{#1}\thmnumber{ #2$^{\bm*}\!$}\thmnote{\rm\stdspace(#3)}} 		
\theoremstyle{special}
\newtheorem{probstar}[thm]{Problem}
\theoremstyle{definition}
\newtheorem{rem}{Remark}
\newtheorem*{acknow}{Acknowledgments}
\numberwithin{equation}{section}
\renewcommand{\thefigure}{\ifnum \c@section>\z@ \thesection.\fi
 \@arabic\c@figure}
\begin{document}

\def\N{\mathbb N} \def\conj{{\rm conj}} \def\Aut{{\rm Aut}}
\def\Inn{{\rm Inn}} \def\Out{{\rm Out}} \def\Z{\mathbb Z}
\def\id{{\rm id}} \def\supp{{\rm supp}} 
\renewcommand{\Im}{\operatorname{Im}} 
\def\Ker{{\rm Ker}} \def\PP{\mathcal P} \def\Homeo{{\rm Homeo}}
\def\SHomeo{{\rm SHomeo}} \def\LHomeo{{\rm LHomeo}}
\def\MM{\mathcal M} \def\CC{\mathcal C} \def\AA{\mathcal A}
\def\S{\mathbb S} \def\FF{\mathcal F} \def\SS{\mathcal S}
\def\LL{\mathcal L} \def\D{\mathbb D} \def\Fix{{\rm Fix}}

\newcommand{\card}{\operatorname{Card}}
\newcommand{\Sym}{\operatorname{Sym}}
\newcommand{\Cent}{\operatorname{Cent}}
\newcommand{\ov}{\overline}
\mathchardef\mhyphen="2D
\renewcommand{\ge}{\geqslant}
\renewcommand{\le}{\leqslant}

\begin{abstract}
We determine a classification of the endomorphisms of the Artin groups of spherical type $B_n$ for $n\ge 5$, and of their quotients by the center. 

\smallskip\noindent
{\bf AMS Subject Classification\ \ } 
Primary: 20F36, secondary: 57K20.

\smallskip\noindent
{\bf Keywords\ \ } 
Artin groups of type $B_n$, endomorphisms, automorphisms.

\end{abstract}

\maketitle


\section{Introduction}\label{sec1}

Let $S$ be a finite set.
A \emph{Coxeter matrix} over $S$ is a square matrix $M=(m_{s,t})_{s,t\in S}$ indexed by the elements of $S$, with coefficients in $\N \cup \{\infty\}$, such that $m_{s,s}=1$ for all $s \in S$, and $m_{s,t} = m_{t,s} \ge 2$ for all $s,t\in S$, $s\neq t$.
Such a matrix is usually represented by a labeled graph, $\Gamma$, called a \emph{Coxeter graph}, defined by the following data.
The set of vertices of $\Gamma$ is $S$.
Two vertices $s,t\in S$ are connected by an edge if $m_{s,t}\ge 3$, and this edge is labeled with $m_{s,t}$ if $m_{s,t} \ge 4$.

If $a,b$ are two letters and $m$ is an integer $\ge 2$, then we denote by $\Pi(a,b,m)$ the alternating word $aba\!\ldots$ of length $m$.
In other words, $\Pi(a,b,m) = (ab)^{\frac{m}{2}}$ if $m$ is even, and $\Pi(a,b,m) = (ab)^{\frac{m-1}{2}} a$ if $m$ is odd.
Let $\Gamma$ be a Coxeter graph and let $M=(m_{s,t})_{s,t\in S}$ be its Coxeter matrix.
With $\Gamma$ we associate a group, $A[\Gamma]$, called the \emph{Artin group} of $\Gamma$, defined by the presentation
\[
A[\Gamma] = \langle S \mid \Pi(s,t,m_{s,t}) = \Pi(t,s,m_{s,t}) \text{ for } s, t \in S\,,\ s \neq t\,,\ m_{s,t}\neq \infty \rangle\,.
\]
The \emph{Coxeter group} of $\Gamma$, denoted by $W[\Gamma]$, is the quotient of $A[\Gamma]$ by the relations $s^2 = 1$, $s \in S$. We say that $\Gamma$ is \emph{of spherical type} if $W[\Gamma]$ is finite.

Despite the popularity of Artin groups, little is known about their automorphisms and even less about their endomorphisms.
The most studied cases are the braid groups, corresponding to the Coxeter graphs $A_n$ ($n\ge 1$), and the right-angled Artin groups, for which $m_{s,t}\in\{2,\infty\}$ for all $s,t\in S$.
The automorphism group of $A[A_n]$ was determined in~\cite{DyeGro1} and the set of its endomorphisms in~\cite{Caste1} for $n\ge 5$, in~\cite{ChKoMa1} for $n \ge 4$ and in~\cite{Orevk1} for $n\ge2$.
On the other hand, there are many papers dealing with automorphism groups of right-angled Artin groups, but little is known about their endomorphisms.

Apart from these two classes, the Artin groups for which the automorphism group is determined are the $2$-generator Artin groups~\cite{GiHoMeRa1}, some $2$-dimensional Artin groups~\cite{Crisp1,AnCho1}, the large-type free-of-infinity Artin groups~\cite{Vasko1}, the Artin groups of type $B_n$, $\tilde A_n$ and $\tilde C_n$~\cite{ChaCri1}, the Artin group of type $D_4$~\cite{Sorok1}, and the Artin groups of type $D_n$ for $n\ge 6$~\cite{CasPar1}.
On the other hand, apart from Artin groups of type $A_n$, 
the set of endomorphisms is determined only for Artin groups of type $D_n$ for $n\ge6$~\cite{CasPar1}, and, very recently, for the Artin groups of type $\tilde A_{n}$ for $n\ge 4$~\cite{ParSor1}.

The goal of the present paper is to determine a classification of the endomorphisms of the Artin groups of spherical type $A[B_n]$ for $n\ge 5$, and of its quotients by the center $A[B_n]/Z(A[B_n])$ (see Theorems~\ref{thm2_1} and \ref{thm2_8}), where $B_n$ is the Coxeter graph depicted in Figure~\ref{fig:CoxBn}. 
This paper is a continuation of the article~\cite{ParSor1} by the authors, in which, as a key ingredient, a classification of homomorphisms $A[\tilde A_{n-1}]\longrightarrow A[A_{n}]$ was obtained (see Proposition~4.1 therein). We apply this classification to the study of endomorphisms of $A[B_n]$ by considering a well-known sequence of inclusions: $A[\tilde A_{n-1}]\lhook\joinrel\longrightarrow A[B_{n}]\lhook\joinrel\longrightarrow A[A_{n}]$ (see Section~\ref{sec2}). Thus an arbitrary endomorphism $\varphi\colon A[B_n]\longrightarrow A[B_n]$ gives rise to a composition homomorphism
\[
A[\tilde A_{n-1}]\lhook\joinrel\longrightarrow A[B_{n}]\xrightarrow{\,\,\,\,\varphi\,\,\,\,} A[B_n]\lhook\joinrel\longrightarrow A[A_{n}],
\]
which can be analyzed using results from~\cite{ParSor1}. The techniques we use are mostly algebraic, with an occasional involvement of combinatorics of essential reduction systems of curves for the mapping classes of homeomorphisms of the punctured disk.

\begin{figure}[ht!]
\begin{center}
\begin{tikzpicture}[scale=0.7]
\begin{scope}[scale=1.33] 
\fill (0,0) circle (2.3pt) node [below=2pt,blue] {\scriptsize$1$};
\fill (1,0) circle (2.3pt) node [below=2pt,blue] {\scriptsize$2$};
\fill (2,0) circle (2.3pt) node [below=2pt,blue] {\scriptsize$3$};
\fill (4,0) circle (2.3pt) node [below=1pt,blue] {\scriptsize $n{-}1$};
\fill (5,0) circle (2.3pt) node [below=2pt,blue] {\scriptsize $n$};
\draw [very thick] (0,0)--(1,0)--(2,0)--(2.5,0);
\draw [very thick,dashed] (2.5,0)--(3.5,0);
\draw [very thick] (3.5,0)--(4,0)--(5,0);
\draw (4.5,0.3) node {\small $4$};
\draw (-1,0) node {$B_n$:};	
\end{scope}
\end{tikzpicture}
\end{center}
\caption{The Coxeter graph of type $B_n$ ($n\ge2$)\label{fig:CoxBn}}
\end{figure}
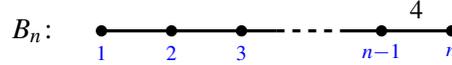

The paper is organized as follows.
In Section~\ref{sec2} we give definitions and precise statements of our results.
Section~\ref{sec3} contains preliminaries, Section~\ref{sec4} contains the proofs related to the endomorphisms of $A[B_n]$ and Section~\ref{sec5} contains proofs related to endomorphisms of $A[B_n]/Z(A[B_n])$. Section~\ref{sec_rem} contains some additional remarks, and in Section~\ref{sec_oq} we summarize the questions related to classification of automorphisms and endomorphisms of Artin groups of spherical and affine types which, to the best of our knowledge, remain open.

\begin{acknow}
This work was started at the AIM workshop ``Geometry and topology of Artin groups'' organized by Ruth Charney, Kasia Jankiewicz, and Kevin Schreve in September 2023, and continued at the SLMath workshop ``Hot Topics: Artin Groups and Arrangements -- Topology, Geometry, and Combinatorics'' organized by Christin Bibby, Ruth Charney, 
Giovanni Paolini, and Mario Salvetti in March 2024. We thank the organizers, the AIM and SLMath. We also thank the referee for carefully reading the manuscript and suggesting several valuable improvements.
The second author acknowledges support from the AMS--Simons travel grant.
\end{acknow}


\section{Definitions and statements}\label{sec2}

\subsection{Key definitions}
As was mentioned above, two other Coxeter graphs play an important role in our study: the Coxeter graphs $A_n$ and $\tilde A_{n-1}$, depicted in Figure~\ref{fig2_1}. Our key ingredient is the following sequence of inclusions:
\[
A[\tilde A_{n-1}]\xhookrightarrow{\quad\iota_{\tilde A}\quad} A[B_{n}]\xhookrightarrow{\quad\iota_{B}\quad}  A[A_{n}]\,,
\]
which we are going to define. We denote the standard generators of $A[B_n]$ as $r_1,\dots,r_n$, the standard generators of $A[A_n]$ as $s_1,\dots,s_n$, and the standard generators of $A[\tilde A_{n-1}]$ as $t_0,t_1,\dots,t_{n-1}$.

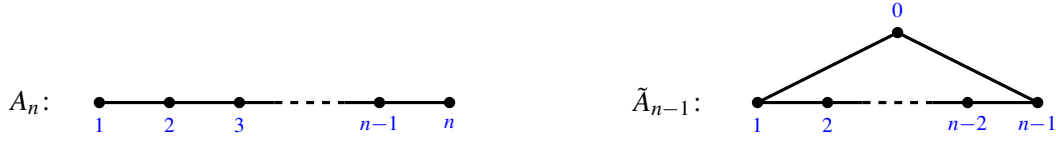
\begin{figure}
\begin{center}
\begin{tikzpicture}[scale=0.7]
\begin{scope}[xshift=-6cm, yshift=0cm, scale=1.33] 
\fill (0,0) circle (2.3pt) node [below=2pt,blue] {\scriptsize$1$};
\fill (1,0) circle (2.3pt) node [below=2pt,blue] {\scriptsize$2$};
\fill (2,0) circle (2.3pt) node [below=2pt,blue] {\scriptsize$3$};
\fill (4,0) circle (2.3pt) node [below=1pt,blue] {\scriptsize$n{-}1$};
\fill (5,0) circle (2.3pt) node [below=2pt,blue] {\scriptsize$n$};
\draw [very thick] (0,0)--(1,0)--(2,0)--(2.5,0);
\draw [very thick,dashed] (2.5,0)--(3.5,0);
\draw [very thick] (3.5,0)--(4,0)--(5,0);
\draw (-1,0) node {$A_n$:};	
\end{scope}
\begin{scope}[xshift=6.5cm, scale=1.33] 
\fill (0,0) circle (2.3pt) node [below=2pt,blue] {\scriptsize$1$}; 
\fill (1,0) circle (2.3pt) node [below=2pt,blue] {\scriptsize$2$}; 
\fill (3,0) circle (2.3pt) node [below=1pt,blue] {\scriptsize$n{-}2$}; 
\fill (4,0) circle (2.3pt) node [below=1pt,blue] {\scriptsize$n{-}1$}; 
\fill (2,1) circle (2.3pt) node [above=2pt,blue] {\scriptsize$0$}; 
\fill [white] (2,2) circle (2.4pt);
\draw [very thick](0,0)--(1,0)--(1.5,0);
\draw [very thick,dashed] (1.5,0)--(2.5,0);
\draw [very thick](2.5,0)--(3,0)--(4,0);
\draw [very thick](0,0)--(2,1)--(4,0);
\draw (-1.3,0) node {$\tilde A_{n-1}$:};	
\end{scope}
\end{tikzpicture}
\end{center}
\caption{Coxeter graphs $A_n$ ($n\ge1$), and $\tilde A_{n-1}$ ($n\ge3$)\label{fig2_1}}
\end{figure}

Define $\rho_B\in A[B_n]$ as
\[
\rho_B = r_1\, \ldots\, r_{n-1}\, r_{n}.
\]
A direct calculation shows that $\rho_B\, r_i\, \rho_B^{-1} = r_{i+1}$ for all $1\le i\le n-2$ and $\rho_B^2\,r_{n-1}\,\rho_B^{-2} = r_1$.
Thus if we set $r_0=\rho_B\, r_{n-1}\,\rho_B^{-1}$, then $\rho_B\,r_i\,\rho_B^{-1}=r_{i+1}$ for all $0\le i\le n-2$ and $\rho_B\,r_{n-1}\rho_B^{-1}=r_0$. (We note, however, that $r_{n}$ and $r_0$ are two different elements of $A[B_{n}]$, since their images in the abelianization of $A[B_{n}]$ are distinct.) It turns out (see~\cite{KenPei1,ChaCri1}) that the subgroup $\langle r_0,\dots,r_{n-1}\rangle$ of $A[B_n]$ is isomorphic to $A[\tilde A_{n-1}]$, and there is a decomposition of $A[B_n]$ into a semidirect product:
\[
A[B_n]\simeq A[\tilde A_{n-1}]\rtimes\Z=\langle r_0,\dots,r_{n-1}\rangle\rtimes\langle\rho_B\rangle,
\]
such that conjugation by $\rho_B$ induces the cyclic shift on generators $r_0,\dots,r_{n-1}$. We define $\iota_{\tilde A}$ as:
\[
\iota_{\tilde A}\colon A[\tilde A_{n-1}] \lhook\joinrel\longrightarrow A[B_{n}], \quad t_i\longmapsto r_i,\text{\quad for all\quad}
0\le i\le n-1.
\]
From now on we identify $A[\tilde A_{n-1}]$ with the image of $\iota_{\tilde A}$.
So, $A[\tilde A_{n-1}]$ is viewed as the subgroup of $A[B_{n}]$ generated by $r_0, r_1, \dots, r_{n-1}$, and we have $t_i = r_i$ for all $0 \le i \le n-1$ via this identification.

We define the second embedding $\iota_B\colon A[B_n]\lhook\joinrel\longrightarrow A[A_{n}]$ as:
\[
\iota_{B}\colon A[B_n] \lhook\joinrel\longrightarrow A[A_{n}], \quad r_i\longmapsto s_i,\text{\quad for all\quad}
1\le i\le n-1,\quad r_{n}\longmapsto (s_n)^2.
\]
The existence of an embedding $A[B_n]\lhook\joinrel\longrightarrow A[A_{n}]$ was observed independently by several authors (see~\cite{Bries1,Lambr1,Crisp2}). A simple combinatorial proof that $\iota_B$ is an embedding is given in~\cite[Proposition~1]{Manfr1}.

From now on we identify $A[B_{n}]$ with the image of $\iota_B$.
So, $A[B_{n}]$ is viewed as the subgroup of $A[A_{n}]$ generated by $s_1,\dots,s_{n-1},(s_{n})^2$, and we have $r_i=s_i$ for all $1\le i\le n-1$ and $r_{n}=(s_{n})^2$ via this identification.

The Coxeter graphs $B_n$ and $A_n$ are of spherical type, while the Coxeter graph $\tilde A_{n}$ is not of spherical type (it is of \emph{affine} type). We know that the center of $A[\tilde A_{n}]$ is trivial (see~\cite[Proposition 1.3]{ChaPei1}), whereas the centers of $A[A_n]$ and $A[B_n]$ are infinite cyclic. The center $Z(A[A_n])$ of $A[A_n]$ is generated by $\Delta^2$, where $\Delta$ 
is the so-called Garside element of $A[A_n]$ equal to:
\[
\Delta=(s_1s_2\ldots s_n)\,(s_1s_2\ldots s_{n-1})\,\ldots\, (s_1s_2)\,s_1\,,
\]
see~\cite[Theorem~1.24]{KasTur1}. We have $\Delta s_i \Delta^{-1} = s_{n+1-i}$ for all $1\le i\le n$. Similarly, the center $Z(A[B_n])$ of $A[B_n]$ is generated by the Garside element $\Delta_B$ 
of $A[B_n]$, which is equal to:
\[
\Delta_B=(r_1\ldots r_{n-1}r_n)^n=\rho_B^n\,,
\]
see~\cite[Satz~7.2]{BriSai1} and \cite[Chapter\,VI,\,Section~4,\,n$^\circ5$,\,(III)]{Bourb1}. Moreover, under the identification via the embedding $\iota_B$, we have $\Delta_B=\Delta^2$, so that $A[B_n]$ and $A[A_n]$ share the same center, see~\cite[Lemma~4.2]{ParSor1}.

Let, as before, $\Gamma$ be a Coxeter graph with the vertex set $S$. For $X\subset S$ we denote by $\Gamma_X$ the full subgraph of $\Gamma$ spanned by $X$, by $A_X[\Gamma]$ the subgroup of $A[\Gamma]$ generated by $X$, and by $W_X[\Gamma]$ the subgroup of $W[\Gamma]$ generated by $X$. We know by~\cite{Lek1} that $A_X[\Gamma]$ is naturally isomorphic to $A[\Gamma_X]$, and we know by~\cite[Chapter~4,\,Section~1.8,\,Theorem~2(i)]{Bourb1} that $W_X[\Gamma]$ is naturally isomorphic to $ W[\Gamma_X]$.

Denote $Y=\{t_1,t_2,\dots,t_{n-1}\}\subset A[\tilde A_{n-1}]$. Notice that the full subgraph of $\tilde A_{n-1}$ spanned by $Y$ is isomorphic to $A_{n-1}$. We denote by $\Delta_Y=\Delta_Y[\tilde A_{n-1}]$ the Garside element of $A_Y[\tilde A_{n-1}]$. In particular,
\[
\Delta_Y=(t_1t_2\ldots t_{n-1})\,(t_1t_2\ldots t_{n-2})\,\ldots\, (t_1t_2)\,t_1\,,
\]
and $\Delta_Y^2$ generates the center of $A_Y[\tilde A_{n-1}]$.
Let
\[
\rho_0=t_1\,t_2\,\ldots\, t_{n-1},\qquad \rho_1=t_1^{-1}\,t_2^{-1}\,\ldots\, t_{n-1}^{-1}.
\]
Clearly $\rho_0,\rho_1\in A_Y[\tilde A_{n-1}]$. A direct calculation shows that $\rho_0\, t_i\,\rho_0^{-1}=\rho_1\,t_i\,\rho_1^{-1}=t_{i+1}$ for all $1\le i\le n-2$ and $\rho_0^2\, t_{n-1}\,\rho_0^{-2}=\rho_1^2\,t_{n-1}\,\rho_1^{-2}=t_1$. So if we define
\[
v_0=\rho_0\, t_{n-1}\,\rho_0^{-1},\qquad v_1=\rho_1\,t_{n-1}\,\rho_1^{-1},
\]
then $\rho_k\, t_{n-1}\,\rho_k^{-1}=v_k$ and $\rho_k\, v_k\,\rho_k^{-1}=t_1$, for $k=0,1$. Figure~\ref{fig_v01} depicts the standard generators $t_1,\dots,t_{n-1}$ and elements $v_0$ and $v_1$ interpreted as braids on $n+1$ strands (viewed as elements of $A[A_n]$ via the embedding $\iota_B\circ\iota_{\tilde A}$). 

\begin{figure}[ht!]
\begin{center}
\includegraphics[width=1.6cm]{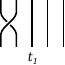}\hskip 1truecm
\includegraphics[width=1.6cm]{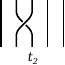}\hskip 1truecm
\includegraphics[width=1.6cm]{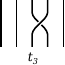}\hskip 1truecm
\includegraphics[width=1.6cm]{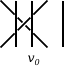}\hskip 1truecm
\includegraphics[width=1.6cm]{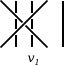}
\caption{Braid pictures of the standard generators $t_1,\dots,t_{n-1}$ and elements $v_0$ and $v_1$, depicted for $n=4$.\label{fig_v01}}
\end{center}
\end{figure}

Also, we denote by $\delta\in A[B_n]$ the element (recall that we identify $t_i$ with $r_i$ for $0\le i\le n-1$):
\[
\delta=(\rho_1)^{-1}\rho_0=r_{n-1}r_{n-2}\dots r_2\,(r_1^2)\,r_2\dots r_{n-2}r_{n-1},
\]
which will be used in the description of endomorphisms and automorphisms of $A[B_n]$ written in the standard generators in Corollaries~\ref{cor2_3} and \ref{cor2_6}. We note also that the word representing element $\delta$ appears in some other contexts in the literature, see Remark~\ref{rem:delta} in Section~\ref{sec_rem}.

If $G$ is a group and $g\in G$, then we denote by $\conj_g\colon G\to G$, $h \mapsto g h g^{-1}$, the conjugation by $g$.
We have a homomorphism $\conj\colon G\to\Aut(G)$, $g\mapsto\conj_g$, whose image is the group $\Inn(G)$ of \emph{inner automorphisms}, and whose kernel is the center of $G$.
The group $\Inn(G)$ is a normal subgroup of $\Aut(G)$, and the quotient $\Out(G)=\Aut(G)/\Inn(G)$ is the \emph{outer automorphism group} of $G$.
Two homomorphisms $\varphi_1,\varphi_2\colon G\to H$ are said to be \emph{conjugate} if there exists $h\in H$ such that $\varphi_2=\conj_h\circ\varphi_1$. Our classification of endomorphisms of $A[B_n]$ and $A[B_n]/Z(A[B_n])$ will be made up to conjugation.

\subsection{Results on endomorphisms of \texorpdfstring{$A[B_{n}]$}{A[Bn]}}
Now we are ready to state our results on endomorphisms of $A[B_n]$. Their proofs will be given in Section~\ref{sec4}.

\begin{thm}\label{thm2_1}
Let $n\ge5$ and $\varphi\colon A[B_{n}]\longrightarrow A[B_{n}]$ be an endomorphism. Then, up to conjugation, $\varphi$ belongs to one of the following three types, written in generators $t_0,\dots,t_{n-1},\rho_B$ as follows:
\begin{itemize}
\item[(1)] There exist $g,h\in A[B_{n}]$ such that $gh=hg$ and for all $0\le i\le n-1$:
\begin{align*}
\varphi(t_i)&=g,\\
\varphi(\rho_B)&=h.
\end{align*}
\item[(2)] There exist $\varepsilon\in\{\pm1\}$ and $p,q\in\Z$ such that for all $0\le i\le n-1$: 
\[
\text{(2a):\qquad}
\begin{aligned}
\varphi(t_i)&=t_i^\varepsilon\,\Delta_B^{p}\,,\\
\varphi(\rho_B)&=\rho_B\,\Delta_B^{q}\,,
\end{aligned}
\qquad\qquad \text{or} \qquad\qquad \text{(2b):\qquad}
\begin{aligned}
\varphi(t_i)&=t_{n-i}^\varepsilon\,\Delta_B^{p}\,,\\
\varphi(\rho_B)&=\rho_B^{-1}\,\Delta_B^{q}.
\end{aligned}
\]
(Note that the indices are taken modulo $n$ so that $t_n=t_0$.)
\item[(3)] There exist $\varepsilon\in\{\pm1\}$, $k\in\{0,1\}$ and $p,q,r,s\in\Z$ such that for all $1\le i\le n-1$:
\[
\begin{aligned}
\varphi(t_i)&=t_i^\varepsilon\,\Delta_Y^{2p}\Delta_B^q\,,\\
\varphi(t_0)&=v_k^\varepsilon\,\Delta_Y^{2p}\Delta_B^q\,,\\
\varphi(\rho_B)&=\rho_k\,\Delta_Y^{2r}\Delta_B^s\,.
\end{aligned}
\]
\end{itemize}
\end{thm}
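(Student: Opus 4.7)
The plan is to leverage Proposition~4.1 of \cite{ParSor1}, which classifies homomorphisms $A[\tilde A_{n-1}]\to A[A_n]$, via the chain of inclusions $A[\tilde A_{n-1}]\xhookrightarrow{\iota_{\tilde A}} A[B_n]\xhookrightarrow{\iota_B} A[A_n]$. Given an endomorphism $\varphi\colon A[B_n]\to A[B_n]$, I would form the composition
\[
\Phi := \iota_B\circ\varphi\circ\iota_{\tilde A}\colon A[\tilde A_{n-1}]\longrightarrow A[A_n].
\]
By that proposition, for $n\ge 5$ the homomorphism $\Phi$ falls, up to conjugation in $A[A_n]$, into one of three canonical forms: an \emph{abelian} form (all $t_i$ mapped to a common element), a \emph{standard} form ($t_i\mapsto t_i^{\pm1}$ or $t_i\mapsto t_{n-i}^{\pm1}$ modulo central factors), or a \emph{dual} form ($t_0\mapsto v_k^{\pm1}$ and $t_i\mapsto t_i^{\pm1}$ for $1\le i\le n-1$ modulo central factors). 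A preliminary step is to verify that the conjugator may be chosen inside $A[B_n]$, so that after replacing $\varphi$ by an $A[B_n]$-conjugate, $\varphi|_{A[\tilde A_{n-1}]}$ is already in one of these normal forms.

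The remaining task in each case is to identify $\varphi(\rho_B)$ using the relations $\rho_B\,t_i\,\rho_B^{-1}=t_{i+1}$ (indices mod~$n$), which translate to
\[
\varphi(\rho_B)\,\varphi(t_i)\,\varphi(\rho_B)^{-1}=\varphi(t_{i+1}),\qquad 0\le i\le n-1.
\]
In the abelian form both sides equal $g$, so $\varphi(\rho_B)=h$ is only required to commute with $g$, producing type~(1). In the standard form with $\varphi(t_i)=t_i^{\varepsilon}\Delta_B^p$, the central factor cancels and one needs $\varphi(\rho_B)$ to conjugate $t_i^\varepsilon$ to $t_{i+1}^\varepsilon$ for every $i$; equivalently, $\rho_B^{-1}\varphi(\rho_B)$ centralizes $A[\tilde A_{n-1}]$ inside $A[B_n]$. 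Using $A[B_n]\cong A[\tilde A_{n-1}]\rtimes\langle\rho_B\rangle$ together with the triviality of $Z(A[\tilde A_{n-1}])$, this centralizer reduces to $Z(A[B_n])=\langle\Delta_B\rangle$, yielding type~(2a); the flipped case is handled symmetrically and gives type~(2b). In the dual form the image of $\varphi|_{A[\tilde A_{n-1}]}$ lies in the parabolic $A_Y[\tilde A_{n-1}]$ and the required cyclic shift $\varphi(t_i)\mapsto\varphi(t_{i+1})$ is realized by $\rho_k$; hence $\rho_k^{-1}\varphi(\rho_B)$ centralizes $A_Y[\tilde A_{n-1}]$ inside $A[B_n]$, a centralizer which equals $\langle\Delta_Y^2,\Delta_B\rangle$ and delivers type~(3). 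The central factors $\Delta_Y^{2p}\Delta_B^q$ on the $\varphi(t_i)$ themselves come directly from the output of Proposition~4.1 of \cite{ParSor1}.

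The hard part will be the centralizer computation in case~(3): one must pin down $Z_{A[B_n]}(A_Y[\tilde A_{n-1}])$ exactly and rule out extra elements slipping in through the $\rho_B$-coset direction. This would combine Godelle-type parabolic-centralizer results for spherical Artin groups, applied in $A[A_n]$ via $\iota_B$, with the semidirect-product decomposition of $A[B_n]$. A secondary technical point is to check, case by case, that the \emph{a priori} conjugator in $A[A_n]$ coming from Proposition~4.1 can be promoted to an element of $A[B_n]$; this is straightforward in cases~(1) and~(2) and in case~(3) relies on $\rho_0,\rho_1\in A[B_n]$. Finally, the $\varphi(\rho_B)$ produced by the analysis automatically lies in $A[B_n]$ since $\rho_B$, $\rho_0$, $\rho_1$, $\Delta_Y^2$ and $\Delta_B$ all do.
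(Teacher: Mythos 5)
Your overall strategy coincides with the paper's: form $\iota_B\circ\varphi\circ\iota_{\tilde A}$, apply Proposition~4.1 of \cite{ParSor1}, promote the conjugator to $A[B_n]$, and then pin down $\varphi(\rho_B)$ from the shift relations $\rho_B t_i\rho_B^{-1}=t_{i+1}$. Cases (1) and (2) are handled essentially as in the paper (the centralizer statement $Z_{A[B_n]}(A[\tilde A_{n-1}])=Z(A[B_n])$ is exactly the paper's Lemma~\ref{lem4_1}, proved via the semidirect decomposition and triviality of $Z(A[\tilde A_{n-1}])$, as you indicate).

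There is, however, a genuine gap in your case (3). From the shift relations you obtain that $h:=\varphi(\rho_B)\rho_k^{-1}$ centralizes each \emph{image} $\varphi(t_i)=t_i^{\varepsilon}\Delta_Y^{2p}\Delta_B^{q}$, i.e.\ (after cancelling the central $\Delta_B^q$) that $h$ commutes with $t_i^{\varepsilon}\Delta_Y^{2p}$ for $1\le i\le n-1$. Your proposal jumps from this to ``$h$ centralizes $A_Y[\tilde A_{n-1}]$,'' but when $p\neq 0$ the factor $\Delta_Y^{2p}$ is \emph{not} central in $A[B_n]$, so commuting with the products $t_i^\varepsilon\Delta_Y^{2p}$ does not formally imply commuting with the $t_i$ themselves; the subgroup generated by the images is not $A_Y[\tilde A_{n-1}]$. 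This is precisely where the paper invokes the mapping class group machinery: it computes the essential reduction systems $\SS(t_i^{2\varepsilon}\Delta_Y^{4p})=\{[\hat a_i],[d]\}$ via Birman--Lubotzky--McCarthy (Theorem~\ref{thm3_3} and Proposition~\ref{prop3_4}), deduces that $h$ fixes the isotopy classes $[\hat a_i]$ and $[d]$, and then uses a local argument in the twice-punctured disks $\D_i$ to conclude that $h$ commutes with each $t_i$. Only after that step does the parabolic-centralizer result (\cite[Theorem~1.1]{Paris2}) apply to give $h\in\langle\Delta_Y^2,\Delta_B\rangle$. You correctly identify the centralizer computation as the target, but the missing idea is how to get $h$ \emph{into} that centralizer in the first place; some argument of this topological (or equivalent) nature is needed and is not supplied by your outline. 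A secondary, smaller omission: promoting the conjugator $g$ from $A[A_n]$ to $A[B_n]$ in cases (2) and (3) is itself a topological argument (showing $g$ fixes the puncture $p_n$), not a consequence of $\rho_0,\rho_1\in A[B_n]$, though you do at least flag it as a point to verify.
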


Except for endomorphisms of type (1), where the pair of commuting elements $(g,h)$ is determined only up to simultaneous conjugation in $A[B_n]$, there is no redundancy in Theorem~\ref{thm2_1}, as the following proposition shows.
\begin{prop}\label{prop2_1}
Let $n\ge5$ and $\varphi$ and $\psi$ be two endomorphisms of $A[B_n]$, such that for some $x\in A[B_n]$ we have $\varphi=\conj_x\circ \psi$. Then $\varphi$ and $\psi$ belong to the same type (1), (2a), (2b), or (3) of Theorem~\ref{thm2_1}, and parameters $\varepsilon\in\{\pm1\}$, $k\in\{0,1\}$, $p,q,r,s\in\Z$ are the same for $\varphi$ and $\psi$.
\end{prop}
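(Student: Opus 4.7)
The plan is to exploit the fact that inner automorphisms act trivially on the abelianization, so the condition $\varphi = \conj_x \circ \psi$ forces $\varphi_{\mathrm{ab}} = \psi_{\mathrm{ab}}$, making $\varphi_{\mathrm{ab}}$ a conjugation invariant. From the presentation of $A[B_n]$ one computes $A[B_n]^{\mathrm{ab}} \simeq \Z^2$ with basis $\bar r_1, \bar r_n$, and in this basis $\bar t_i = \bar v_k = \bar r_1$ for all indices, $\bar \rho_B = (n-1)\bar r_1 + \bar r_n$, $\bar \rho_k = (-1)^k (n-1)\bar r_1$, $\bar \Delta_B = n(n-1)\bar r_1 + n\bar r_n$, and the abelianized class of $\Delta_Y^2$ equals $n(n-1)\bar r_1$.

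Next, I would separate type (1) from the rest. Type (1) is characterized on the nose by the equality $\varphi(t_0) = \varphi(t_1) = \cdots = \varphi(t_{n-1})$, a property obviously preserved under composition with $\conj_x$. In types (2a), (2b), and (3) this equality fails, since the generators $t_1, \ldots, t_{n-1}$ and the elements $v_0, v_1$ are pairwise distinct in $A[B_n]$ (as can be checked via the embedding $\iota_B$ into $A[A_n]$ and the braid pictures in Figure~\ref{fig_v01}). Hence if $\varphi$ is of type (1), so is $\psi$, and the pair $(g,h) = (\varphi(t_0), \varphi(\rho_B))$ is carried by the conjugation into $(xgx^{-1}, xhx^{-1})$—the simultaneous-conjugation ambiguity explicitly allowed by the statement.

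For $\varphi$ and $\psi$ in types (2) or (3), I would write $\varphi_{\mathrm{ab}}(\bar r_1) = a\bar r_1 + b\bar r_n$ and $\varphi_{\mathrm{ab}}(\bar r_n) = c\bar r_1 + d\bar r_n$, and verify directly from the defining formulas that $d \equiv 1 \pmod{n}$ in type (2a), $d \equiv -1 \pmod{n}$ in type (2b), and $d \equiv 0 \pmod{n}$ in type (3). Since $n \ge 5$, these three residues are pairwise distinct, so $\varphi_{\mathrm{ab}}$ alone detects the type. Within each type, the parameters are then recovered from $(a,b,c,d)$ by elementary linear arithmetic; for instance, in type (2a) one reads $p$ from $b = pn$, then $\varepsilon \in \{\pm 1\}$ from $a - pn(n-1)$, then $q$ from $d = 1 + n(q - p(n-1))$. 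In type (3) the extra parameter $k$ is recovered by noting that the $\bar r_1$-component of $\varphi_{\mathrm{ab}}(\bar \rho_B)$ reduced modulo $n$ equals $(-1)^{k+1}$, after which $p, q, r, s$ follow from the remaining linear equations.

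The main obstacle is really only the bookkeeping in the final step: one must check that the small residues used to separate the types and to pin down $k$ really are distinct for $n \ge 5$, and that the stated divisibility identities come out cleanly. Since no structural input beyond the canonical forms of Theorem~\ref{thm2_1} is required, the argument reduces to a careful enumeration of these routine checks.
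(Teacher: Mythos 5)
Your argument is correct and follows essentially the same route as the paper: both proofs kill the conjugation by composing with a homomorphism onto an abelian quotient and then read off the type and the parameters from the resulting linear data. The paper's $\eta\colon A[B_n]\to\Z^2$, $t_i\mapsto(1,0)$, $\rho_B\mapsto(0,1)$ is exactly your abelianization written in the basis $(\bar t_1,\bar\rho_B)$ instead of $(\bar r_1,\bar r_n)$, and your residue computations ($d\equiv 1,-1,0 \pmod n$ for types (2a), (2b), (3), and $(-1)^{k+1}\pmod n$ detecting $k$) all check out for $n\ge5$. The one point where you genuinely diverge is the separation of type (1) from the rest: the paper invokes the injectivity/abelian-image dichotomy of Corollary~\ref{cor2_4} (whose non-injectivity claim for type (3) requires mapping-class-group input), whereas your observation that among the normal forms of Theorem~\ref{thm2_1} only type (1) satisfies the manifestly conjugation-invariant condition $\varphi(t_0)=\varphi(t_1)=\dots=\varphi(t_{n-1})$ is more elementary and keeps the proof self-contained, at the mild cost of having to know that $t_1,\dots,t_{n-1},v_0,v_1$ are pairwise distinct (which follows from their realization as half-twists about non-isotopic arcs).
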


We also give a description of endomorphisms of $A[B_n]$ in the standard generators.

\begin{corl}\label{cor2_3}
In the standard generators $r_1,\dots,r_{n}$ of $A[B_{n}]$ any endomorphism $\varphi\colon A[B_{n}]\longrightarrow A[B_{n}]$, viewed up to conjugation in $A[B_{n}]$, can be written as follows:
\begin{itemize}
\item[(I)] There exist $g,h\in A[B_{n}]$ such that $gh=hg$ and for all $1\le i\le n-1$:
\begin{align*}
\varphi(r_i)&=g,\\ 
\varphi(r_{n})&=h.
\end{align*}
\item[(II)] There exist $\varepsilon\in\{\pm1\}$ and $p,q\in\Z$ such that for all $1\le i\le n-1$:
\[
\text{(IIa):\qquad}
\begin{aligned}
\varphi(r_i)&=r_i^\varepsilon\,\Delta_B^{p}\\
\varphi(r_{n})&=r_{n}^\varepsilon\,\Delta_B^{q}
\end{aligned}
\qquad\qquad \text{or} \qquad\qquad \text{(IIb):\qquad}
\begin{aligned}
\varphi(r_i)&=r_i^\varepsilon\,\Delta_B^{p}\\
\varphi(r_{n})&=\delta^{-\varepsilon}\,r_{n}^{-\varepsilon}\,\Delta_B^{q}.
\end{aligned}
\]
\item[(III)] There exist $\varepsilon\in\{\pm1\}$ and $p,q,r,s\in\Z$ such that for all $1\le i\le n-1$:
\[
\text{(IIIa):\qquad}
\begin{aligned}
\varphi(r_i)&=r_i^\varepsilon\,\Delta_Y^{2p}\Delta_B^q\\
\varphi(r_{n})&=\Delta_Y^{2r}\Delta_B^s
\end{aligned}
\qquad\qquad \text{or} \qquad\qquad \text{(IIIb):\qquad}
\begin{aligned}
\varphi(r_i)&=r_i^\varepsilon\,\Delta_Y^{2p}\Delta_B^q\\
\varphi(r_{n})&=\delta^{-\varepsilon}\,\Delta_Y^{2r}\Delta_B^s.
\end{aligned}
\]
\end{itemize}
\end{corl}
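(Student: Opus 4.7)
The plan is to deduce the corollary from Theorem~\ref{thm2_1} by translating each of the four types of endomorphism into the standard generators $r_1,\ldots,r_n$. The translation uses the dictionary $t_i=r_i$ for $1\le i\le n-1$ and $\rho_B=r_1r_2\cdots r_n$; in particular $\varphi(r_n)$ is determined by $\varphi(r_1),\ldots,\varphi(r_{n-1})$ and $\varphi(\rho_B)$ via $r_n=r_{n-1}^{-1}\cdots r_1^{-1}\rho_B$. Setting $w=r_1r_2\cdots r_{n-1}$, $\bar w=r_{n-1}r_{n-2}\cdots r_1$, $\tilde w=r_1^{-1}r_2^{-1}\cdots r_{n-1}^{-1}$, the key telescoping identities are $\bar w\cdot w=\delta$, $\bar w\cdot\tilde w=1$ and $w^{-1}\cdot\tilde w=\delta^{-1}$, together with the observations $\rho_0=w$ and $\rho_1=\tilde w$.

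With these identities in hand, cases (1), (2a) and (3) are direct substitutions. Type (1) yields $\varphi(r_n)=g^{-(n-1)}h$, and after the change of parameters $h':=g^{-(n-1)}h$ (which still commutes with $g$) we obtain (I). Type (2a) gives $\varphi(r_n)=w^{-1}\rho_B\Delta_B^{q'}=r_n\Delta_B^{q'}$ when $\varepsilon=+1$, producing (IIa) with $\varepsilon=+1$; and $\varphi(r_n)=\bar w\rho_B\Delta_B^{q'}=\delta r_n\Delta_B^{q'}$ when $\varepsilon=-1$, producing (IIb) with $\varepsilon=-1$. Type (3) splits into four subcases along $(\varepsilon,k)\in\{\pm1\}\times\{0,1\}$; the four identities $w^{-1}\rho_0=1$, $w^{-1}\rho_1=\delta^{-1}$, $\bar w\rho_0=\delta$, $\bar w\rho_1=1$ produce respectively the four subcases of (IIIa) and (IIIb).

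The delicate case is (2b), where $\varphi(r_i)=r_{n-i}^{\varepsilon}\Delta_B^p$ does not match the template of Corollary~\ref{cor2_3} directly. Here I would further conjugate $\varphi$ by $\Delta_Y^{-1}$, the inverse of the Garside element of the parabolic subgroup $\langle r_1,\ldots,r_{n-1}\rangle\cong A[A_{n-1}]$. Since $\Delta_Y r_i \Delta_Y^{-1}=r_{n-i}$ for $1\le i\le n-1$, the conjugated endomorphism $\tilde\varphi=\conj_{\Delta_Y^{-1}}\circ\varphi$ satisfies $\tilde\varphi(r_i)=r_i^{\varepsilon}\Delta_B^p$. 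To compute $\tilde\varphi(r_n)$ I would establish the crucial identity
\[
\Delta_Y\, r_n\, \Delta_Y^{-1} \;=\; w\, r_n\, w^{-1}
\]
(equivalently $\Delta_Y^{-1}\, r_n\, \Delta_Y=\tilde w\, r_n\, \bar w$), proved by passing through the embedding $\iota_B$ and using the factorization $\Delta=\Delta_Y\cdot(s_ns_{n-1}\cdots s_1)$ of the Garside element of $A[A_n]$ together with $\Delta s_i\Delta^{-1}=s_{n+1-i}$. A short computation then yields $\tilde\varphi(r_n)=\delta^{-1}r_n^{-1}\Delta_B^{q'}$ for $\varepsilon=+1$ and $\tilde\varphi(r_n)=r_n^{-1}\Delta_B^{q'}$ for $\varepsilon=-1$, matching (IIb) with $\varepsilon=+1$ and (IIa) with $\varepsilon=-1$ respectively. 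Establishing the identity $\Delta_Y r_n \Delta_Y^{-1}=wr_nw^{-1}$ is the only step that must leave $A[B_n]$ for $A[A_n]$, and I expect it to be the principal obstacle.
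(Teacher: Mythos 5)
Your proposal is correct and follows essentially the same route as the paper: a case-by-case translation of Theorem~\ref{thm2_1} using $r_n=r_{n-1}^{-1}\cdots r_1^{-1}\rho_B$, with case (2b) handled by first conjugating by $\Delta_Y^{-1}$. Your key identity $\Delta_Y\, r_n\,\Delta_Y^{-1}=w\, r_n\, w^{-1}$ is equivalent to the paper's Lemma~\ref{lem4_2} ($\Delta_Y^{-1}\rho_B\Delta_Y=r_n r_{n-1}\cdots r_1$), which the paper proves entirely inside $A[B_n]$ by observing that $\Delta_Y\rho_1=\Delta_{Y_1}$ and $\Delta_Y^{-1}\rho_0=\Delta_{Y_1}^{-1}$ with $\Delta_{Y_1}$ the Garside element of $\langle r_1,\dots,r_{n-2}\rangle$, which commutes with $r_n$ --- so the step you flag as the principal obstacle requires no excursion into $A[A_n]$.
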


\begin{rem}
It will follow from the proof of Theorem~\ref{thm2_1}, that case (IIa) of Corollary~\ref{cor2_3} combines case $\varepsilon=1$ of (2a) and case $\varepsilon=-1$ of (2b) of Theorem~\ref{thm2_1}, whereas case (IIb) combines case $\varepsilon=1$ of (2b) and $\varepsilon=-1$ of (2a). Similarly, case (IIIa) of Corollary~\ref{cor2_3} combines case $\varepsilon=1$, $k=0$ and case $\varepsilon=-1$, $k=1$ of (3), whereas case (IIIb) combines case $\varepsilon=-1$, $k=0$ and case $\varepsilon=1$, $k=1$ of (3) in Theorem~\ref{thm2_1}.
\end{rem}

\begin{corl}\label{cor2_4}
Let $n\ge5$ and $\varphi\colon A[B_{n}]\longrightarrow A[B_{n}]$ be an endomorphism. 
\begin{enumerate}
\item $\varphi$ is injective if and only if $\varphi$ is conjugate to an endomorphism of type (2a) or (2b) of Theorem~\ref{thm2_1}.
\item $\varphi$ is surjective if and only if $\varphi$ is conjugate to an endomorphism of type (2a) or (2b) of Theorem~\ref{thm2_1} with $q=0$.
\end{enumerate}
\end{corl}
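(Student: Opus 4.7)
The plan is to work through the three types of endomorphisms in Theorem~\ref{thm2_1} case by case; since injectivity and surjectivity are invariant under conjugation, it suffices to treat the representative forms.

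For surjectivity I would work at the level of the abelianization $A[B_n]^{\mathrm{ab}}\cong\Z^2$, taking basis $\{[t_0],[\rho_B]\}$ coming from the semidirect product decomposition $A[B_n]\cong A[\tilde A_{n-1}]\rtimes\langle\rho_B\rangle$. Surjectivity of $\varphi$ forces the induced endomorphism of $\Z^2$ to be an automorphism, so its determinant must equal $\pm1$. Using $[\Delta_Y^2]=n(n-1)[t_0]$ and $[\Delta_B]=n[\rho_B]$, a direct computation shows the determinant equals $\varepsilon(1+qn)$ in types (2a) and (2b), while in type (3) it is a multiple of $n$. For $n\ge5$ this forces $q=0$ in types (2a) and (2b) and excludes type (3). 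Type (1) has image inside the abelian subgroup $\langle g,h\rangle$ and so cannot surject onto the non-abelian $A[B_n]$. Conversely, if $q=0$ in type (2a) or (2b), then $\Delta_B=\varphi(\rho_B)^{\pm n}\in\Im\varphi$, so $t_i^\varepsilon=(t_i^\varepsilon\Delta_B^p)\cdot\Delta_B^{-p}\in\Im\varphi$, and the image contains a generating set.

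For injectivity, types (1) and (3) admit explicit nontrivial kernel elements. Type (1) factors through the abelian $\langle g,h\rangle$, so its kernel contains the entire (nontrivial) commutator subgroup of $A[B_n]$. For type (3) I would use the element $[t_0,\rho_0^n]\in A[\tilde A_{n-1}]\subseteq A[B_n]$, invoking the classical braid-group identity $\rho_0^n=\Delta_Y^2$ in $A_Y[\tilde A_{n-1}]\cong A[A_{n-1}]$. This commutator is nontrivial in $A[B_n]$: were $\Delta_Y^2$, which is already central in $A_Y[\tilde A_{n-1}]$, to commute with $t_0$ as well, it would lie in the trivial center of $A[\tilde A_{n-1}]$, contradicting $\Delta_Y^2\neq1$ in $A_Y[\tilde A_{n-1}]$ together with the injectivity of the parabolic inclusion. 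On the other hand, a direct computation gives $\varphi(\rho_0^n)=\Delta_Y^{2\varepsilon+2pn(n-1)}\Delta_B^{qn(n-1)}$, which sits in $Z(A_Y[\tilde A_{n-1}])\cdot Z(A[B_n])$ and so commutes with $\varphi(t_0)\in A_Y[\tilde A_{n-1}]\cdot Z(A[B_n])$; hence $\varphi([t_0,\rho_0^n])=1$.

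The delicate remaining case, injectivity of types (2a) and (2b), I would handle through the same semidirect product decomposition. First, checking the defining relations one shows that the assignments $t_i\mapsto t_i^\varepsilon$, $\rho_B\mapsto\rho_B$ extend to an automorphism $\psi_\varepsilon$, and that $t_i\mapsto t_i\Delta_B^{p'}$, $\rho_B\mapsto\rho_B\Delta_B^q$ extend to an endomorphism $\alpha_{p',q}$ (using centrality of $\Delta_B$). A type (2a) endomorphism then decomposes as $\alpha_{\varepsilon p,q}\circ\psi_\varepsilon$, and type (2b) reduces further to type (2a) after composing with the involution $t_i\mapsto t_{n-i}$, $\rho_B\mapsto\rho_B^{-1}$. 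It remains to check that $\alpha_{p',q}$ is injective. Using $\Delta_B=\rho_B^n$ and the $\Z$-valued abelianization character $\ell\colon A[\tilde A_{n-1}]\to\Z$, one obtains $\alpha_{p',q}(a\rho_B^k)=a\cdot\rho_B^{k(1+nq)+np'\ell(a)}$ for all $a\in A[\tilde A_{n-1}]$, $k\in\Z$; uniqueness of the semidirect-product normal form then forces $a=1$ and $k(1+nq)=0$, and $1+nq\neq0$ for $n\ge5$ yields $k=0$. The main technical obstacle will be the bookkeeping in type (3): one must carefully track $\rho_0^n=\Delta_Y^2$ (when $\varepsilon=1$) versus $\rho_1^n=\Delta_Y^{-2}$ (when $\varepsilon=-1$) in order to confirm the stated form of $\varphi(\rho_0)^n$.
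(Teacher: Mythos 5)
Your proposal is correct, and it reaches the conclusion by a genuinely different route than the paper in two places. For the non-injectivity of type (3), the paper observes that $\varphi(t_0)=\varphi(v_0)$ or $\varphi(t_0)=\varphi(v_1)$ and then invokes the mapping-class-group fact that $t_0$, $v_0$, $v_1$ are half-twists about pairwise non-isotopic arcs, hence distinct; your commutator $[t_0,\rho_0^n]=[t_0,\Delta_Y^2]$ is a purely algebraic substitute (nontrivial because $Z(A[\tilde A_{n-1}])=1$ and $\Delta_Y^2\ne 1$, killed by $\varphi$ because $\varphi(t_0)\in A_Y\cdot Z(A[B_n])$ and $\varphi(\rho_0)^n\in\langle\Delta_Y^2,\Delta_B\rangle$), which avoids any appeal to the topology and is arguably more self-contained. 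For types (2a)/(2b), the paper isolates a general ``transvection lemma'': writing $\varphi=\phi\cdot\lambda$ with $\phi$ an automorphism and $\lambda$ valued in the center, injectivity and surjectivity of $\varphi$ are equivalent to those of $\varphi|_{Z}$, and then everything reduces to $\varphi(\Delta_B)=\Delta_B^{\pm 1+qn}$; your normal-form computation $\alpha_{p',q}(a\rho_B^k)=a\,\rho_B^{k(1+nq)+np'\ell(a)}$ in the semidirect product, together with the abelianization determinant for surjectivity, proves the same thing with slightly more hands-on bookkeeping but no auxiliary lemma. (The paper handles non-surjectivity of type (3) by projecting to $\ov{A[B_n]}$ and noting the image misses $\bar\rho_B$; your determinant-divisible-by-$n$ argument is an equivalent abelianized version.) The only slip is cosmetic: in type (2b) the determinant is $\varepsilon(qn-1)$ rather than $\varepsilon(1+qn)$, but either way $|{\det}|=1$ forces $q=0$ for $n\ge 5$, so the conclusion is unaffected. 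All the auxiliary facts you use ($\rho_0^n=\Delta_Y^2$, $\rho_1^n=\Delta_Y^{-2}$, that $\mu$ and $\tau\mu$ are automorphisms, uniqueness of the normal form in $A[\tilde A_{n-1}]\rtimes\langle\rho_B\rangle$) are correct and available from the setup of the paper.
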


As another corollary of Theorem~\ref{thm2_1}, we recover a description of the automorphism group and the outer automorphism group of $A[B_{n}]$ for $n\ge5$. This result was obtained for $n\ge3$ in~\cite{ChaCri1}, and, independently, in~\cite[Theorem~7]{BelMar2}, by different methods.
\begin{corl}[Charney--Crisp~\cite{ChaCri1}, Bell--Margalit~\cite{BelMar2}]\label{cor2_5}
Let $n\ge5$. Then 
\[
\Aut(A[B_{n}])\simeq(\Inn(A[B_{n}])\rtimes\Z)\rtimes(\Z/2\Z\times\Z/2\Z)\,,
\]
and
\[
\Out(A[B_{n}])\simeq(\Z\rtimes \Z/2\Z)\times \Z/2\Z\simeq D_\infty\times \Z/2\Z\,,
\]
where the $\Z$ factor is generated by the transvection automorphism $T$: 
\[
T\colon\quad
\begin{aligned}
&T(t_i)=t_i\cdot\Delta_B,\quad 0\le i\le n-1,\\
&T(\rho_B)=\rho_B\,,
\end{aligned}
\]
and the two copies of $\Z/2\Z$ are generated by commuting involutions $\tau$ and $\mu$: 
\[
\tau\colon\quad
\begin{aligned}
&\tau(t_i)=t_{n-i}^{-1}\,,\\
&\tau(\rho_B)=\rho_B^{-1}\,,
\end{aligned}
\qquad\qquad
\mu\colon\quad
\begin{aligned}
&\mu(t_i)=t_i^{-1}\,,\\
&\mu(\rho_B)=\rho_B\,,
\end{aligned}
\]
for all $0\le i\le n-1$, i.e.\
\[
\begin{aligned}
\Aut(A[B_{n}])&\simeq\bigl(\Inn(A[B_{n}])\rtimes\langle T\rangle\bigr)\rtimes\bigl(\langle \mu\rangle\times\langle\tau\rangle\bigr)\,,\\
\Out(A[B_{n}])&\simeq\langle T\rangle\rtimes \bigl(\langle \mu\rangle\times\langle\tau\rangle\bigr)\simeq\bigl(\langle T\rangle\rtimes \langle \mu\rangle\bigr)\times\langle\tau\rangle\,,
\end{aligned}
\]
with $D_\infty=\langle T\rangle\rtimes \langle \mu\rangle$ being isomorphic to the infinite dihedral group.
\end{corl}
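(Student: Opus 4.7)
The plan is to deduce both isomorphisms directly from Corollary~\ref{cor2_4} and Proposition~\ref{prop2_1}. First I would check that the formulas defining $T$, $\mu$, $\tau$ yield well-defined endomorphisms, and in fact automorphisms, of $A[B_n]$: using the presentation arising from the semidirect product $A[B_n] = \langle t_0,\dots,t_{n-1}\rangle \rtimes \langle\rho_B\rangle$, whose defining relations are the braid/commutation relations among the $t_i$ together with $\rho_B\,t_i\,\rho_B^{-1} = t_{i+1}$ (indices mod $n$), each of $T,\mu,\tau$ preserves every relation by inspection (using centrality of $\Delta_B$ for $T$). Bijectivity is clear: $T^{-1}$ sends $t_i \mapsto t_i\Delta_B^{-1}$ and fixes $\rho_B$, while $\mu^2 = \tau^2 = \id$.

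Next I would verify the relations of $D_\infty \times \Z/2\Z$, namely $\mu T\mu^{-1} = T^{-1}$, $\tau T\tau^{-1} = T$ and $\mu\tau = \tau\mu$, by short direct computation in $\Aut(A[B_n])$. The key inputs are $\mu(\Delta_B) = \Delta_B$ and $\tau(\Delta_B) = \Delta_B^{-1}$, both of which follow from $\Delta_B = \rho_B^n$ and the definitions of $\mu$ and $\tau$ on $\rho_B$. This already shows $\langle T,\mu,\tau\rangle$ is a quotient of $D_\infty \times \Z/2\Z$, with $\langle T\rangle \rtimes \langle\mu\rangle \cong D_\infty$ and $\tau$ commuting with both $T$ and $\mu$.

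For the main classification, I would use that every automorphism is surjective, so by Corollary~\ref{cor2_4}(2) is conjugate to an endomorphism of type (2a) or (2b) of Theorem~\ref{thm2_1} with $q=0$. A brief bookkeeping shows that the family $\{T^p\mu^a\tau^b : p\in\Z,\ a,b\in\{0,1\}\}$ realizes precisely these endomorphisms, where $b$ selects the type, $a$ records the sign $\varepsilon$, and $p$ records the $\Delta_B$-exponent. Consequently $\Aut(A[B_n]) = \Inn(A[B_n]) \cdot \langle T,\mu,\tau\rangle$. The delicate step, and the place where Proposition~\ref{prop2_1} does the real work, is to prove $\Inn(A[B_n]) \cap \langle T,\mu,\tau\rangle = \{1\}$: if $T^p\mu^a\tau^b = \conj_x$ for some $x$, then this endomorphism is conjugate to $\id$, so it must have the same type and parameters as $\id$ (which has type (2a), $\varepsilon = 1$, $p = 0$), forcing in turn $b = 0$, $a = 0$, and $p = 0$. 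Assembling the pieces then yields both semidirect-product decompositions in the statement, and the isomorphism $\Out(A[B_n]) \simeq D_\infty \times \Z/2\Z$ follows from the relations verified in the previous paragraph.
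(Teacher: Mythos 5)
Your proposal is correct and follows essentially the same route as the paper: use Corollary~\ref{cor2_4} to reduce every automorphism, up to conjugation, to type (2a) or (2b) with $q=0$, identify these with $T^p\mu^a\tau^b$, verify the dihedral relations, and show that the intersection with $\Inn(A[B_{n}])$ is trivial. The only (harmless) difference is that you delegate the trivial-intersection step to Proposition~\ref{prop2_1}, whereas the paper argues it directly with the length homomorphism $\xi\colon A[B_{n}]\to\Z$; since Proposition~\ref{prop2_1} is proved independently of Corollary~\ref{cor2_5}, there is no circularity.
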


We also get a description of automorphisms of $A[B_{n}]$ given in the standard generators in Corollary~\ref{cor2_6} below. The proof of it follows from Corollaries~\ref{cor2_3} and \ref{cor2_5} by direct computation using the relation $r_{n}=r_{n-1}^{-1}\dots r_1^{-1}\cdot\rho_B$.

\begin{corl}\label{cor2_6}
Let $n\ge5$ and $\varphi\colon A[B_{n}]\longrightarrow A[B_{n}]$ be an automorphism. Then $\varphi$ is conjugate to an automorphism written in the standard generators $r_1,\dots,r_{n}$ of $A[B_{n}]$ in one of the following two forms:
\[
\text{(IIa):\qquad}
\begin{aligned}
\varphi(r_i)&=r_i^\varepsilon\cdot\Delta_B^{p}\\
\varphi(r_{n})&=r_{n}^\varepsilon\cdot\Delta_B^{-p(n-1)}
\end{aligned}
\qquad\qquad \text{or} \qquad\qquad \text{(IIb):\qquad}
\begin{aligned}
\varphi(r_i)&=r_i^\varepsilon\cdot\Delta_B^{p}\\
\varphi(r_{n})&=\delta^{-\varepsilon}r_{n}^{-\varepsilon}\cdot\Delta_B^{-p(n-1)},
\end{aligned}
\]
for some $\varepsilon\in\{\pm1\}$ and $p\in\Z$. The splitting generators $T$, $\tau$ and $\mu$ of $\Out(A[B_{n}])$ have the form (for $1\le i\le n-1$):
\[
T\colon\,\,
\begin{aligned}
&T(r_i)=r_i\cdot \Delta_B\,,\\
&T(r_{n})=r_{n}\cdot \Delta_B^{-(n-1)}\,,
\end{aligned}
\qquad
\tau\colon\,\,
\begin{aligned}
&\tau(r_i)=r_{n-i}^{-1}\,,\\
&\tau(r_{n})=r_1\dots r_{n-1}\cdot r_{n}^{-1} \cdot r_{n-1}^{-1} \dots r_1^{-1}\,,
\end{aligned}
\qquad
\mu\colon\,\,
\begin{aligned}
&\mu(r_i)=r_i^{-1}\,,\\
&\mu(r_{n})=\delta\cdot r_{n}\,.
\end{aligned}
\]
\end{corl}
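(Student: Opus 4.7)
The plan is to deduce Corollary~\ref{cor2_6} as a direct computational consequence of Corollary~\ref{cor2_3}, Corollary~\ref{cor2_4}, and the explicit formulas for $T$, $\tau$, $\mu$ given in Corollary~\ref{cor2_5}, using throughout the crucial relation
\[
r_n \;=\; r_{n-1}^{-1}\, r_{n-2}^{-1}\,\cdots\, r_1^{-1}\cdot\rho_B\,,
\]
which comes from inverting $\rho_B=r_1 r_2\cdots r_n$. First, by Corollary~\ref{cor2_4}(2), every automorphism of $A[B_n]$ is, up to conjugation, an endomorphism of type~(2a) or~(2b) of Theorem~\ref{thm2_1} with the parameter $q=0$. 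I would then translate this list into the $(r_1,\dots,r_n)$-presentation by comparing with the form (IIa)/(IIb) of Corollary~\ref{cor2_3}: the only task is to compute the image of $r_n$ in each case and to determine what constraint $q=0$ in Theorem~\ref{thm2_1} imposes on the exponent $q$ appearing in Corollary~\ref{cor2_3}.

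The key computation I would carry out is the following. If $\varphi(r_i)=r_i^{\varepsilon}\Delta_B^{p}$ for $1\le i\le n-1$ and $\varphi(\rho_B)=\rho_B^{\eta}$ for some $\eta\in\{\pm1\}$ (as is the case when $q=0$ in Theorem~\ref{thm2_1}), then, using that $\Delta_B$ is central and inverting $\rho_B$,
\[
\varphi(r_n)\;=\;\varphi(r_{n-1})^{-1}\cdots \varphi(r_1)^{-1}\cdot\varphi(\rho_B)\;=\;\bigl(r_{n-1}^{-\varepsilon}\cdots r_1^{-\varepsilon}\bigr)\cdot\rho_B^{\eta}\cdot\Delta_B^{-(n-1)p}\,.
\]
For the four combinations of $(\varepsilon,\eta)$ the inner braid word $r_{n-1}^{-\varepsilon}\cdots r_1^{-\varepsilon}\cdot\rho_B^{\eta}$ collapses either to $r_n$, $r_n^{-1}$, $\delta\cdot r_n$, or $\delta^{-1}\cdot r_n^{-1}$, where the appearance of $\delta$ comes from the identity
\[
r_{n-1}r_{n-2}\cdots r_1\cdot r_1 r_2\cdots r_{n-1}\;=\;\delta\,,
\]
that follows immediately from the defining expression $\delta=r_{n-1}\cdots r_2\,r_1^{2}\,r_2\cdots r_{n-1}$. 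Matching the four outcomes against the shapes~(IIa) and~(IIb) of Corollary~\ref{cor2_3} yields, in every case, exactly the forms listed in Corollary~\ref{cor2_6} with the relation $q=-p(n-1)$. This also verifies the content of the Remark following Corollary~\ref{cor2_3}: the two signs of $\eta$ merge into one and the same standard form once $\varepsilon$ is allowed to vary.

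Finally, for the explicit formulas for the splitting generators $T$, $\tau$, $\mu$ on $r_n$ it suffices to apply the same substitution trick once to each. For $T$, one gets $T(r_n)=r_{n-1}^{-1}\cdots r_1^{-1}\cdot\rho_B\cdot\Delta_B^{-(n-1)}=r_n\cdot\Delta_B^{-(n-1)}$; for $\mu$, the absence of $\Delta_B$-powers gives $\mu(r_n)=r_{n-1}\cdots r_1\cdot\rho_B=\delta\cdot r_n$; and for $\tau$, since $\tau(r_i)=r_{n-i}^{-1}$ and $\tau(\rho_B)=\rho_B^{-1}$,
\[
\tau(r_n)\;=\;r_1 r_2\cdots r_{n-1}\cdot\rho_B^{-1}\;=\;r_1\cdots r_{n-1}\cdot r_n^{-1}\cdot r_{n-1}^{-1}\cdots r_1^{-1}\,,
\]
which matches the displayed formula. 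I expect no real obstacle: the whole argument reduces to bookkeeping of signs $\varepsilon,\eta$ and one recognition of $\delta$ inside a long product, with the matching against (IIa)/(IIb) being the only place where some care is needed to avoid a sign error.
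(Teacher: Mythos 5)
Your route is the one the paper intends: reduce via Corollary~\ref{cor2_4}(2) to types (2a)/(2b) of Theorem~\ref{thm2_1} with $q=0$, then substitute $r_n=r_{n-1}^{-1}\cdots r_1^{-1}\rho_B$ and match against Corollary~\ref{cor2_3}. Your $\eta=+1$ cases, the bookkeeping $q=-p(n-1)$, and the verifications for $T$, $\tau$, $\mu$ are all correct. However, there is a genuine error in the two cases with $\eta=-1$. A type (2b) map satisfies $\varphi(r_i)=r_{n-i}^{\varepsilon}\Delta_B^{p}$ for $1\le i\le n-1$; to bring it to the form $\varphi'(r_i)=r_i^{\varepsilon}\Delta_B^{p}$ one must first conjugate by $\Delta_Y^{-1}$, and then $\varphi'(\rho_B)=\Delta_Y^{-1}\rho_B^{-1}\Delta_Y=(r_nr_{n-1}\cdots r_1)^{-1}=r_1^{-1}r_2^{-1}\cdots r_n^{-1}$, which is \emph{not} $\rho_B^{-1}=r_n^{-1}\cdots r_1^{-1}$: the elements $r_1\cdots r_n$ and $r_n\cdots r_1$ are conjugate but distinct (their images under $\iota_B$ project to mutually inverse $n$-cycles in $\Sym_{n+1}$). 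In fact no homomorphism can satisfy both $\varphi(r_i)=r_i^{\varepsilon}\Delta_B^{p}$ for $1\le i\le n-1$ and $\varphi(\rho_B)=\rho_B^{-1}$, since applying $\varphi$ to the relation $\rho_B r_i\rho_B^{-1}=r_{i+1}$ would force $r_{i-1}^{\varepsilon}=r_{i+1}^{\varepsilon}$. Consequently your claimed collapses $r_{n-1}^{-1}\cdots r_1^{-1}\cdot\rho_B^{-1}=\delta^{-1}r_n^{-1}$ and $r_{n-1}\cdots r_1\cdot\rho_B^{-1}=r_n^{-1}$ are false as written; they become correct only after replacing $\rho_B^{-1}$ by $r_1^{-1}\cdots r_n^{-1}$.

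That replacement is precisely the content of Lemma~\ref{lem4_2} ($\Delta_Y^{-1}\rho_B\Delta_Y=r_nr_{n-1}\cdots r_1$), which is the one nontrivial identity missing from your argument; with it the $\eta=-1$ cases do yield (IIb) with $\varepsilon=1$ and (IIa) with $\varepsilon=-1$, as required. (Your computation of $\tau(r_n)$ is unaffected, because there you correctly used the actual values $\tau(r_i)=r_{n-i}^{-1}$ and $\tau(\rho_B)=\rho_B^{-1}$ rather than the straightened form.)
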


\begin{rem}
The automorphism $\tau$ looks rather exotic when written in the standard generators, and the reader may wonder how $\tau$ can be presented in one of the forms (IIa) or (IIb) of Corollary~\ref{cor2_6}, up to conjugacy. Observing that $\Delta_Y \,r_i\,\Delta_Y^{-1}=r_{n-i}$ for all $i=1,\dots,n-1$, and taking into account Lemma~\ref{lem4_2} below, we conclude that $\tau=\conj_{\Delta_Y}\circ \tau'$, where $\tau'$ is of type (IIa) with $p=0$ and $\varepsilon=-1$, acting on the standard generators as follows: $\tau'(r_i)=r_i^{-1}$, for all $i=1,\dots,n$.
\end{rem}

\subsection{Results on endomorphisms of \texorpdfstring{$A[B_n]/Z(A[B_n])$}{A[Bn]/Z(A[Bn])}}
Here we state our results on endomorphisms of $A[B_n]/Z(A[B_n])$. Their proofs will be given in Section~\ref{sec5}.

Denote for brevity $\ov{A[B_{n}]}=A[B_{n}]/Z(A[B_{n}])$ and let $\pi\colon A[B_{n}]\to \ov{A[B_{n}]}$ be the canonical projection. For each $1\le i\le n$, we set $\bar r_{i}=\pi(r_i)$, and for each $0\le i\le n-1$, we set $\bar t_i=\pi(t_i)$. Recall that $Z(A[B_{n}])$ is generated by $\rho_B^{n}=\Delta_B$, and hence
\[
\ov{A[B_{n}]}\simeq A[\tilde A_{n-1}]\rtimes \langle \bar\rho_B\rangle, 
\]
where $\bar\rho_B=\pi(\rho_B)$ has order $n$. 

Note that if $\varphi\colon A[B_{n}]\to A[B_{n}]$ has the property $\varphi(Z(A[B_{n}]))\subseteq Z(A[B_{n}])$, it induces a natural endomorphism $\ov{\varphi}\colon \ov{A[B_{n}]}\to \ov{A[B_{n}]}$. 
Let $\phi\colon \ov{A[B_{n}]}\to \ov{A[B_{n}]}$ be an arbitrary endomorphism. If there exists an endomorphism $\varphi\colon A[B_{n}]\to A[B_{n}]$ making the following diagram commutative

\begin{center}
\setlength\mathsurround{0pt}
\begin{tikzcd}
A[B_{n}] \arrow[r,"\varphi"] \arrow[d,"\pi"] & A[B_{n}]\arrow[d,"\pi"]\\
\ov{A[B_{n}]}\arrow[r,"\phi"] & \ov{A[B_{n}]}
\end{tikzcd}
\end{center}

we say that $\phi$ \emph{lifts} and that $\varphi$ is a \emph{lift} of $\phi$. Notice that any lift $\varphi$ of any endomorphism $\phi$ of $\ov{A[B_{n}]}$ must have property $\varphi(Z(A[B_{n}]))\subseteq Z(A[B_{n}])$.

\begin{prop}\label{prop_lifts}
Let $n\ge5$. Then every endomorphism of $\ov{A[B_{n}]}$ lifts.
\end{prop}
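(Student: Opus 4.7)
The plan is to exploit the semidirect product decomposition $A[B_n]\cong A[\tilde A_{n-1}]\rtimes\langle\rho_B\rangle$, which presents $A[B_n]$ on generators $t_0,\ldots,t_{n-1},\rho_B$ subject to the $\tilde A_{n-1}$ relations among the $t_i$ (braid for cyclically adjacent pairs, commuting otherwise) together with the conjugation relations $\rho_B t_i \rho_B^{-1}=t_{i+1}$ (indices mod $n$). Given $\phi\colon\overline{A[B_n]}\to \overline{A[B_n]}$, I would choose arbitrary lifts $g_i\in A[B_n]$ of $\phi(\bar t_i)$ and $h\in A[B_n]$ of $\phi(\bar\rho_B)$. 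Every defining relation, applied to these lifts, then holds only up to an element of $Z(A[B_n])=\langle\Delta_B\rangle$, so there are integers $c_{ij}^{\mathrm{com}}, c_i^{\mathrm{br}}, c_i^{\mathrm{conj}}$ with
\[
[g_i,g_j]=\Delta_B^{c_{ij}^{\mathrm{com}}},\quad g_ig_{i+1}g_i(g_{i+1}g_ig_{i+1})^{-1}=\Delta_B^{c_i^{\mathrm{br}}},\quad hg_ih^{-1}g_{i+1}^{-1}=\Delta_B^{c_i^{\mathrm{conj}}}.
\]
The goal will be to choose integers $a_i$ so that after the shift $g_i\mapsto g_i'=g_i\Delta_B^{a_i}$, all these discrepancies vanish simultaneously; then $\varphi(t_i)=g_i'$, $\varphi(\rho_B)=h$ extends to a well-defined endomorphism of $A[B_n]$ lifting $\phi$.

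The heart of the argument rests on three observations made in the abelianization $A[B_n]^{\mathrm{ab}}\cong\mathbb{Z}^2$, where $\Delta_B$ has non-torsion image $\Delta_B^{\mathrm{ab}}=(n(n-1),n)$. First, commutators are abelianly trivial, so $c_{ij}^{\mathrm{com}}=0$ automatically. Second, passing both the conjugation and the braid relations to the abelianization yields the same equation $g_i^{\mathrm{ab}}-g_{i+1}^{\mathrm{ab}}=c\cdot\Delta_B^{\mathrm{ab}}$, which forces the crucial matching $c_i^{\mathrm{conj}}=c_{i,i+1}^{\mathrm{br}}$. Third, since $\bar\rho_B$ has order $n$ in $\overline{A[B_n]}$, we have $h^n\in Z(A[B_n])$, and iterating the conjugation relation gives $g_0=h^n g_0 h^{-n}=g_0\Delta_B^{\sum_{i=0}^{n-1}c_i^{\mathrm{conj}}}$, whence $\sum c_i^{\mathrm{conj}}=0$.

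With these consistency conditions in place, setting $a_0=0$ and $a_{i+1}=a_i+c_i^{\mathrm{conj}}$ gives integers with $a_n=a_0$, so the shift $g_i\mapsto g_i\Delta_B^{a_i}$ is cyclically consistent, and a direct check (using centrality of $\Delta_B$ and the matching $c_i^{\mathrm{conj}}=c_{i,i+1}^{\mathrm{br}}$) confirms that all conjugation, braid, and commuting relations now hold exactly. The main subtlety requiring care is that a single adjustment of the $g_i$'s must simultaneously kill two a priori different obstructions — the conjugation discrepancies and the braid discrepancies — which is only possible because of the matching $c_i^{\mathrm{conj}}=c_{i,i+1}^{\mathrm{br}}$. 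This matching in turn depends on $\Delta_B^{\mathrm{ab}}$ being non-torsion in $A[B_n]^{\mathrm{ab}}$, which allows the shift coefficients to be read off unambiguously.
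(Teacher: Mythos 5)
Your argument is correct, but it runs through a different presentation of $A[B_n]$ than the paper's proof, and this changes the combinatorics of the correction step. The paper works with the standard Artin generators $r_1,\dots,r_n$: since the braid relations there form a linear chain, the lifts $u_1,\dots,u_{n-1}$ are fixed one at a time by an inductive central correction with no consistency condition to verify, and the remaining relations (the commutations and the single length-four relation with $u_n$) are then shown to hold exactly via the homomorphism $z\colon A[B_n]\to\Z$, $r_i\mapsto 1$, under which $\Delta_B\mapsto n^2\ne0$ --- the same non-torsion principle as your abelianization argument. You instead use the semidirect-product presentation on $t_0,\dots,t_{n-1},\rho_B$ (legitimate, given the decomposition $A[B_n]\simeq A[\tilde A_{n-1}]\rtimes\langle\rho_B\rangle$ recorded in Section~\ref{sec2}), where the braid relations form a cycle; this forces you to verify two compatibilities the paper never needs: the matching $c_i^{\mathrm{conj}}=c_i^{\mathrm{br}}$, which you correctly extract from the abelianization because $\Delta_B^{\mathrm{ab}}=(n(n-1),n)$ is non-torsion in $\Z^2$, and the vanishing of the cyclic sum $\sum_i c_i^{\mathrm{conj}}$, which you correctly extract from the centrality of $h^n$ (valid since $\bar\rho_B^{\,n}=1$ forces $h^n\in\Ker\pi=Z(A[B_n])$). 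Both checks go through, and the telescoping choice $a_{i+1}=a_i+c_i^{\mathrm{conj}}$ then kills the conjugation and braid discrepancies simultaneously while leaving the (automatically vanishing) commutation discrepancies untouched, so the proof is complete. The paper's linear-chain version is slightly leaner; yours has the mild advantage of producing the lift directly in the generators $t_i,\rho_B$ in which Theorem~\ref{thm2_1} is stated.
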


Recall the automorphisms $\tau$ and $\mu$ of $A[B_{n}]$ defined in Corollary~\ref{cor2_5}. Projecting them to $\ov{A[B_{n}]}$, we get automorphisms $\bar\tau$, $\bar\mu$ of $\ov{A[B_{n}]}$:
\[
\bar\tau\colon\quad
\begin{aligned}
&\bar\tau(\bar t_i)=\bar t_{n-i}^{-1}\,,\\
&\bar\tau(\bar\rho_B)=\bar\rho_B^{-1}\,,
\end{aligned}
\qquad\qquad
\bar\mu\colon\quad
\begin{aligned}
&\bar\mu(\bar t_i)=\bar t_i^{-1}\,,\\
&\bar\mu(\bar\rho_B)=\bar\rho_B\,,
\end{aligned}
\]
for all $0\le i\le n-1$.

In what follows, $Z_G(g)=\{x\in G\mid xg=gx\}$ denotes the centralizer of an element $g$ in a group~$G$.
\begin{thm}\label{thm2_8}
Let $n\ge5$ and $\phi\colon \ov{A[B_{n}]}\to \ov{A[B_{n}]}$ be an endomorphism. Then we have one of the following two possibilities up to conjugation:
\begin{enumerate}
\item there exists $\kappa\in\{0,\dots,n-1\}$ such that:
\begin{align*}
\phi(\bar \rho_B)&=\bar\rho_B^\kappa\,,\\
\phi(\bar t_i)&=\bar g\,\quad\text{for all}\quad 0\le i\le n-1.
\end{align*}
with $\bar g\in Z_{\ov{A[B_{n}]}}(\bar\rho_B^\kappa)$. Moreover, denoting $d=\gcd(n,\kappa)$, we have the following description of the centralizer of $\bar\rho_B^\kappa$ in $\ov{A[B_{n}]}$ (with the identification $\bar t_{n}=\bar t_0$):
\[
Z_{\ov{A[B_{n}]}}(\bar\rho_B^\kappa)=
\begin{cases}
\langle \bar\rho_B\rangle,\text{\quad if\, $d=1$,}\\
\langle \bar\rho_B,\,\bar t_{d}\bar t_{2d}\bar t_{3d}\dots\bar t_0\rangle,\text{\quad if\, $d\ne1$.}
\end{cases}
\]
In particular, if $\kappa=0$, then $d=n$ and $Z_{\ov{A[B_{n}]}}(\bar\rho_B^\kappa)=\langle \bar\rho_B,\,\bar t_0\rangle=\ov{A[B_{n}]}$.
\item There exists $\varepsilon\in\{\pm1\}$ such that for all $0\le i\le n-1$:
\[
\text{(2a):\qquad}
\begin{aligned}
\phi(\bar t_i)&=\bar t_i^\varepsilon\,,\\
\phi(\bar \rho_B)&=\bar \rho_B\,,
\end{aligned}
\qquad\qquad \text{or} \qquad\qquad \text{(2b):\qquad}
\begin{aligned}
\phi(\bar t_i)&=\bar t_{n-i}^\varepsilon\,,\\
\phi(\bar \rho_B)&=\bar \rho_B^{-1}\,,
\end{aligned}
\]
i.e.\ $\phi\in\langle\bar\tau,\bar\mu\rangle$.
\end{enumerate}
\end{thm}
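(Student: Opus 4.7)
The plan is to combine Proposition~\ref{prop_lifts} with the classification in Theorem~\ref{thm2_1}. Given $\phi\colon\ov{A[B_n]}\to\ov{A[B_n]}$, first lift $\phi$ to $\varphi\colon A[B_n]\to A[B_n]$; the lift must preserve $Z(A[B_n])=\langle\Delta_B\rangle$. Up to conjugation, $\varphi$ is of Type~(1), (2a), (2b), or (3) of Theorem~\ref{thm2_1}. I will first rule out Type~(3): for such $\varphi$, $\varphi(\rho_B)=\rho_k\Delta_Y^{2r}\Delta_B^s$. Since $\Delta_Y^2$ is central in $A_Y\cong A[A_{n-1}]$, and $\rho_0^n=\Delta_Y^2$, $\rho_1^n=\Delta_Y^{-2}$ (the latter because $\rho_1^{-1}=t_{n-1}\cdots t_1$ is a Coxeter element of $A_Y$, hence $A_Y$-conjugate to $\rho_0$), one computes $\varphi(\Delta_B)=\Delta_Y^{\pm 2+2rn}\,\Delta_B^{sn}$. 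Since $A_Y\cap\langle\Delta_B\rangle=\{1\}$ (visible from the abelianization of $A[B_n]$, which has rank $2$ and separates the two conjugacy classes of standard generators), this lies in $\langle\Delta_B\rangle$ only when $\pm 1+rn=0$, impossible for $n\ge 5$. Hence every lift is of Type~(1), (2a), or (2b).

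Types~(2a) and (2b) project directly to Cases~(2a) and (2b) of Theorem~\ref{thm2_8}, since the $\Delta_B$-factors die in the quotient. For Type~(1), $\varphi(t_i)=g$ and $\varphi(\rho_B)=h$ with $gh=hg$ and $h^n\in\langle\Delta_B\rangle$, giving $\phi(\bar t_i)=\bar g$, $\phi(\bar \rho_B)=\bar h$ with $\bar g\bar h=\bar h\bar g$ and $\bar h^n=1$. To match Case~(1) of Theorem~\ref{thm2_8} I must conjugate further in $\ov{A[B_n]}$ so that $\bar h=\bar\rho_B^\kappa$. Writing $\bar h=(a,\bar\rho_B^\kappa)$ in the semidirect product $A[\tilde A_{n-1}]\rtimes\langle\bar\rho_B\rangle$, the relation $\bar h^n=1$ becomes the norm condition $a\cdot\sigma^\kappa(a)\cdots\sigma^{(n-1)\kappa}(a)=1$ (with $\sigma$ the cyclic shift of generators), and conjugation by $(b,1)$ transforms $a$ into $b\,a\,\sigma^\kappa(b)^{-1}$. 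Showing that $a$ is a $\sigma^\kappa$-coboundary is the key step; I plan to obtain this via the classification of periodic elements in the Garside group $A[B_n]$ (conjugacy classes of elements with a power in $\langle\Delta_B\rangle$ are represented by powers of $\rho_B$), or equivalently via the embedding $\ov{A[B_n]}\hookrightarrow\ov{A[A_n]}$ and the Nielsen realization of finite-order mapping classes of a punctured disk.

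Finally, once $\phi(\bar\rho_B)=\bar\rho_B^\kappa$, an element $(a,\bar\rho_B^\ell)$ commutes with $(1,\bar\rho_B^\kappa)$ iff $\sigma^\kappa(a)=a$; thus the centralizer is generated by $\bar\rho_B$ together with the $\sigma^\kappa$-fixed subgroup of $A[\tilde A_{n-1}]$. With $d=\gcd(n,\kappa)$, $\sigma^\kappa$ has $d$ orbits of size $n/d$ on $\{\bar t_0,\dots,\bar t_{n-1}\}$; when $d>1$, the orbit product $\bar t_d\bar t_{2d}\cdots\bar t_0$ is visibly fixed, and a fixed-subgroup argument for graph-automorphisms of Artin groups (in the spirit of Michel or Crisp--Paris), together with the Garside structure, will show that it generates the full fixed subgroup. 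When $d=1$ the fixed subgroup is trivial, which is visible already in the abelianization. \textbf{The main obstacle} is the conjugacy-class reduction for the torsion element $\bar h$ in the Type~(1) case; the centralizer computation is essentially a fixed-subgroup calculation once the correct auxiliary results are in hand.
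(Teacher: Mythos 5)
Your architecture coincides with the paper's: lift $\phi$ via Proposition~\ref{prop_lifts}, classify the lift by Theorem~\ref{thm2_1}, rule out Type~(3) by showing $\varphi(\rho_B^n)\notin Z(A[B_n])$, project Types~(2a)/(2b) directly, and reduce Type~(1) to $\phi(\bar\rho_B)=\bar\rho_B^\kappa$ plus a centralizer computation. Your Type~(3) exclusion is correct and is essentially the paper's argument. The step you flag as the main obstacle is exactly where the paper invokes one known theorem: working upstairs, $h=\varphi(\rho_B)$ satisfies $h^n=\varphi(\rho_B^n)=\rho_B^{\kappa n}=(\rho_B^\kappa)^n$, and by Lee--Lee's uniqueness of roots up to conjugacy in Artin groups of type $B_n$~\cite{LeeLee1}, $h$ is conjugate to $\rho_B^\kappa$ \emph{in $A[B_n]$}. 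This is precisely your parenthetical claim about periodic elements, so your plan closes once you cite that result; it is cleaner to apply it in $A[B_n]$ than to solve the $\sigma^\kappa$-coboundary equation in the quotient, and if you instead go through Nielsen realization in $\ov{A[A_{n}]}$ you must additionally arrange the conjugator to fix the distinguished puncture in order to land back in $\ov{A[B_n]}$.

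For the centralizer, the paper simply quotes Gonz\'alez-Meneses--Wiest~\cite{GWie1} for $Z_{A[B_n]}(\rho_B^\kappa)$ and pushes it down after checking $\pi(Z_{A[B_{n}]}(x))=Z_{\ov{A[B_{n}]}}(\pi(x))$. Your fixed-subgroup route via Crisp-type symmetric-subgroup theorems is viable, but two of your assertions need repair. First, for $d=1$ the triviality of $\Fix(\sigma^\kappa)$ is \emph{not} visible in the abelianization: all generators of $A[\tilde A_{n-1}]$ are conjugate, so its abelianization is $\Z$ with $\sigma$ acting trivially, and it detects nothing. Second, for $d>1$ the fixed subgroup of $\sigma^\kappa$ is generated by all $d$ orbit products $\bar t_j\bar t_{j+d}\bar t_{j+2d}\cdots$ for $0\le j\le d-1$, not by the single product $\bar t_d\bar t_{2d}\cdots\bar t_0$; the stated form of the centralizer is nevertheless recovered because conjugation by $\bar\rho_B$ permutes these orbit products cyclically, so $\langle\bar\rho_B,\ \bar t_d\bar t_{2d}\cdots\bar t_0\rangle$ contains them all. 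Neither repair is deep, but as written those two sub-claims are incorrect, and the conjugacy reduction remains to be completed by an explicit appeal to~\cite{LeeLee1} or an equivalent periodicity result.
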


Except for the element $\bar g$ in case (1), which can be chosen up to conjugation in $Z_{\ov{A[B_{n}]}}(\bar\rho_B^\kappa)$, there is no other redundancy in Theorem~\ref{thm2_8}, as the following proposition shows.

\begin{prop}\label{prop2_9}
Let $n\ge5$ and $\phi$ and $\psi$ be two endomorphisms of $\ov{A[B_n]}$, such that for some $x\in \ov{A[B_n]}$ we have $\phi=\conj_x\circ \psi$. Then $\phi$ and $\psi$ belong to the same type (1), (2a), or (2b) of Theorem~\ref{thm2_8}, and parameters $\varepsilon\in\{\pm1\}$ and $\kappa\in\{0,\dots,n-1\}$ are the same for $\phi$ and $\psi$.
\end{prop}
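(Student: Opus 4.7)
The plan is to exploit two invariants of an endomorphism of $\overline{A[B_n]}$ that are preserved under post-composition with an inner automorphism: whether its image is abelian, and the induced homomorphism on any characteristic abelian quotient. Indeed, $\Im(\phi)=x\,\Im(\psi)\,x^{-1}$, so $\Im(\phi)$ is abelian if and only if $\Im(\psi)$ is; and whenever $N\trianglelefteq\overline{A[B_n]}$ has abelian quotient, both $\phi$ and $\psi$ induce the same composition $\overline{A[B_n]}\to\overline{A[B_n]}/N$, because conjugation is trivial on an abelian target.

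First I would separate case~(1) from cases~(2a)/(2b) by the abelianness of the image. In case~(1), $\Im(\psi)=\langle\bar g,\bar\rho_B^\kappa\rangle$ is abelian because $\bar g\in Z_{\overline{A[B_n]}}(\bar\rho_B^\kappa)$. In cases (2a) and (2b), $\psi\in\langle\bar\tau,\bar\mu\rangle$ is an automorphism, hence surjective, and $\overline{A[B_n]}$ is non-abelian (e.g., $\bar t_0$ and $\bar t_1$ satisfy a nontrivial braid relation coming from the adjacent edge in $\tilde A_{n-1}$). So $\phi$ and $\psi$ fall into the same broad type.

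Next I would use the projection $\pi_1\colon\overline{A[B_n]}\to\overline{A[B_n]}/A[\tilde A_{n-1}]\simeq\Z/n\Z$ (sending every $\bar t_i$ to $0$ and $\bar\rho_B$ to $1$). Since the target is abelian, $\pi_1\circ\phi=\pi_1\circ\psi$, so $\phi(\bar\rho_B)$ and $\psi(\bar\rho_B)$ agree modulo $A[\tilde A_{n-1}]$. In case (1) this forces $\kappa\equiv\kappa'\pmod n$, and as $\kappa,\kappa'\in\{0,\dots,n-1\}$, equality follows. In cases (2a) and (2b) the images of $\bar\rho_B$ are $\bar\rho_B$ and $\bar\rho_B^{-1}$ respectively, which remain distinct modulo $A[\tilde A_{n-1}]$ since $n\ge5>2$; hence $\phi$ and $\psi$ lie in the same subtype.

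Finally, to pin down $\varepsilon$ in cases (2a)/(2b), I would pass to $H:=\overline{A[B_n]}^{\mathrm{ab}}$. The abelianization of $A[B_n]$ is $\Z\langle a\rangle\oplus\Z\langle b\rangle$, where $a$ is the common class of $r_1,\dots,r_{n-1}$ (forced equal by the odd-labeled edges of $B_n$) and $b=[r_n]$; quotienting by $\Delta_B=\rho_B^n$ imposes the single relation $n\bigl((n-1)a+b\bigr)=0$, giving $H\simeq\Z\oplus\Z/n\Z$ with $a$ of infinite order. Every $\bar t_i$ projects to $a$ in $H$ (for $i\ne 0$ directly, and for $i=0$ because $\bar t_0=\bar\rho_B\bar t_{n-1}\bar\rho_B^{-1}$ becomes $[\bar t_{n-1}]=a$). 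In case (2a) or (2b) the induced map on $H$ sends $a\mapsto\varepsilon a$, and since $\phi$ and $\psi$ induce the same map on $H$, they share the same $\varepsilon$. The one piece that requires care is the abelianization computation, but it is a routine consequence of the presentations recalled in Section~\ref{sec2}.
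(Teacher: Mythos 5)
Your proposal is correct and follows essentially the same route as the paper: separate case (1) from (2a)/(2b) by abelianness of the image (conjugation-invariant), then pin down $\kappa$, the subtype, and $\varepsilon$ by composing with homomorphisms to abelian targets, on which post-composition by $\conj_x$ acts trivially. The paper packages your two quotients (the projection to $\langle\bar\rho_B\rangle\simeq\Z/n\Z$ and the abelianization) into a single map $\bar\eta\colon\ov{A[B_n]}\to\Z\oplus\Z/n\Z$, but the invariants used are identical.
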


As the first consequence of Theorem~\ref{thm2_8} we recover a description of the automorphism group of $\ov{A[B_{n}]}$, which was obtained in~\cite{ChaCri1} for $n\ge3$ by different methods.
\begin{corl}[Charney--Crisp~\cite{ChaCri1}]\label{cor2_10}
Let $n\ge5$. Then
\[
\Aut(\ov{A[B_{n}]})=\Inn(\ov{A[B_{n}]})\rtimes \langle\bar\tau,\bar\mu\rangle\simeq \ov{A[B_{n}]}\rtimes(\Z/2\Z\times\Z/2\Z)\,,
\]
and $\Out(\ov{A[B_{n}]})\simeq \Z/2\Z\times\Z/2\Z$.
\end{corl}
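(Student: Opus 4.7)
My plan is to derive Corollary~\ref{cor2_10} directly from Theorem~\ref{thm2_8} and Proposition~\ref{prop2_9}. The first step is to single out which entries of the classification in Theorem~\ref{thm2_8} are in fact automorphisms. Type~(1) endomorphisms collapse every $\bar t_i$ to a common element $\bar g$, so their kernel contains $\bar t_0\bar t_1^{-1}$; this element is nontrivial in $\ov{A[B_{n}]}$ because $t_0 \ne t_1$ inside the embedded affine Artin subgroup $A[\tilde A_{n-1}] \hookrightarrow \ov{A[B_{n}]}$, so no type~(1) endomorphism is injective. Types~(2a) and (2b), as $\varepsilon$ ranges over $\{\pm 1\}$, give exactly the four maps $\id$ (type 2a, $\varepsilon=1$), $\bar\mu$ (type 2a, $\varepsilon=-1$), $\bar\tau$ (type 2b, $\varepsilon=-1$), and $\bar\tau\bar\mu$ (type 2b, $\varepsilon=1$); a direct check on generators shows $\bar\tau\bar\mu = \bar\mu\bar\tau$, so these form a Klein four-group $\langle\bar\tau,\bar\mu\rangle \simeq \Z/2\Z \times \Z/2\Z$, each member obviously invertible.

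Theorem~\ref{thm2_8} then yields $\Aut(\ov{A[B_{n}]}) = \Inn(\ov{A[B_{n}]}) \cdot \langle\bar\tau,\bar\mu\rangle$. To upgrade to a semidirect product I would invoke Proposition~\ref{prop2_9}: if some $\nu \in \langle\bar\tau,\bar\mu\rangle \setminus \{\id\}$ were inner, say $\nu = \conj_x = \conj_x \circ \id$, then Proposition~\ref{prop2_9} would force $\nu$ and $\id$ to share the same type and parameter $\varepsilon$; but each of $\bar\mu$, $\bar\tau$, $\bar\tau\bar\mu$ differs from $\id$ in one of these, contradiction. Hence $\Inn \cap \langle\bar\tau,\bar\mu\rangle = \{\id\}$, and combined with the normality of $\Inn$ in $\Aut$ this gives $\Aut(\ov{A[B_{n}]}) = \Inn(\ov{A[B_{n}]}) \rtimes \langle\bar\tau,\bar\mu\rangle$.

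It remains to show that $\ov{A[B_{n}]}$ is centerless, so that $\Inn(\ov{A[B_{n}]}) \simeq \ov{A[B_{n}]}$. Using $\ov{A[B_{n}]} \simeq A[\tilde A_{n-1}] \rtimes \langle\bar\rho_B\rangle$, I would write a putative central element as $\bar a\,\bar\rho_B^{\,j}$ with $\bar a \in A[\tilde A_{n-1}]$ and $0 \le j < n$; commutation with $\bar\rho_B$ forces $\bar a$ to be fixed by the cyclic-shift automorphism of $A[\tilde A_{n-1}]$, and commutation with each $\bar t_i$ gives $\bar a\,\bar t_{i+j}\,\bar a^{-1} = \bar t_i$ inside $A[\tilde A_{n-1}]$. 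When $j=0$ this places $\bar a$ into $Z(A[\tilde A_{n-1}]) = 1$; when $j \ne 0$ it would realize the cyclic shift of $\tilde A_{n-1}$ as an inner automorphism of $A[\tilde A_{n-1}]$, which is impossible. Passing to the quotient then gives $\Out(\ov{A[B_{n}]}) \simeq \langle\bar\tau,\bar\mu\rangle \simeq \Z/2\Z \times \Z/2\Z$. The main obstacle is this centerlessness verification, as the rest of the argument is formal from Theorem~\ref{thm2_8} and Proposition~\ref{prop2_9}.
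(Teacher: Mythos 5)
Your proposal is correct and follows the same overall skeleton as the paper (identify the automorphisms among the classified endomorphisms, show $\Inn\cap\langle\bar\tau,\bar\mu\rangle=\{\id\}$, prove centerlessness), but both of the key sub-steps are carried out differently. For the trivial intersection, you invoke Proposition~\ref{prop2_9} applied to $\nu=\conj_x\circ\id$, whereas the paper argues from scratch with the conjugation-invariant homomorphism $\bar\xi\colon\ov{A[B_{n}]}\to\Z/n\Z$; your route is a legitimate and slightly cleaner shortcut, since Proposition~\ref{prop2_9} already packages exactly the abelianization invariant needed (it is itself proved via a map to $\Z\oplus\Z/n\Z$). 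For centerlessness, the paper lifts a central element to $A[B_{n}]$ and uses $\xi\colon A[B_{n}]\to\Z$ to show the lift is already central in $A[B_{n}]$; you instead work directly in the semidirect product $A[\tilde A_{n-1}]\rtimes\langle\bar\rho_B\rangle$ and reduce to the triviality of $Z(A[\tilde A_{n-1}])$ together with the non-innerness of the cyclic shift. Both arguments are valid, and yours has the advantage of staying inside the quotient group.

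The one place you should supply a justification is the assertion that for $j\neq 0$ the relation $\bar a\,\bar t_{i+j}\,\bar a^{-1}=\bar t_i$ (for all $i$) is ``impossible,'' i.e.\ that no nontrivial power of the cyclic shift of $A[\tilde A_{n-1}]$ is inner. This is true but not free: it follows by the same device used in the paper's Lemma~\ref{lem4_1} --- iterate the relation $n$ times to get $\bar a^{\,n}\,\bar t_i\,\bar a^{-n}=\bar t_{i+jn}=\bar t_i$, so $\bar a^{\,n}\in Z(A[\tilde A_{n-1}])=1$, hence $\bar a=1$ by torsion-freeness, forcing $\bar t_{i+j}=\bar t_i$ and thus $j\equiv 0 \pmod n$, a contradiction. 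With that sentence added, your argument is complete.
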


Recall that a group $G$ is called \emph{Hopfian} if every surjective endomorphism $G\to G$ is injective, and it is called \emph{co-Hopfian} if every injective endomorphism $G\to G$ is surjective. As another straightforward consequence of Theorem~\ref{thm2_8} we obtain the following.
\begin{corl}
Let $n\ge5$. Then $\ov{A[B_{n}]}$ is both Hopfian and co-Hopfian.
\end{corl}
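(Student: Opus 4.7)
The plan is to derive the statement as a direct consequence of Theorem~\ref{thm2_8}. Since every endomorphism $\phi$ of $\ov{A[B_{n}]}$ is, up to conjugation, of one of the two explicit types listed in that theorem, and since conjugation by any element of $\ov{A[B_n]}$ is an automorphism and hence preserves both injectivity and surjectivity, it suffices to verify the Hopf and co-Hopf properties separately for each type. Type (2) endomorphisms are precisely the four elements of $\langle\bar\tau,\bar\mu\rangle\subseteq\Aut(\ov{A[B_{n}]})$, and are therefore automorphisms; in particular they are both injective and surjective.

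The main case is thus type (1). Here I would observe that the image of $\phi$ is generated by $\phi(\bar t_0)=\bar g$ and $\phi(\bar\rho_B)=\bar\rho_B^\kappa$ (since all $\bar t_i$ go to the same element $\bar g$), and these two generators commute by the hypothesis $\bar g\in Z_{\ov{A[B_{n}]}}(\bar\rho_B^\kappa)$. Hence $\Im(\phi)$ is an abelian subgroup of $\ov{A[B_{n}]}$. However, $\ov{A[B_{n}]}$ itself is non-abelian: it contains the subgroup $A[\tilde A_{n-1}]$, whose adjacent generators $\bar t_0$ and $\bar t_1$ satisfy the braid relation $\bar t_0\bar t_1\bar t_0=\bar t_1\bar t_0\bar t_1$ but do not commute. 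Consequently the image is a proper subgroup, so $\phi$ is not surjective. By the same token, the nontrivial commutator $[\bar t_0,\bar t_1]$ lies in $\ker(\phi)$, so $\phi$ is not injective.

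Putting the two cases together, an endomorphism of $\ov{A[B_{n}]}$ is injective if and only if it is surjective if and only if it is an automorphism; this yields simultaneously the Hopfian and co-Hopfian properties. There is essentially no real obstacle at this stage -- all the heavy lifting is contained in Theorem~\ref{thm2_8}. The only minor verification needed is the non-commutation of $\bar t_0$ and $\bar t_1$ in $\ov{A[B_{n}]}$, which reduces to the elementary analogous statement in the Coxeter quotient $W[\tilde A_{n-1}]$, where the corresponding involutions generate an infinite dihedral subgroup.
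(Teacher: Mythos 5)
Your proof is correct and follows exactly the paper's (one-line) argument: type (2) endomorphisms are automorphisms, while type (1) endomorphisms have abelian image and are therefore neither injective nor surjective. One small slip in your final remark: since $m_{t_0,t_1}=3$ in $\tilde A_{n-1}$, the images of $\bar t_0,\bar t_1$ in $W[\tilde A_{n-1}]$ generate the dihedral group of order $6$ (not the infinite dihedral group), but this still gives the non-commutation you need.
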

\begin{proof}
Indeed, endomorphisms of case (1) of Theorem~\ref{thm2_8} are neither injective nor surjective, and endomorphisms of case (2) are automorphisms.
\end{proof}
Notice that the fact that $\ov{A[B_{n}]}$ is co-Hopfian was proved for $n\ge3$ in~\cite{BelMar2} by different methods. Also, that $\ov{A[B_n]}$ is Hopfian is also known, but the proof of it is not very straightforward. It relies on the fact that $A[B_n]$ admits a faithful linear representation (as a subgroup of the braid group), and that the quotient by the center of a linear group is linear (see~\cite[Theorem~6.4]{Wehrf1}).

Finally, recall that a subgroup $H$ of a group $G$ is called \emph{characteristic} in $G$ if for every automorphism $\varphi\in\Aut(G)$ we have $\varphi(H)=H$, and $H$ is called \emph{fully characteristic} in $G$ if for every endomorphism $\varphi\colon G\to G$ we have $\varphi(H)\subseteq H$. We get the following remarkable fact.
\begin{corl}\label{cor2_12}
Let $n\ge5$. Then the subgroup $A[\tilde A_{n-1}]=\langle \bar t_0,\dots,\bar t_{n-1}\rangle$ of $\ov{A[B_{n}]}$ is characteristic, but not fully characteristic in $\ov{A[B_{n}]}$.
\end{corl}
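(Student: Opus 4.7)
The plan is to treat the two assertions separately. For the characteristic property, I would appeal to Corollary~\ref{cor2_10}, which gives $\Aut(\ov{A[B_{n}]}) = \Inn(\ov{A[B_{n}]})\rtimes\langle\bar\tau,\bar\mu\rangle$. Since $\ov{A[B_{n}]}\simeq A[\tilde A_{n-1}]\rtimes\langle\bar\rho_B\rangle$, the subgroup $A[\tilde A_{n-1}]$ is normal, and hence preserved by every inner automorphism. It then remains to check that $\bar\tau$ and $\bar\mu$ preserve it, which is immediate from the formulas $\bar\tau(\bar t_i)=\bar t_{n-i}^{-1}$ and $\bar\mu(\bar t_i)=\bar t_i^{-1}$: on the generators of $A[\tilde A_{n-1}]$ the images already lie in $A[\tilde A_{n-1}]$, and since both automorphisms are involutions their restrictions are necessarily bijective onto $A[\tilde A_{n-1}]$.

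For the failure of full characteristicness, I would exhibit a concrete endomorphism whose image escapes $A[\tilde A_{n-1}]$, taking as a model case~(1) of Theorem~\ref{thm2_8} with $\kappa=1$ and $\bar g=\bar\rho_B$. Specifically, define $\phi\colon\ov{A[B_{n}]}\to\ov{A[B_{n}]}$ by
\[
\phi(\bar t_i)=\bar\rho_B\ \text{for all}\ 0\le i\le n-1,\qquad \phi(\bar\rho_B)=\bar\rho_B.
\]
Well-definedness is verified against the presentation induced by the semidirect product decomposition $\ov{A[B_{n}]}\simeq A[\tilde A_{n-1}]\rtimes\langle\bar\rho_B\rangle$ (with $\bar\rho_B^n=1$): the braid and commutation relations among the $\bar t_i$ become tautologies as all generators are sent to the same element, the conjugation relations $\bar\rho_B\bar t_i\bar\rho_B^{-1}=\bar t_{i+1}$ map to the trivially valid identity $\bar\rho_B=\bar\rho_B$, and the torsion relation $\bar\rho_B^n=1$ is evidently preserved. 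On the other hand, $\phi(\bar t_0)=\bar\rho_B\notin A[\tilde A_{n-1}]$, because $\bar\rho_B$ is a nontrivial element of the complementary factor $\langle\bar\rho_B\rangle$ in the semidirect product decomposition, which intersects $A[\tilde A_{n-1}]$ trivially.

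No serious obstacle is anticipated: the entire argument rests on Corollary~\ref{cor2_10} and on the semidirect product presentation of $\ov{A[B_{n}]}$. The only point requiring any care is the well-definedness check for the endomorphism $\phi$ in the second paragraph, and this reduces to a short inspection of the defining relations.
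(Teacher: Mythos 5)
Your proposal is correct and follows essentially the same route as the paper: characteristicness via the decomposition $\Aut(\ov{A[B_{n}]})=\Inn(\ov{A[B_{n}]})\rtimes\langle\bar\tau,\bar\mu\rangle$ from Corollary~\ref{cor2_10} together with normality of $A[\tilde A_{n-1}]$ and the explicit action of $\bar\tau,\bar\mu$ on the $\bar t_i$, and failure of full characteristicness via a case~(1) endomorphism of Theorem~\ref{thm2_8} sending the $\bar t_i$ into $\langle\bar\rho_B\rangle$. Your version is slightly more explicit than the paper's (you write down the endomorphism $\bar t_i\mapsto\bar\rho_B$, $\bar\rho_B\mapsto\bar\rho_B$ and verify it against the presentation, where the paper merely asserts that such endomorphisms exist), but the content is the same.
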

The fact that $A[\tilde A_{n-1}]$ is a characteristic subgroup of $\ov{A[B_{n}]}$ for $n\ge3$ was mentioned in~\cite[p.~328]{ChaCri1}, where this fact was established by different methods. Notice that $A[\tilde A_{n-1}]$ is not characteristic in $A[B_n]$ because of the presence of transvection automorphisms $T^k$.


\section{Preliminaries}\label{sec3}

In this section we state auxiliary results which we need for proving main theorems.

The first one is a description of homomorphisms from $A[\tilde A_{n-1}]\to A[A_n]$ from~\cite{ParSor1}. Recall that the standard generators for $A[\tilde A_{n-1}]$ are denoted $t_0,t_1,\dots,t_{n-1}$, and the standard generators of $A[A_n]$ are denoted $s_1,\dots,s_n$, with $\Delta$ denoting the Garside element of $A[A_n]$. We also define two elements $u_0$, $u_1\in A[\tilde A_{n-1}]$:
\[
u_0=t_0,\qquad u_1=\Delta_Y^{-1}\,t_0\,\Delta_Y,
\]
where, as before, $\Delta_Y$ denotes the Garside element of $A_Y[\tilde A_{n-1}]=\langle t_1,\dots,t_{n-1}\rangle\simeq A[A_{n-1}]$.
Recall also that $v_0=\rho_0\, t_{n-1}\,\rho_0^{-1}$ and $v_1=\rho_1\, t_{n-1}\,\rho_1^{-1}$, as defined in Section~\ref{sec2}. We say that a group homomorphism $\varphi\colon G\to H$ is \emph{cyclic}, if its image is a cyclic subgroup of $H$. 

\begin{prop}[\protect{Paris--Soroko~\cite[Proposition~4.1]{ParSor1}}]\label{prop3_1}
Let $n \ge 5$.
Let $\varphi\colon A[\tilde A_{n-1}] \to A[A_{n}]$ be a homomorphism.
Then we have one of the following three possibilities:
\begin{itemize}
\item[(1)]
$\varphi$ is cyclic.
\item[(2)]
There exist $g \in A[A_{n}]$, $k \in \{0,1\}$, $\varepsilon \in \{\pm 1\}$ and $q \in \Z$ such that $\varphi(t_i) = g\, s_i^{\varepsilon}\, \Delta^{2q}\, g^{-1}$ for all $1 \le i \le n-1$, and $\varphi(t_0) = g\, u_k^{\varepsilon}\, \Delta^{2q}\, g^{-1}$.
\item[(3)]
There exist $g \in A[A_{n}]$, $k \in \{0,1\}$, $\varepsilon \in \{\pm 1\}$ and $p, q \in \Z$ such that $\varphi(t_i) = g\, s_i^{\varepsilon}\, \Delta_Y^{2p}\, \Delta^{2q}\, g^{-1}$ for all $1 \le i\le n-1$, and $\varphi(t_0) = g\, v_k^{\varepsilon}\, \Delta_Y^{2p}\, \Delta^{2q}\, g^{-1}$.\pushQED{\qed}\qedhere\popQED
\end{itemize}
\end{prop}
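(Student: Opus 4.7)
My plan is to reduce the classification of homomorphisms $A[\tilde A_{n-1}]\to A[A_n]$ to the known classification of homomorphisms between finite-type braid groups by restricting $\varphi$ to the spherical parabolic subgroup $A_Y[\tilde A_{n-1}]=\langle t_1,\dots,t_{n-1}\rangle\cong A[A_{n-1}]$, and then to determine $\varphi(t_0)$ by analyzing its commutation pattern with the images of the other generators. First I would consider the restriction $\varphi|_{A_Y[\tilde A_{n-1}]}\colon A[A_{n-1}]\to A[A_n]$ and invoke the classification of such homomorphisms due to Castel (for $n-1\ge 4$, i.e. $n\ge 5$): any such homomorphism is either cyclic, or, up to conjugation, of the form $t_i\mapsto s_i^{\varepsilon}\,z$ for some $\varepsilon\in\{\pm 1\}$ and some element $z$ in the centralizer of $s_1,\dots,s_{n-1}$ in $A[A_n]$. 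The centralizer of $A[A_{n-1}]=\langle s_1,\dots,s_{n-1}\rangle$ inside $A[A_n]$ is well understood and is generated by $\Delta_Y^2$ and $\Delta^2$, so $z=\Delta_Y^{2p}\Delta^{2q}$ for some $p,q\in\Z$.

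If $\varphi|_{A_Y[\tilde A_{n-1}]}$ is cyclic, I would then use the fact that $t_0$ braids with $t_1$ and $t_{n-1}$ (so $\varphi(t_0)$ must braid with at least two powers of the same element in a cyclic group) to force $\varphi$ itself to be cyclic, which gives case~(1). The image being cyclic prevents the braid relation from being anything other than trivial, so all generators must map to powers of a common element.

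In the non-cyclic case, after conjugating so that $\varphi(t_i)=s_i^{\varepsilon}\,\Delta_Y^{2p}\Delta^{2q}$ for $1\le i\le n-1$, the hard step is to pin down $\varphi(t_0)$. This element must commute with $\varphi(t_i)$ for $2\le i\le n-2$ and satisfy the braid relation of length~$3$ with both $\varphi(t_1)$ and $\varphi(t_{n-1})$. Here I would pass to the mapping class group interpretation, viewing $A[A_n]$ as the mapping class group of the $(n+1)$-punctured disk: the elements $\varphi(t_i)$, $1\le i\le n-1$, are (up to a central twist) Dehn twists along a chain of arcs between consecutive punctures, so $\varphi(t_0)$ corresponds to a Dehn twist along an essential simple closed curve whose only intersections with this chain are with the first and last arcs, each in a single point. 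A case analysis of such curves on the punctured disk (together with the fact that $\Delta_Y^{2p}$ lies in the center of the subgroup generated by the first $n-1$ twists) gives exactly two geometric types of candidates, corresponding to the pairs $(u_0,u_1)$ when $p=0$ (case~(2)) and $(v_0,v_1)$ when the twist by $\Delta_Y^{2p}$ is nontrivially intertwined (case~(3)). The main obstacle will be this last step: correctly enumerating the admissible isotopy classes of the reduction curve for $\varphi(t_0)$ and matching them to the algebraic expressions $u_k$ and $v_k$ while tracking the central factors $\Delta_Y^{2p}\Delta^{2q}$ throughout. Once the geometric classification is established, the two forms in (2) and (3) are read off directly.
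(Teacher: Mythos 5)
First, a point of context: the paper does not prove this proposition here --- it is imported wholesale from \cite[Proposition~4.1]{ParSor1} --- so there is no internal proof to compare against. Your overall route (restrict $\varphi$ to the spherical parabolic $A_Y[\tilde A_{n-1}]\cong A[A_{n-1}]$, apply the classification of homomorphisms between braid groups to get $t_i\mapsto s_i^{\varepsilon}\Delta_Y^{2p}\Delta^{2q}$ up to conjugation, identify the transvection factor via \cite[Theorem~1.1]{Paris2}, kill the cyclic case using the commutation of $t_0$ with $t_2$ together with the braid relation with $t_1$, and then pin down $\varphi(t_0)$ geometrically) is essentially the strategy of the cited source, and the first three steps are sound. (One citation quibble: $A[A_{n-1}]$ is the braid group on $n$ strands, so for $n=5$ you need \cite{ChKoMa1} rather than \cite{Caste1}.)

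The final step, which you rightly identify as the crux, contains a concrete error and a genuine gap. The error: cases (2) and (3) are \emph{not} separated by whether $p=0$. Case (3) with $p=0$, namely $\varphi(t_i)=s_i^{\varepsilon}\Delta^{2q}$ and $\varphi(t_0)=v_k^{\varepsilon}\Delta^{2q}$, is a bona fide possibility and is not of the form of case (2), since $v_k=H_{c_k}$ and $u_j=H_{b_j}$ are half-twists along pairwise non-isotopic arcs (Figure~\ref{fig3_3}); such homomorphisms really occur --- they are exactly what underlies the type-(3) endomorphisms of $A[B_n]$ in Theorem~\ref{thm2_1} with $p=0$. So a case analysis whose output is ``$(u_0,u_1)$ when $p=0$ and $(v_0,v_1)$ when $p\ne 0$'' would be proving a false statement; the correct dichotomy is by the isotopy class of the arc supporting the half-twist part of $\varphi(t_0)$ (one disjoint from the curve $d$, forcing $p=0$, versus one crossing it, leaving $p$ free). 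The gap: you never justify that $\varphi(t_0)$ carries the \emph{same} decoration $\Delta_Y^{2p}\Delta^{2q}$ as the other generators. The braid relation with $\varphi(t_1)$ only yields that $\varphi(t_0)$ is conjugate to $s_1^{\varepsilon}\Delta_Y^{2p}\Delta^{2q}$, i.e.\ equals $H_a^{\varepsilon}\,T_c^{p}\,\Delta^{2q}$ for some arc $a$ and some curve $c$ in the mapping class group orbit of $d$; showing that $c$ is isotopic to $d$ requires an essential-reduction-system argument of the kind used in Section~\ref{sec4} (Theorem~\ref{thm3_3} and Proposition~\ref{prop3_4}), not merely the remark that $\Delta_Y^2$ centralizes $\langle s_1,\dots,s_{n-1}\rangle$. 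Relatedly, the objects to enumerate are arcs supporting half-twists (or the boundaries $\hat a$ of their regular neighborhoods), not ``essential simple closed curves meeting the first and last arcs once each''; conflating half-twists with Dehn twists makes the intended curve-counting ill-posed.
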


We will also need an interpretation of Artin groups $A[\tilde A_{n-1}]$, $A[B_n]$ and $A[A_n]$ as subgroups of the mapping class group of the punctured disk. So, we give below the information on mapping class groups that we will need, and we refer to~\cite{FarMar1} for a complete exposition on the subject.

Let $\Sigma$ be a compact oriented surface with or without boundary, and let $\PP$ be a finite family of punctures  in the interior of $\Sigma$.
We denote by $\Homeo^+(\Sigma,\PP)$ the group of homeomorphisms of $\Sigma$ which preserve the orientation, which are the identity on a neighborhood of the boundary, and which leave set-wise invariant the set $\PP$.
The \emph{mapping class group} of the pair $(\Sigma,\PP)$, denoted by $\MM(\Sigma,\PP)$, is the group of isotopy classes of elements of $\Homeo^+(\Sigma,\PP)$, where isotopies are required to leave the set $\partial \Sigma \cup \PP$ point-wise invariant. 
If $\PP=\varnothing$, then we write $\Homeo^+(\Sigma)=\Homeo^+(\Sigma,\varnothing)$ and $\MM(\Sigma)=\MM(\Sigma,\varnothing)$.

A \emph{circle} of $(\Sigma,\PP)$ is the image of an embedding $a\colon \S^1 \hookrightarrow \Sigma \setminus (\partial\Sigma \cup \PP)$.
It is called \emph{generic} if it does not bound any disk containing $0$ or $1$ puncture, and if it is not parallel to any boundary component.
The isotopy class of a circle $a$ is denoted by $[a]$. We emphasize that isotopies are considered in $\Sigma \setminus (\partial\Sigma \cup \PP)$, i.e.\ circles are not allowed to pass through points of $\partial\Sigma \cup\PP$ under isotopies. 
We denote by $\CC(\Sigma,\PP)$ the set of isotopy classes of generic circles.
The \emph{intersection index} of two classes $[a],[b]\in\CC(\Sigma,\PP)$ is $i([a],[b])=\min\{|a'\cap b'|\mid a'\in[a]\text{ and }b'\in[b]\}$.
The set $\CC(\Sigma,\PP)$ is endowed with a structure of simplicial complex, where a non-empty finite subset $\FF\subset\CC(\Sigma,\PP)$ is a simplex if $i([a],[b])=0$ for all $[a],[b]\in\FF$.
This complex is called the \emph{curve complex} of $(\Sigma,\PP)$.

In the present paper the right Dehn twist along a circle $a$ is denoted by $T_a$.

An \emph{arc} of $(\Sigma,\PP)$ is the image of an embedding $a\colon [0,1]\hookrightarrow\Sigma\setminus\partial\Sigma$ such that $a([0,1])\cap\PP=\{a(0),a(1)\}$, with $a(0)\ne a(1)$. We consider arcs up to isotopies under which interior points of arcs are mapped into $\Sigma\setminus(\partial\Sigma\cup\PP)$. In particular, such isotopies are required to fix the set $\PP$ pointwise.
We denote the isotopy class of an arc $a$ by $[a]$. 

With an arc $a$ of $(\Sigma,\PP)$ we associate an element $H_a\in\MM(\Sigma,\PP)$, called the \emph{(right) half-twist} along $a$.
This element is the identity outside a regular neighborhood of $a$, it exchanges the two ends of $a$, and $H_a^2=T_{\hat a}$, where $\hat a$ is the boundary of a regular neighborhood of $a$, see Figure~\ref{fig:halftwist}. We refer the reader to~\cite[Section~1.6.2]{KasTur1} for more information about properties of half-twists.

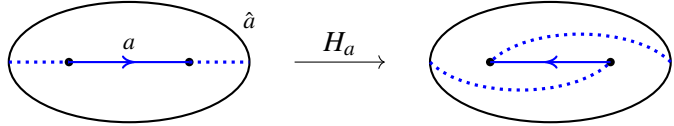
\begin{figure}[htb!]
\begin{center}
\begin{tikzpicture}[scale=0.8]
\draw[thick] (0,0) ellipse [x radius=2, y radius=1];
\fill (-1,0) circle (2pt);
\fill (1,0) circle (2pt);
\draw[dotted, very thick, blue] (-2,0)--(-1,0);
\draw[dotted, very thick, blue] (1,0)--(2,0);
\draw[->-=0.55,thick, blue] (-1,0)--(1,0);
\draw (0,0.3) node {\footnotesize$a$};
\draw (2,0.7) node {\footnotesize$\hat a$};

\begin{scope}[xshift=7cm]
\draw[thick] (0,0) ellipse [x radius=2, y radius=1];
\fill (-1,0) circle (2pt);
\fill (1,0) circle (2pt);
\draw[dotted, very thick, blue] (2,0) to [out=135,in=45,looseness=0.75] (-1,0); 
\draw[dotted, very thick, blue] (-2,0) to [out=-45,in=-135,looseness=0.75] (1,0);
\draw[->-=0.55,thick, blue] (1,0)--(-1,0);
\end{scope}

\draw[->] (2.75,0) to (4.25	,0); 
\draw (3.5,0.3) node {$H_a$};

\end{tikzpicture}
\end{center}
\caption{A half-twist\label{fig:halftwist}}
\end{figure}

We denote by $\D$ the closed disk, and we choose a collection $\PP_{n+1}=\{p_0,p_1,\dots, p_{n-1},p_{n}\}$ of $n+1$ punctures in the interior of $\D$ (see Figure~\ref{fig3_2}). Let $b_0$, $a_1,\dots,a_{n-1},a_{n}$ be the arcs drawn in Figure~\ref{fig3_2}. 
In what follows we will be using the following facts, which were established in~\cite{ParSor1}.

\begin{prop}\label{prop_mcg}
With the notation of Figure~\ref{fig3_2},
\begin{enumerate}
\item there is an isomorphism $\Psi\colon A[A_{n}]\to\MM(\D,\PP_{n+1})$ which sends $s_i$ to the half-twist $H_{a_i}$ for all $1\le i\le n$;
\item the element $t_0$ is identified via $\Psi$ with the half-twist $H_{b_0}$, so that the subgroup $A[\tilde A_{n-1}]=\langle t_0,\dots,t_{n-1}\rangle$ is generated by half-twists $H_{b_0}$, $H_{a_1}$, \dots, $H_{a_{n-1}}$;
\item the elements $u_0$, $u_1$, $v_0$, $v_1$ (appearing in Proposition~\ref{prop3_1}) are identified via $\Psi$ with half-twists $H_{b_0}$, $H_{b_1}$, $H_{c_0}$, $H_{c_1}$, respectively, where arcs $b_0$, $b_1$, $c_0$, $c_1$ are depicted in Figure~\ref{fig3_3};
\item the element $\Delta_Y^2$ corresponds via $\Psi$ to the Dehn twist $T_d$, where the circle $d$ is depicted in Figure~\ref{fig3_3}, and the element $\Delta^2$ corresponds to the Dehn twist $T_{\partial\D}$ about the boundary $\partial\D$ of the disk $\D$;
\item the subgroup $A[B_n]=\langle r_1,\dots,r_{n-1},r_n\rangle=\langle s_1,\dots,s_{n-1},s_n^2\rangle$ is identified via $\Psi$ with the stabilizer in $\MM(\D,\PP_{n+1})$ of the last puncture $p_{n}$, i.e.\ $A[B_n]=\{\,f\in\MM(\D,\PP_{n+1})\mid f(p_n)=p_n\}$.\pushQED{\qed}\qedhere\popQED
\end{enumerate}
\end{prop}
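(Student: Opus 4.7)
The strategy is to derive all five assertions from the classical identification of $A[A_n]$ with the mapping class group of an $(n+1)$-punctured disk, after which each remaining item reduces to an explicit computation with half-twists and arcs in $(\D,\PP_{n+1})$.

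For (1), I would invoke the Birman--Artin theorem: the map $\Psi\colon A[A_n]\to\MM(\D,\PP_{n+1})$ sending $s_i$ to the half-twist $H_{a_i}$ is a well-defined isomorphism. Well-definedness reduces to checking that these half-twists satisfy the defining relations of $A[A_n]$, which is immediate from the geometric criterion: $H_{a_i} H_{a_j}=H_{a_j} H_{a_i}$ whenever $a_i,a_j$ are disjoint, and $H_{a_i} H_{a_{i+1}} H_{a_i}=H_{a_{i+1}} H_{a_i} H_{a_{i+1}}$ whenever they share an endpoint. Bijectivity is then the classical theorem for the mapping class group of the $(n+1)$-punctured disk.

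For (2) and (3), the key tool is the conjugation identity $f\, H_a\, f^{-1}=H_{f(a)}$ valid for any $f\in\MM(\D,\PP_{n+1})$. Since $t_0=\rho_B\,r_{n-1}\,\rho_B^{-1}$ and $\iota_B(\rho_B)=s_1\cdots s_{n-1}s_n^2$, we have $\Psi(t_0)=H_\alpha$ where $\alpha=\Psi(\rho_B)(a_{n-1})$, and one checks pictorially from Figure~\ref{fig3_2} that $\alpha$ is isotopic to $b_0$. The computations for $u_0=t_0=H_{b_0}$, $u_1=\Delta_Y^{-1}\, t_0\,\Delta_Y=H_{\Delta_Y^{-1}(b_0)}=H_{b_1}$ (using the standard fact that $\Delta_Y$ realizes the reflection $a_i\mapsto a_{n-i}$ on arcs among $\{p_0,\dots,p_{n-1}\}$), and $v_k=\rho_k\,t_{n-1}\,\rho_k^{-1}=H_{\Psi(\rho_k)(a_{n-1})}=H_{c_k}$ are of the same nature: track a single arc through a fixed composition of half-twists and match the result with the pictures of $b_1,c_0,c_1$ in Figure~\ref{fig3_3}.

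For (4), I would use the standard fact that in $A[A_m]$ the squared Garside element $\Delta^2$ generates the center and corresponds, under the half-twist identification, to the Dehn twist about the boundary of a disk containing precisely the $m+1$ punctures permuted by $s_1,\dots,s_m$. Applied to $A[A_n]$ on $\PP_{n+1}$ this gives $\Psi(\Delta^2)=T_{\partial\D}$; applied to $A_Y[\tilde A_{n-1}]\cong A[A_{n-1}]$ acting on $\{p_0,\dots,p_{n-1}\}$ it gives $\Psi(\Delta_Y^2)=T_d$ for the circle $d$ of Figure~\ref{fig3_3}. For (5), the composition $A[A_n]\xrightarrow{\Psi}\MM(\D,\PP_{n+1})\to\Sym(\PP_{n+1})$ factors through the Coxeter surjection $A[A_n]\to W[A_n]\cong\Sym(\PP_{n+1})$; the stabilizer of $p_n$ in $\Sym(\PP_{n+1})$ is $\Sym\{p_0,\dots,p_{n-1}\}$, generated by the images of $s_1,\dots,s_{n-1}$. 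The preimage of this stabilizer in $A[A_n]$ therefore contains $\langle s_1,\dots,s_{n-1}\rangle$ and the pure braid kernel; since $s_n^2$ lies in this preimage and the pure braid kernel on the last strand is generated by $s_n^2$ together with its conjugates by elements of $\langle s_1,\dots,s_{n-1}\rangle$, the preimage equals $\langle s_1,\dots,s_{n-1},s_n^2\rangle$, which is $A[B_n]$ via $\iota_B$ (injectivity being recorded in the excerpt).

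The main obstacle I expect is the verification of the arc computations in (2) and (3); while these are pictorially transparent, one must carefully track several successive half-twists through Figure~\ref{fig3_2}, and the arc $b_0$ winds past the puncture $p_n$ in a way that is easy to draw incorrectly. Everything else relies on well-known structural facts (the Birman isomorphism, central elements as boundary twists, and the Coxeter surjection) and amounts to routine bookkeeping.
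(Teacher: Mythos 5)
Your proposal is correct, and it follows the standard route one would expect. Note, however, that the paper itself gives no proof of this proposition: it is stated with a \qed and explicitly deferred to the authors' earlier article \cite{ParSor1}, where these identifications were established. Your sketch --- the Artin/Birman isomorphism for (1), the conjugation identity $f H_a f^{-1}=H_{f(a)}$ combined with arc-tracking for (2) and (3), the identification of squared Garside elements of parabolic subgroups with Dehn twists about curves enclosing the corresponding punctures for (4), and the description of the stabilizer of $p_n$ as the preimage of $\Sym\{p_0,\dots,p_{n-1}\}$ under the permutation map for (5) --- is exactly the standard argument and is sound. The only step I would flag as needing one more line is the claim in (5) that the pure-braid generators $A_{i,n+1}$ involving the last strand are conjugates of $s_n^2$ by elements of $\langle s_1,\dots,s_{n-1}\rangle$; this is true (by the change-of-coordinates principle applied to curves enclosing $p_n$ and one other puncture, using that $\langle s_1,\dots,s_{n-1}\rangle$ realizes all permutations of $p_0,\dots,p_{n-1}$ while fixing $p_n$), but it is the one assertion in your outline that is not purely formal.
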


\begin{figure}[ht!]
\begin{center}
\begin{tikzpicture}[very thick, scale=1] 
\begin{scope}
\draw[blue] (0,0)--(0,1)--(0.67,0.67)--(1,0)--(0.83,-0.37);
\draw[blue] (-0.67,0.67)--(-1,0)--(-0.67,-0.67)--(-0.3,-0.85);
\draw[blue] (0.22,0.4) node {$\scriptstyle a_{n}$};
\draw[blue] (0.5,0.98) node {$\scriptstyle a_{n\mhyphen1}$};
\draw[blue] (1.1,0.4) node {$\scriptstyle a_{n\mhyphen2}$};
\draw[blue] (-1,0.4) node {$\scriptstyle a_{1}$};
\draw[blue] (-1,-0.4) node {$\scriptstyle a_{2}$};
\draw[red] (0,1)--(-0.67,0.67);
\draw[red] (-0.45,1) node {$\scriptstyle b_0$};
\draw[loosely dotted, very thick, blue] (-0.3,-0.85) to [out=330,in=240,looseness=0.75] (0.83,-0.37); 
\fill (0,0) circle (2pt) node [below right=-3pt] {$\scriptscriptstyle n$};
\fill (1,0) circle (2pt) node [right=-1pt] {$\scriptscriptstyle n\mhyphen3$};
\fill (0,1) circle (2pt) node [above=0pt] {$\scriptscriptstyle n\mhyphen1$};
\fill (-1,0) circle (2pt) node [left=-2pt] {$\scriptscriptstyle 1$};
\fill (0.67,0.67) circle (2pt) node [above right=-2pt] {$\scriptscriptstyle n\mhyphen2$};
\fill (-0.67,0.67) circle (2pt) node [above left=-3pt] {$\scriptscriptstyle 0$};
\fill (-0.67,-0.67) circle (2pt) node [below left=-3pt] {$\scriptscriptstyle 2$};
\draw[very thick] (0,0) ellipse (2cm and 1.4cm);
\end{scope} 
\end{tikzpicture}
\caption{Disk $\D$ with punctures $p_i$ (denoted for the ease of notation by $i$), $0\le i\le n$.
\label{fig3_2}}
\end{center}
\end{figure}
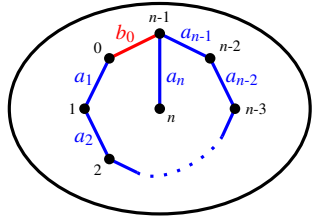

\begin{figure}[ht!]
\begin{center}
\includegraphics[width=4cm]{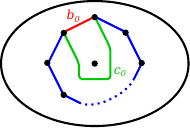}\qquad
\includegraphics[width=4cm]{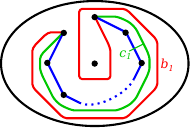}\qquad
\includegraphics[width=4cm]{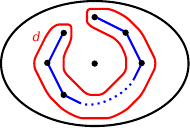}
\caption{Arcs and a circle in the punctured disk for Proposition~\ref{prop_mcg}}\label{fig3_3}
\end{center}
\end{figure}

Let $\Sigma$ be an oriented compact surface, and let $\PP$ be a finite collection of punctures in the interior of $\Sigma$.
Assume the Euler characteristic of $\Sigma\setminus\PP$ is negative.
Let $f\in\MM(\Sigma,\PP)$.
We say that a simplex $\FF$ of $\CC(\Sigma,\PP)$ is a \emph{reduction system} for $f$ if $f(\FF)=\FF$.
In this case any element of $\FF$ is called a \emph{reduction class} for $f$.
A reduction class $[a]$ is an \emph{essential reduction class} if, for each $[b]\in\CC(\Sigma,\PP)$ such that $i([a],[b])\neq 0$ and for each $m\in\Z\setminus\{0\}$, we have $f^m([b])\neq [b]$.
In particular, if $[a]$ is an essential reduction class and $[b]$ is any reduction class, then $i([a],[b])=0$.
We denote by $\SS(f)$ the set of essential reduction classes for $f$.
The following gathers together some results on $\SS(f)$ that we will use in the proofs.

\begin{thm}[Birman--Lubotzky--McCarthy~\cite{BiLuMc1}]\label{thm3_3}
Let $\Sigma$ be an oriented compact surface, and let $\PP$ be a finite collection of punctures in the interior of $\Sigma$. 
Assume the Euler characteristic of $\Sigma\setminus\PP$ is negative.
Let $f\in\MM(\Sigma,\PP)$.
\begin{itemize}
\item[(1)]
If $\SS(f)\neq\varnothing$, then $\SS(f)$ is a reduction system for $f$.
\item[(2)]
We have $\SS(f^n)=\SS(f)$ for all $n\in\Z\setminus\{0\}$.
\item[(3)]
We have $\SS(gfg^{-1})=g(\SS(f))$ for all $g\in\MM(\Sigma,\PP)$.\qed
\end{itemize}
\end{thm}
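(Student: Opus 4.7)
The plan is to peel off the three claims in order of difficulty. Parts (2) and (3) are essentially formal manipulations of the definition, so I would dispatch them first; part (1) contains the real content and is where I would spend most of the effort.

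For part (3), the two ingredients are that the mapping class group action preserves the intersection index, i.e.\ $i(g([a]),g([b]))=i([a],[b])$ for any $g\in\MM(\Sigma,\PP)$, and that conjugation commutes with taking powers: $(gfg^{-1})^m=g\,f^m\,g^{-1}$. Given $[a]\in\SS(f)$ and $[c]$ with $i(g([a]),[c])\neq 0$, rewrite this as $i([a],g^{-1}([c]))\neq 0$; essentiality of $[a]$ for $f$ gives $f^m(g^{-1}([c]))\neq g^{-1}([c])$ for all $m\neq 0$, which, after applying $g$, is the statement $(gfg^{-1})^m([c])\neq [c]$. Hence $g(\SS(f))\subseteq \SS(gfg^{-1})$, and the reverse inclusion comes from applying the same argument with $g^{-1}$. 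For part (2), the inclusion $\SS(f)\subseteq \SS(f^n)$ is immediate, since failure of $f$-periodicity implies failure of $f^n$-periodicity; conversely, if $f^k([b])=[b]$ for some $k\neq 0$, then $(f^n)^k([b])=[b]$, so any class certifying non-essentiality for $f$ also certifies it for $f^n$.

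For part (1) the first observation is a symmetry: $\SS(f^{-1})=\SS(f)$, because $[b]$ is $f$-periodic if and only if it is $f^{-1}$-periodic. The $f$-invariance $f(\SS(f))=\SS(f)$ is then a short check. Take $[a]\in \SS(f)$ and any $[c]$ with $i(f([a]),[c])\neq 0$; rewriting as $i([a],f^{-1}([c]))\neq 0$ and applying essentiality, $f^m(f^{-1}([c]))=f^{m-1}([c])\neq f^{-1}([c])$ for all $m\neq 0$, i.e.\ $f^j([c])\neq [c]$ for all $j\neq 0$. Hence $f([a])\in \SS(f)$, and the same argument for $f^{-1}$ (using the symmetry just noted) gives the reverse inclusion.

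The main obstacle is the remaining \emph{simplex} property: $i([a],[a'])=0$ whenever $[a],[a']\in\SS(f)$. Unlike the previous steps, this does not follow from quantifier juggling, because two distinct essential classes could a priori still intersect each other. The standard route, and the one taken by Birman--Lubotzky--McCarthy, is to bring in Thurston's classification of mapping classes: cut $\Sigma$ along a reduction system realizing the Nielsen--Thurston decomposition of $f$, so that $f$ restricts to a pseudo-Anosov or periodic map on each component, and then identify $\SS(f)$ with the unique minimal such cutting family (the \emph{canonical reduction system}). The canonical reduction system is a simplex by construction, because its elements are pairwise disjoint cutting curves on $\Sigma$, and it is $f$-invariant for the same reason. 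The nontrivial point is the identification with $\SS(f)$: one direction, that each canonical curve is essential, is a matter of producing, for any $[b]$ with nonzero intersection, a class whose orbit escapes under iteration of $f$ (using the pseudo-Anosov dynamics on the relevant piece); the other direction, that no class outside the canonical system can be essential, is verified by exhibiting a periodic test class $[b]$ intersecting it. This dynamical input is where the full strength of Thurston's theory enters, and it is the only step that is not a direct consequence of the definition.
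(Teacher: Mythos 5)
The paper does not prove this theorem; it is quoted from Birman--Lubotzky--McCarthy with a \emph{qed} mark, so there is no internal proof to compare against. Your treatment of parts (2) and (3), and of the $f$-invariance half of part (1), is correct: the quantifier manipulations are exactly right, and the only routine detail you elide is that the image of an essential class must again be verified to be a \emph{reduction} class (e.g.\ in (3), that $g(\FF)$ is a $gfg^{-1}$-invariant simplex, and in the converse direction of (2), that the $f$-orbit of an $f^n$-reduction class is a finite $f$-invariant simplex). These are easy to supply.

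Where you go astray is the simplex property in part (1). You assert that the disjointness of two essential classes ``does not follow from quantifier juggling'' and requires the Nielsen--Thurston decomposition and the identification of $\SS(f)$ with the canonical reduction system. In fact it is a one-line consequence of the definition, and the paper records precisely this observation immediately after defining essential reduction classes: every reduction class $[a']$ lies in some finite $f$-invariant simplex, so $f$ permutes that simplex and $f^m([a'])=[a']$ for some $m\neq 0$; if $[a]$ is essential and $i([a],[a'])\neq 0$, essentiality forces $f^m([a'])\neq[a']$ for all $m\neq 0$, a contradiction. Hence any two elements of $\SS(f)$ are disjoint, and finiteness of $\SS(f)$ follows from the bound on the size of a simplex in $\CC(\Sigma,\PP)$. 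Your proposed detour through Thurston's theory is not wrong --- it is the historical route of~\cite{BiLuMc1} and proves more (e.g.\ nonvacuity of $\SS(f)$ for reducible infinite-order classes, which the statement here does not ask for) --- but as written you only sketch the two nontrivial identifications with the canonical reduction system, so your proof is incomplete at exactly the step you single out as the main content, even though that step admits an elementary argument.
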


In addition to Theorem~\ref{thm3_3} we have the following proposition, which is well-known and which is a direct consequence of~\cite{BiLuMc1} (see also~\cite[Corollaire~3.45]{Caste1}).

\begin{prop}\label{prop3_4}
Let $\Sigma$ be an oriented compact surface, and let $\PP$ be a finite collection of punctures in the interior of $\Sigma$.
Assume the Euler characteristic of $\Sigma\setminus\PP$ is negative.
Let $f_0\in Z(\MM(\Sigma,\PP))$ be a central element of $\MM(\Sigma,\PP)$, let $\FF=\{[a_1],[a_2],\dots,[a_p]\}$ be a simplex of $\CC(\Sigma,\PP)$, and let $k_1,k_2,\dots,k_p$ be non-zero integers.
Let $g=T_{a_1}^{k_1}\,T_{a_2}^{k_2}\dots T_{a_p}^{k_p}\,f_0$.
Then $\SS(g)=\FF$.\qed
\end{prop}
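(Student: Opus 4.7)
The plan is to reduce to the case of a pure multi-twist and then determine its canonical reduction system directly. Set $g' = T_{a_1}^{k_1}\,T_{a_2}^{k_2}\cdots T_{a_p}^{k_p}$, so that $g = g'\,f_0$, and $g'$ commutes with $f_0$ because $f_0$ is central.

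The first step is to observe that $f_0$ acts trivially on $\CC(\Sigma,\PP)$. Under the hypothesis $\chi(\Sigma\setminus\PP)<0$, this is a standard consequence of Ivanov's result that the kernel of the action of $\MM(\Sigma,\PP)$ on $\CC(\Sigma,\PP)$ is precisely its center. Since $f_0$ fixes every $[b]\in\CC(\Sigma,\PP)$ and commutes with $g'$, we get $g^n([b])=(g')^n f_0^n([b])=(g')^n([b])$ for all $n\in\Z$ and all $[b]$. Thus the relation ``$g^n$ fixes $[b]$'' coincides with ``$(g')^n$ fixes $[b]$'', so $\SS(g)=\SS(g')$, and it remains to prove $\SS(g')=\FF$.

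For $\FF\subseteq\SS(g')$: since $\FF$ is a simplex, pick pairwise disjoint representatives $a_1,\dots,a_p$; the twists $T_{a_i}$ then pairwise commute and each fixes every $[a_j]$, so $g'([a_j])=[a_j]$ for all $j$. For essentiality of $[a_j]$, suppose $[b]$ satisfies $i([a_j],[b])\ne 0$ and $(g')^m([b])=[b]$ for some $m\ne 0$; then $(g')^{mN}([b])=[b]$ for every $N\in\Z$. Applied to a test class $[c]$ with $i([a_j],[c])>0$, the standard intersection-number formula for disjoint Dehn twists (Farb--Margalit) shows that $i((g')^{mN}([b]),[c])$ grows linearly in $|N|$, contradicting boundedness.

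For $\SS(g')\subseteq\FF$: let $[c]\in\SS(g')$. Because $\FF\subseteq\SS(g')$ and essential reduction classes pairwise have zero intersection, $i([c],[a_j])=0$ for every $j$, so $[c]$ admits a representative lying in some component $\Sigma_\alpha$ of $\Sigma\setminus(a_1\cup\cdots\cup a_p)$. The restriction $g'|_{\Sigma_\alpha}$ is a product of Dehn twists along boundary components of $\Sigma_\alpha$; these lie in $Z(\MM(\Sigma_\alpha,\PP\cap\Sigma_\alpha))$ and therefore act trivially on $\CC(\Sigma_\alpha,\PP\cap\Sigma_\alpha)$. If $[c]\notin\FF$, then $[c]$ is generic in $\Sigma_\alpha$, so one can find $[b']\in\CC(\Sigma_\alpha,\PP\cap\Sigma_\alpha)$ with $i([c],[b'])\ne 0$, and $g'([b'])=[b']$, contradicting the essentiality of $[c]$. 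The main technical obstacle I anticipate is making this cut-and-restrict argument precise — identifying $g'|_{\Sigma_\alpha}$ as a boundary multi-twist and handling boundary-parallel classes correctly — but this becomes routine once one fixes pairwise disjoint representatives of $\FF$ and invokes the standard cutting exact sequence for mapping class groups.
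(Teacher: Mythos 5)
Your argument should be judged as a self-contained proof of a fact the paper itself does not prove (Proposition~\ref{prop3_4} is recorded with a tombstone as a known consequence of \cite{BiLuMc1}, see also \cite[Corollaire~3.45]{Caste1}). The overall structure --- discard the central factor, then prove $\FF\subseteq\SS(g')$ and $\SS(g')\subseteq\FF$ for the multitwist $g'$ --- is the standard and correct one, and the inclusion $\SS(g')\subseteq\FF$ is handled properly: disjointness from $\FF$, cutting along pairwise disjoint representatives, and the fact that $g'$ is supported in annular neighborhoods of the $a_j$ do make that step routine, including the verification that a class $[c]\notin\FF$ disjoint from $\FF$ stays generic in its complementary component. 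One small repair: the triviality of the action of $f_0$ on $\CC(\Sigma,\PP)$ should not be justified by ``kernel of the action equals the center'' --- that identity fails for several small surfaces and is stronger than needed. All you need is the inclusion of the center in the kernel, which follows in one line from $f_0\,T_b\,f_0^{-1}=T_{f_0(b)}=T_b$ together with the injectivity of $[b]\mapsto T_b$ on generic classes.

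The one genuine gap is in the essentiality step for $\FF\subseteq\SS(g')$. The Farb--Margalit intersection-number inequality for multitwists that you invoke is stated and proved under the hypothesis that the exponents $k_1,\dots,k_p$ all have the same sign, whereas the proposition must allow mixed signs --- indeed the paper applies it to $T_{\hat a_i}^{\varepsilon}\,T_d^{2p}$ with $\varepsilon=\pm1$ and $p\in\Z\setminus\{0\}$ arbitrary. For mixed signs the naive linear lower bound on $i\bigl((g')^{mN}([b]),[c]\bigr)$ is not available off the shelf, and splitting $g'$ into its positive and negative parts does not obviously rescue it. The conclusion is still true, but it requires a different justification: for instance, the relative twisting of $(g')^{N}(b)$ about $a_j$ (measured in the annular cover of $a_j$) grows like $Nk_j\,i(a_j,b)$ while the remaining twists, about curves disjoint from $a_j$, displace that projection by a uniformly bounded amount; alternatively one can fall back on the Birman--Lubotzky--McCarthy machinery, which is what the paper's citation does. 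As written, your appeal to the same-sign inequality does not cover the case actually needed.
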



\section{Proofs for endomorphisms of \texorpdfstring{$A[B_{n}]$}{A[Bn]}}\label{sec4}

\begin{proof}[Proof of Theorem~\ref{thm2_1}]
Let $n\ge5$ and $\varphi\colon A[B_{n}]\longrightarrow A[B_{n}]$ be an endomorphism. 
Pre-composing $\varphi$ with the embedding $\iota_{\tilde A}\colon A[\tilde A_{n-1}]\lhook\joinrel\longrightarrow A[B_{n}]$, and post-com\-po\-sing it with the embedding $\iota_B\colon A[B_{n}]\lhook\joinrel\longrightarrow A[A_{n}]$, we get a homomorphism $\psi\colon A[\tilde A_{n-1}]\longrightarrow A[A_{n}]$:
\[
\psi\colon A[\tilde A_{n-1}] \stackrel{\iota_{\tilde A}}{\lhook\joinrel\longrightarrow} A[B_{n}] \stackrel{\varphi}{\longrightarrow} A[B_{n}] \stackrel{\iota_{B}}{\lhook\joinrel\longrightarrow} A[A_{n}]. 
\] 
By Proposition~\ref{prop3_1}, we have one of the three possibilities:
\begin{itemize}
\item[(1)] $\psi$ is cyclic.
\item[(2)] There exist $g\in A[A_{n}]$, $k\in\{0,1\}$, $\varepsilon\in\{\pm1\}$ and $p\in\Z$ such that $\psi(t_i)=g\,t_i^\varepsilon\,\Delta^{2p}\,g^{-1}$ for all $1\le i\le n-1$, and $\psi(t_0)=g\,u_k^\varepsilon\,\Delta^{2p}\,g^{-1}$.
\item[(3)] There exist $g\in A[A_{n}]$, $k\in\{0,1\}$, $\varepsilon\in\{\pm1\}$ and $p,q\in\Z$ such that $\psi(t_i)=g\,t_i^\varepsilon\,\Delta_Y^{2p}\,\Delta^{2q}\,g^{-1}$ for all $1\le i\le n-1$, and $\psi(t_0)=g\,v_k^\varepsilon\,\Delta_Y^{2p}\,\Delta^{2q}\,g^{-1}$.
\end{itemize}
Here $Y=\{t_1,\dots,t_{n-1}\}$, $\Delta$ and $\Delta_Y$ denote the Garside elements of $A[A_n]$ and $A_Y[A_{n}]=A_Y[\tilde A_{n-1}]$, respectively (so that $\Delta^2=\Delta_B$), $u_0=t_0$, $u_1=\Delta_Y^{-1}t_0\Delta_Y$, $v_0=\rho_0 t_{n-1}\rho_0^{-1}$, $v_1=\rho_1 t_{n-1}\rho_1^{-1}$, where $\rho_0=t_1\dots t_{n-1}$ and $\rho_1=t_1^{-1}\dots t_{n-1}^{-1}$.

We analyze all three possibilities (1), (2), (3) and deduce a description of $\varphi$ in each of these cases.

Case (1): $\psi$ is cyclic. Then, since $\iota_B$ is injective, $\varphi\circ \iota_{\tilde A}\colon A[\tilde A_{n-1}]\to A[B_{n}]$ is cyclic as well. 
From the relations $t_i t_{i+1} t_i = t_{i+1} t_i t_{i+1}$ (with indices taken modulo $n$), it follows that there exists $g \in A [B_n]$ such that $\varphi (t_i) = g$ for all $i$. 
Let $h = \varphi (\rho_B)$. Since $\rho_B t_i \rho_B^{-1}  = t_{i+1}$, we see that $h g h^{-1} = \varphi (\rho_B)\, \varphi (t_i)\, \varphi(\rho_B)^{-1} = \varphi (t_{i+1}) = g$, hence $h$ commutes with $g$,
and $\varphi$ has the form: 
\[
\varphi\colon A[\tilde A_{n-1}]\rtimes \langle \rho_B\rangle\to A[B_{n}],\quad t_i\mapsto g,\quad \rho_B\mapsto h.
\]
On the other hand, any two commuting elements $g,h\in A[B_{n}]$ define such an endomorphism of $A[B_{n}]$, so we get case~(1) of Theorem~\ref{thm2_1}.

Case (2): We have $\psi(t_i)=g\,t_i^\varepsilon\,\Delta^{2p}\,g^{-1}$ for all $1\le i\le n-1$, and $\psi(t_0)=g\,u_k^\varepsilon\,\Delta^{2p}\,g^{-1}$ for some $p\in\Z$, $g\in A[A_{n}]$, $\varepsilon\in\{\pm 1\}$, $k\in\{0,1\}$. Arguing as in the proof of Theorem~2.1 of~\cite{ParSor1}, we see that $g(p_n)=p_n$, and hence $g$ belongs to $A[B_{n}]$, and we can consider $\varphi'=\conj_{g^{-1}}\circ \varphi\colon A[B_{n}]\to A[B_{n}]$, which acts on elements $t_i$ as follows:
\begin{align*}
\varphi'(t_i)&=t_i^\varepsilon\,\Delta^{2p},\quad\text{for\,\, }1\le i\le n-1,\\
\varphi'(t_0)&=u_k^\varepsilon\,\Delta^{2p}.
\end{align*}
We need to figure out how $\varphi'$ acts on $\rho_B$. Consider two cases: $k=0$ and $k=1$.

Let $k=0$. 
Then $\varphi' (t_0) = u_0^\varepsilon \Delta^{2p} = t_0^\varepsilon \Delta^{2p}$. We can write $\varphi'(\rho_B)=h\cdot\rho_B$ for some $h\in A[B_{n}]$. We are going to prove that $h\in Z(A[B_{n}])$. Indeed, the equality $\varphi'(\rho_B\, t_i\,\rho_B^{-1})=\varphi'(t_{i+1})$ reads for $0\le i\le n-1$ (with indices taken modulo $n$):
\[
h\cdot\rho_B\cdot t_i^\varepsilon \Delta^{2p}\cdot\rho_B^{-1}\cdot h^{-1} = t_{i+1}^\varepsilon\Delta^{2p},
\]
which yields
\[
h \cdot t_{i+1}^\varepsilon\cdot h^{-1}=t_{i+1}^\varepsilon, \quad \text{for}\quad 1\le i+1\le n.
\]
Hence $h$ commutes with all generators of $A[\tilde A_{n-1}]$, and the following lemma shows that $h$ belongs to the center of $A[B_{n}]$, which is generated by $\Delta_B$.

\begin{lem}\label{lem4_1}
$Z_{A[B_{n}]}(A[\tilde A_{n-1}]) = Z(A[B_{n}])$.
\end{lem}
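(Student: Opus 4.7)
The reverse inclusion $Z(A[B_n])\subseteq Z_{A[B_n]}(A[\tilde A_{n-1}])$ is immediate, so I focus on the forward one. Exploiting the semidirect product decomposition $A[B_n]=A[\tilde A_{n-1}]\rtimes\langle\rho_B\rangle$ recalled in Section~\ref{sec2}, any $x$ in the centralizer can be written uniquely as $x=h\,\rho_B^k$ with $h\in A[\tilde A_{n-1}]$ and $k\in\Z$. Using the defining relation $\rho_B\,t_i\,\rho_B^{-1}=t_{i+1}$ (indices modulo $n$), the commutation $x\,t_i\,x^{-1}=t_i$ becomes $h\,t_{i+k}\,h^{-1}=t_i$ for every $i$. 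Equivalently, the inner automorphism $\conj_{h^{-1}}$ of $A[\tilde A_{n-1}]$ coincides on generators, hence on the whole group, with the diagram rotation $\sigma^k$, where $\sigma\colon t_i\mapsto t_{i+1}$ is the automorphism of $A[\tilde A_{n-1}]$ induced by $\conj_{\rho_B}$.

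The crux is the claim that $\sigma$ has order exactly $n$ in $\Out(A[\tilde A_{n-1}])$, so that an inner automorphism of the form $\sigma^k$ can occur only when $n\mid k$. Granting this, from $\sigma^k=\conj_{h^{-1}}$ we deduce $n\mid k$; then $\sigma^k=\id$ and the equation reduces to $\conj_{h^{-1}}=\id$, forcing $h^{-1}\in Z(A[\tilde A_{n-1}])=\{1\}$ (the triviality of this center was recalled in Section~\ref{sec2}, following~\cite{ChaPei1}). Hence $h=1$ and $x=\rho_B^k=\Delta_B^{k/n}\in Z(A[B_n])$, which completes the proof.

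The main obstacle is thus the outer-faithfulness statement for $\sigma$. I would invoke the computation of $\Aut(A[\tilde A_{n-1}])$ by Charney--Crisp~\cite{ChaCri1}, which exhibits the full dihedral group of diagram automorphisms of $\tilde A_{n-1}$ as a splitting complement of $\Inn(A[\tilde A_{n-1}])$ in $\Aut(A[\tilde A_{n-1}])$; the cyclic rotation subgroup of that dihedral group then has order exactly $n$ in $\Out$. A self-contained alternative is to project the equation $\sigma^k=\conj_{h^{-1}}$ along $A[\tilde A_{n-1}]\to W[\tilde A_{n-1}]$ and invoke the extended affine Weyl group of~\cite[Ch.~VI]{Bourb1}, which contains $W[\tilde A_{n-1}]$ as a normal index-$n$ subgroup whose cyclic quotient is generated by a rotation element implementing $\sigma$ via conjugation; this immediately forces $n\mid k$.
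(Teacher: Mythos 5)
Your argument is correct, but it routes the key step through machinery the paper avoids. After the common reduction to $h\,t_{i+k}\,h^{-1}=t_i$ for all $i$, you identify $\conj_{h^{-1}}$ with the rotation $\sigma^k$ and then need the nontrivial input that $\sigma$ has order exactly $n$ in $\Out(A[\tilde A_{n-1}])$, which you obtain either from Charney--Crisp's computation of $\Aut(A[\tilde A_{n-1}])$ or from the extended affine Weyl group. Both routes work (for the second, $Z_{\widetilde W}(W[\tilde A_{n-1}])=1$ and $\Omega\cap W[\tilde A_{n-1}]=1$ indeed force $n\mid k$), though your description of the first is slightly off: the dihedral group of diagram automorphisms is not the \emph{full} splitting complement of $\Inn(A[\tilde A_{n-1}])$ --- the inversion $t_i\mapsto t_i^{-1}$ also contributes --- but the injectivity of the dihedral group into $\Out$, which is all you need, does follow from their theorem. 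The paper's proof is more elementary and self-contained: from $h^{-1}t_i h=t_{i+k}$ it simply iterates $n$ times to get $h^{-n}t_i h^{n}=t_{i+kn}=t_i$, so $h^{n}$ lies in the (trivial) center of $A[\tilde A_{n-1}]$; torsion-freeness then gives $h=1$, and $t_{i+k}=t_i$ forces $n\mid k$. This uses only the triviality of $Z(A[\tilde A_{n-1}])$ and torsion-freeness, with no appeal to $\Out(A[\tilde A_{n-1}])$ or to Weyl groups; it also better fits the paper's goal of rederiving the Charney--Crisp automorphism results rather than quoting them. Your approach buys a conceptual reformulation (the lemma is essentially the faithfulness of $\Z/n$ on $A[\tilde A_{n-1}]$ in $\Out$), at the cost of heavier external input.
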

\begin{proof}
Let $z\in Z_{A[B_{n}]}(A[\tilde A_{n-1}])$. Then $z$ can be written according to the semidirect structure of $A[B_{n}]$ as $z=h\cdot\rho_B^k$ for some $h\in A[\tilde A_{n-1}]$ and $k\in\Z$. Since $z t_i z^{-1} = t_i$ for all $0\le i\le n-1$, we have
\[
t_i=h\cdot\rho_B^k\cdot t_i\cdot \rho_B^{-k}\cdot h^{-1}=
h\cdot t_{i+k}\cdot h^{-1},
\]
or $h^{-1}\cdot t_i\cdot h=t_{i+k}$. In particular, $h^{-n}\cdot t_i\cdot h^{n}=t_{i+kn}=t_i$ for all $i$. Hence $h^{n}$ belongs to the center of $A[\tilde A_{n-1}]$, which is trivial. Since $A[\tilde A_{n-1}]$ is torsion free, we conclude that $h=1$, $z=\rho_B^k$ and $t_{i+k}=t_i$ for all $i$. This means that $k$ is a multiple of $n$, and hence $z\in\langle \rho_B^{n}\rangle=Z(A[B_{n}])$.
\end{proof}

We conclude that in Case (2) with $k=0$, the homomorphism $\varphi'$ has the form:
\begin{align*} 
\begin{split}
\varphi'(t_i)&=t_i^\varepsilon\Delta^{2p}=t_i^\varepsilon\Delta_B^{p},\quad\text{for\,\, }0\le i\le n-1,\\
\varphi'(\rho_B)&=\rho_B\Delta^{2q}=\rho_B\Delta_B^{q},\,\,\text{ for some\,\, }p,q\in \Z.
\end{split}
\end{align*}
and this gives us the endomorphism of type (2a) of Theorem~\ref{thm2_1}.
 
Let $k=1$ now. Then for $1\le i\le n-1$ we can write $t_i=\Delta_Y^{-1\,}t_{n-i}\,\Delta_Y$. Then
\begin{align*}
\varphi'(t_i)&=\Delta_Y^{-1}\,t_{n-i}^\varepsilon\, \Delta_Y\cdot\Delta^{2p},\quad\text{for\,\, }1\le i\le n-1,\\
\varphi'(t_0)&=u_1^\varepsilon\cdot\Delta^{2p}=\Delta_Y^{-1}\,t_0^\varepsilon\, \Delta_Y\cdot\Delta^{2p}.
\end{align*}
Denote $\varphi''=\conj_{\Delta_Y}\circ \varphi'$. Then 
\begin{align*}
\varphi''(t_i)&=t_{n-i}^\varepsilon\cdot\Delta^{2p},\quad\text{for\,\, }1\le i\le n-1,\\
\varphi''(t_0)&=t_0^\varepsilon\cdot\Delta^{2p}.
\end{align*}
Let us write $\varphi''(\rho_B)=h\cdot \rho_B^{-1}$ for some $h\in A[B_{n}]$. Then the equality $\varphi''(\rho_B\, t_i\, \rho_B^{-1})=\varphi''(t_{i+1})$ yields: $h\rho_B^{-1}\cdot t_{n-i}^\varepsilon\cdot \rho_Bh^{-1}=t_{n-1-i}^\varepsilon$ for all $i$, which means that $h\cdot t_{n-1-i}^\varepsilon\cdot h^{-1}=t_{n-1-i}^\varepsilon$ for all $0\le i\le n-1$. Therefore $h\in Z_{A[B_{n}]}(A[\tilde A_{n-1}])$, and, by Lemma~\ref{lem4_1}, $h=\Delta^{2q}$ for some $q\in\Z$, so that $\varphi''(\rho_B)=\rho_B^{-1}\,\Delta^{2q}=\rho_B^{-1}\,\Delta_B^q$, and we get an endomorphism of type (2b) of Theorem~\ref{thm2_1}.

Case (3): We have $\psi(t_i)=g\,t_i^\varepsilon\,\Delta_Y^{2p}\,\Delta^{2q}\,g^{-1}$ for all $1\le i\le n-1$, and $\psi(t_0)=g\,v_k^\varepsilon\,\Delta_Y^{2p}\,\Delta^{2q}\,g^{-1}$ for some $p,q\in\Z$, $g\in A[A_{n}]$, $\varepsilon\in\{\pm 1\}$, $k\in\{0,1\}$. Again, arguing as in the proof of Theorem~2.1 of~\cite{ParSor1}, we see that $g$ belongs to $A[B_{n}]$, and we can consider $\varphi'=\conj_{g^{-1}}\circ \varphi\colon A[B_{n}]\to A[B_{n}]$, which acts on elements $t_i$ as follows:
\begin{align*}
\varphi'(t_i)&=t_i^\varepsilon\,\Delta_Y^{2p}\,\Delta^{2q},\quad\text{for\,\, }1\le i\le n-1,\\
\varphi'(t_0)&=v_k^\varepsilon\,\Delta_Y^{2p}\,\Delta^{2q}.
\end{align*}
To determine $\varphi'(\rho_B)$ we write $\varphi'(\rho_B)=h\cdot\rho_k$ for some $h\in A[B_{n}]$. Then the condition $\varphi'(\rho_B t_i\rho_B^{-1})=\varphi'(t_{i+1})$ gives us: 
\begin{align*}
h\rho_k&\cdot t_i^\varepsilon\Delta_Y^{2p}\cdot\rho_k^{-1} h^{-1}=t_{i+1}^\varepsilon\cdot\Delta_Y^{2p},\quad\text{for\,\, $1\le i\le n-2$},\\
h\rho_k&\cdot \rho_k t_{n-1}^\varepsilon\rho_k^{-1}\Delta_Y^{2p}\cdot\rho_k^{-1} h^{-1}=t_{1}^\varepsilon\cdot\Delta_Y^{2p}.
\end{align*}
Notice that $\rho_k$ commutes with $\Delta^2_Y$, hence the above equalities read:
\begin{align*}
h\, t_{i+1}^\varepsilon\, h^{-1}&\cdot h\,\Delta_Y^{2p}\,h^{-1}=t_{i+1}^\varepsilon\cdot \Delta_Y^{2p},\quad\text{for\,\, $2\le i+1\le n-1$},\\
h\, t_1^\varepsilon\, h^{-1}&\cdot h\,\Delta_Y^{2p}\,h^{-1}=t_{1}^\varepsilon\cdot\Delta_Y^{2p}.
\end{align*}
Recall that elements $t_i$ ($1\le i\le n-1$) can be viewed as half-twists about arcs $a_i$ depicted in Figure~\ref{fig3_2}, and we denote as before by $\hat a_i$ the boundary of a regular neighborhood of $a_i$. Recall also that $\Delta_Y^2=T_d$ is the Dehn twist about the circle $d$, depicted in Figure~\ref{fig3_3}. We now consider the sets of essential reduction classes for the elements defined by the equalities above, viewing them as mapping classes from $\MM(\D,\PP_{n+1})$. Recall that, in our notation, $\SS(f)$ denotes the set of all essential reduction classes for a mapping class $f$.

If $p=0$, we have 
\[
h\, t_i^\varepsilon\, h^{-1}=t_{i}^\varepsilon,\quad \text{for }1\le i\le n-1,
\]
and we notice that $\SS(t_i^\varepsilon)=\SS(t_i^{2\varepsilon})$ by Theorem~\ref{thm3_3}(2). Since $t_i^2$ is the Dehn twist $T_{\hat a_i}$ about $\hat a_i$, we conclude that $\SS(t_i^{2\varepsilon})=\{[\hat a_i]\}$ by Proposition~\ref{prop3_4}, for $1\le i\le n-1$. By Theorem~\ref{thm3_3}(3), we see that $\SS(h\, t_i^\varepsilon\,h^{-1})=h\SS(t_i^\varepsilon)=\{h[\hat a_i]\}$, and hence $h[\hat a_i]=[\hat a_i]$ for all $1\le i\le n-1$.

If $p\ne0$, we have 
\[
h\, t_{i}^\varepsilon\, \Delta_Y^{2p}\,h^{-1}=t_{i}^\varepsilon\, \Delta_Y^{2p},\quad\text{for\,\, $1\le i\le n-1$},
\]
and we notice that $\SS(t_i^\varepsilon\Delta_Y^{2p})=\SS(t_i^{2\varepsilon}\Delta_Y^{4p})$ by Theorem~\ref{thm3_3}(2). Since $t_i^{2\varepsilon}\Delta_Y^{4p}=T_{\hat a_i}^\varepsilon T_d^{2p}$, Proposition~\ref{prop3_4} tells us that $\SS(t_i^{2\varepsilon}\Delta_Y^{4p})=\{[\hat a_i],[d]\}$. Now, the above equality 
implies by Theorem~\ref{thm3_3}(3) that $h\{[\hat a_i],[d]\}=\{[\hat a_i],[d]\}$ for all $1\le i\le n-1$. Since $d$ encloses more punctures than any of the circles $\hat a_i$, we conclude that $h[d]=[d]$ and $h[\hat a_i]=[\hat a_i]$ for $1\le i\le n-1$, as in the case $p=0$ above.

Let $\D_i\subset \D$ denote the disk bounded by the circle $\hat a_i$. Since $h[\hat a_i]=[\hat a_i]$ for each $i$, we can choose a representative $H_i\in\Homeo^+(\D,\PP_{n+1})$ of the mapping class $h$ such that 
$H_i (\D_i) = \D_i$ and $H_i|_{\partial \D_i} = \id_{\partial \D_i}$.
Then $[H_i|_{\D_i}]$ belongs to $\MM(\D_i,\PP_2)$, which is a cyclic group generated by the half-twist $H_{a_i}=t_i$. Thus, $[H_i|_{\D_i}]$ commutes with $t_i$, and hence $h$ commutes with elements $t_i$ for all $1\le i\le n-1$. By~\cite[Theorem~1.1]{Paris2}, $h\in\langle \Delta_Y^2,\Delta^2\rangle$, so that $\varphi'(\rho_B)=\Delta_Y^{2r}\Delta^{2s}\rho_k=\rho_k\Delta_Y^{2r}\Delta_B^{s}$, for some $r,s\in \Z$. Therefore $\varphi'$ is an endomorphism of type~(3) of Theorem~\ref{thm2_1}.
\end{proof}

\begin{proof}[Proof of Proposition~\ref{prop2_1}]
The type of an endomorphism $\varphi$ from Theorem~\ref{thm2_1} can be characterized by the following properties, according to Corollary~\ref{cor2_4} (which we are going to prove later in this section, without reliance on the proof of Proposition~\ref{prop2_1}): if $\varphi$ is of type (1), then the image of $\varphi$ is abelian and $\varphi$ is non-injective; if $\varphi$ is of type (2a) or (2b), then $\varphi$ is injective; and if $\varphi$ is of type (3), then the image of $\varphi$ is non-abelian and $\varphi$ is non-injective. Notice that if $\varphi=\conj_x\circ\psi$, then $\psi$ has the same classification according to these properties as $\varphi$, since $\conj_x$ is an automorphism. Thus, it remains to distinguish types (2a) and (2b) up to conjugation, and also to prove that parameters $\varepsilon,k,p,q,r,s$ are determined uniquely. For that, consider a homomorphism 
\[
\eta\colon A[B_n]=A[\tilde A_{n-1}]\rtimes\langle\rho_B\rangle\longrightarrow\Z^2,\quad t_i\longmapsto (1,0),\quad \rho_B\longmapsto (0,1).
\]
Notice that $\eta\circ\conj_x\circ\varphi = \eta\circ\varphi$ for any $\varphi$ and $x$, since the image of $\eta$ is an abelian group. Thus it will suffice to show that for an arbitrary endomorphism $\varphi$ of type (2a), (2b) or (3) of Theorem~\ref{thm2_1}, its type and the corresponding parameters are uniquely determined by the composition $\eta\circ\varphi$.

Observe that $\eta(\Delta_B)=\eta(\rho_B^n)=(0,n)$ and $\eta(\Delta_Y^2)=(n(n-1),0)$. Notice also that $\eta(v_k)=\eta(\conj_{\rho_k}(t_{n-1}))=\eta(t_{n-1})=(1,0)$. Hence we have the following formulas for $\eta\circ\varphi$ for different types of $\varphi$ ($0\le i\le n-1$):
\[
\begin{aligned}
\text{type (2a):\qquad}\eta\circ\varphi:&\quad t_i\longmapsto(\varepsilon,pn),\quad \rho_B\longmapsto(0,qn+1);\\
\text{type (2b):\qquad}\eta\circ\varphi:&\quad t_i\longmapsto(\varepsilon,pn),\quad \rho_B\longmapsto(0,qn-1);\\
\text{type (3), $k=0$:\qquad}\eta\circ\varphi:&\quad t_i\longmapsto(\varepsilon+pn(n-1),qn),\quad \rho_B\longmapsto
(n-1+rn(n-1),sn);\\
\text{type (3), $k=1$:\qquad}\eta\circ\varphi:&\quad t_i\longmapsto(\varepsilon+pn(n-1),qn),\quad \rho_B\longmapsto
(-(n-1)+rn(n-1),sn).
\end{aligned}
\]
In particular, we see that an endomorphism $\varphi$ cannot belong to both types (2a) and (2b). Indeed, assume that $\varphi$ corresponds simultaneously to parameters $q=q_1$ and $q=q_2$, i.e.\ $\varphi(\rho_B)=\rho_B\Delta_B^{q_1}=x\rho_B^{-1}\Delta_B^{q_2}x^{-1}$ for some $x\in A[B_n]$, then we have from the above formulas that $q_1n+1=q_2n-1$, which is equivalent to $(q_2-q_1)n=2$. This equality is impossible since $n>2$. It is also clear that parameters $\varepsilon,p,q$ are uniquely determined in cases (2a) and (2b).

Similarly, $\varphi$ cannot simultaneously belong to type (3) with both values of $k=0,1$. Indeed, if this is the case, then for some $r_1,r_2$ we have $n-1 + r_1 n(n-1)=-(n-1)+r_2n(n-1)$, which is equivalent to $(r_2-r_1)n=2$. The latter equality is impossible since $n>2$.

We see in the same way that parameters $\varepsilon$ and $p$ are determined uniquely for $\varphi$ of type (3). Indeed, suppose that there are some $\varepsilon_1,\varepsilon_2\in\{\pm1\}$ and some $p_1,p_2\in \Z$ such that $\varepsilon_1+p_1n(n-1)=\varepsilon_2+p_2n(n-1)$, which is equivalent to $(p_2-p_1)n(n-1)=\varepsilon_1-\varepsilon_2$, with $\varepsilon_1-\varepsilon_2\in\{0,\pm2\}$. Since $n>2$, this equality is only possible if $p_2-p_1=\varepsilon_1-\varepsilon_2=0$.

Similarly, parameter $r$ is determined uniquely for $\varphi$ of type (3). Indeed, assume that for some $r_1,r_2$ we have $\pm(n-1)+r_1n(n-1)=\pm(n-1)+r_2n(n-1)$ (with the same choice of signs on both sides). Then if follows that $r_1=r_2$. Also, it is clear that parameters $q$ and $s$ are determined uniquely for $\varphi$ of type (3).
\end{proof}

\begin{proof}[Proof of Corollary~\ref{cor2_3}]
Assume that we have an endomorphism (1) of Theorem~\ref{thm2_1}, i.e.\ there exist $g,h\in A[B_{n}]$ such that $gh=hg$, $\varphi(t_i)=g$ for $0\le i\le n-1$, and $\varphi(\rho_B)=h$. To write $\varphi$ in the standard generators $r_1,\dots, r_{n-1}, r_{n}$ of $A[B_{n}]$, we observe that $t_i=r_i$, $1\le i\le n-1$, $\rho_B=t_1\dots t_{n-1} r_{n}$, and $\varphi(\rho_B)=g^{n-1}\varphi(r_{n})=h$. Then $\varphi(r_{n})=g^{-n+1}h$ commutes with $g$. On the other hand, an arbitrary element $h$ of $A[B_{n}]$ commuting with $g$ defines an endomorphism $\varphi$ by setting $\varphi(r_i)=g$ and $\varphi(r_{n})=h$, which is an endomorphism of type (I).

Assume now that we have an endomorphism of type (2a) with $\varepsilon=1$ in Theorem~\ref{thm2_1}, i.e.\ $\varphi(t_i)=t_i\Delta_B^p$ for $0\le i\le n-1$ and $\varphi(\rho_B)=\rho_B\Delta_B^q$ for some $p,q\in\Z$. Then $\varphi(r_i)=r_i\Delta_B^{p}$ for $1\le i\le n-1$, and $\varphi(\rho_B)=\varphi(r_1)\dots\varphi(r_{n-1})\varphi(r_{n})=r_1\dots r_{n-1}\varphi(r_{n})\Delta_B^{p(n-1)}$, and hence
$r_1r_2\dots r_{n-1}\varphi(r_{n})\Delta_B^{p(n-1)}=r_1\dots r_{n-1}r_{n}\Delta_B^{q}$, which gives $\varphi(r_{n})=r_{n}\Delta_B^{q-p(n-1)}$. Since $q\in\Z$ can be arbitrary, we get an endomorphism of case (IIa) with $\varepsilon=1$.

For an endomorphism of type (2a) with $\varepsilon=-1$ we have: $\varphi(t_i)=t_i^{-1}\Delta_B^p$, for $0\le i\le n-1$, and $\varphi(\rho_B)=\rho_B\Delta_B^q$ for some $p,q\in\Z$. The last equality gives us:
\[
\varphi(\rho_B)=\varphi(r_1r_2\dots r_{n-1}r_{n})=r_1^{-1}r_2^{-1}\dots r_{n-1}^{-1}\varphi(r_{n})\Delta_B^{p(n-1)}=r_1r_2\dots r_{n-1}r_{n}\Delta_B^q,
\]
which is equivalent to $\varphi(r_{n})=r_{n-1}r_{n-2}\dots r_2\,(r_1^2)\,r_2\dots r_{n-1} r_{n}\Delta_B^{q-p(n-1)}=\delta\cdot r_{n}\Delta_B^{q-p(n-1)}$. Since $q\in\Z$ can be arbitrary, we get a description of an endomorphism of type (IIb) with $\varepsilon=-1$.

To analyze endomorphisms of type (2b), it will be more convenient to represent them differently. An endomorphism of type (2b) has the form: $\varphi(t_i)=t_{n-i}^\varepsilon\Delta_B^p$ and $\varphi(\rho_B)=\rho_B^{-1}\Delta_B^q$, for some $p,q\in\Z$. Notice that $\Delta_Y^{-1}t_{n-i}\Delta_Y=t_i$ for all $0\le i\le n-1$, if we consider indices $i$ modulo $n$. Let $\varphi'=\conj_{\Delta_Y^{-1}}\circ \varphi$, then the homomorphism $\varphi'$ can be written as:
\begin{align*} 
\begin{split}
\varphi'(t_i)&=t_i^\varepsilon\cdot\Delta_B^{p},\quad\text{for\,\, }0\le i\le n-1,\\
\varphi'(\rho_B)&=(\Delta_Y^{-1}\rho_B^{-1}\Delta_Y)\cdot\Delta_B^{q},\,\,\text{ for some\,\, }p,q\in \Z.
\end{split}
\end{align*}
We need the following lemma.

\begin{lem}\label{lem4_2}
$\Delta_Y^{-1}\rho_B\Delta_Y = r_{n}r_{n-1}\dots r_1$.
\end{lem}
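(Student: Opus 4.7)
The plan is to split $\rho_B = (r_1 r_2 \dots r_{n-1}) \cdot r_n$ and handle each factor separately, since $r_1, \dots, r_{n-1}$ lie in $A_Y[\tilde A_{n-1}]$, on which $\Delta_Y$ acts by the reversal $r_i \mapsto r_{n-i}$, while $r_n$ requires a more delicate treatment. For the first factor, the identity $\Delta_Y r_i \Delta_Y^{-1} = r_{n-i}$ for $1 \le i \le n-1$ (noted in the remark just preceding this lemma) gives
\[
\Delta_Y^{-1}(r_1 r_2 \dots r_{n-1}) \Delta_Y = r_{n-1} r_{n-2} \dots r_1.
\]

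The heart of the argument is computing $\Delta_Y^{-1} r_n \Delta_Y$. For this I would use the alternative Garside factorization
\[
\Delta_Y = r_1 (r_2 r_1)(r_3 r_2 r_1) \dots (r_{n-1} r_{n-2} \dots r_1) = \Delta_{Y'} \cdot (r_{n-1} r_{n-2} \dots r_1),
\]
where $Y' = \{r_1, \dots, r_{n-2}\}$ and $\Delta_{Y'}$ is the Garside element of the parabolic subgroup $A_{Y'} \simeq A[A_{n-2}]$. The crucial observation is that in the Coxeter graph $B_n$ the vertex $n$ is adjacent only to $n-1$, so $r_n$ commutes with each of $r_1, \dots, r_{n-2}$, and therefore with $\Delta_{Y'}$. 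This collapses the inner conjugation to
\[
\Delta_Y^{-1} r_n \Delta_Y = (r_{n-1} \dots r_1)^{-1}\, \Delta_{Y'}^{-1} r_n \Delta_{Y'} \,(r_{n-1} \dots r_1) = (r_{n-1} \dots r_1)^{-1}\, r_n\, (r_{n-1} \dots r_1).
\]

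Multiplying the two pieces gives
\[
\Delta_Y^{-1} \rho_B \Delta_Y = (r_{n-1} \dots r_1) \cdot (r_{n-1} \dots r_1)^{-1}\, r_n\, (r_{n-1} \dots r_1) = r_n r_{n-1} \dots r_1,
\]
which is the desired identity. The only real obstacle is recognizing \emph{which} factorization of $\Delta_Y$ to use: the one that isolates the sub-Garside element $\Delta_{Y'}$ sitting inside $\langle r_1, \dots, r_{n-2}\rangle$, the parabolic subgroup that centralizes $r_n$. Once that observation is in hand, the rest is a routine cancellation, and nothing beyond the reversal action of $\Delta_Y$ on its defining generators and the elementary commutation relations of $B_n$ is needed.
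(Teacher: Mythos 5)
Your proof is correct and rests on the same two key ingredients as the paper's: the factorization $\Delta_Y=\Delta_{Y'}\cdot(r_{n-1}\dots r_1)$ isolating the Garside element of the parabolic subgroup $\langle r_1,\dots,r_{n-2}\rangle$, and the fact that $r_n$ commutes with that subgroup. The only cosmetic difference is that you conjugate the two factors $r_1\dots r_{n-1}$ and $r_n$ of $\rho_B$ separately (invoking the reversal $\Delta_Y^{-1}r_i\Delta_Y=r_{n-i}$ for the first), whereas the paper reduces everything to the single identity $\Delta_{Y_1}^{-1}r_n\Delta_{Y_1}=r_n$ by cancelling $\rho_0$ and $\rho_1$ directly against the product expressions for $\Delta_Y^{\pm1}$.
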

\begin{proof}
Notice that $\rho_B=r_1\dots r_{n-1}r_{n}=\rho_0\cdot r_{n}$ and $r_{n}r_{n-1}\dots r_1=r_{n}\cdot(\rho_1)^{-1}$, hence we need to show that
\[
(\Delta_Y^{-1}\rho_0)\cdot r_{n}\cdot(\Delta_Y \rho_1)=r_{n}.
\]
Recall that $\Delta_Y=(r_1r_2\dots r_{n-1})(r_1r_2\dots r_{n-2})\dots(r_1r_2)r_1=r_1(r_2r_1)(r_3r_2r_1)\dots(r_{n-1}r_{n-2}\dots r_2r_1)$. Denote $Y_1=\{r_1,\dots,r_{n-2}\}$, and $\Delta_{Y_1}=r_1(r_2r_1)\dots(r_{n-2}\dots r_2r_1)$. We see that 
\[
\Delta_Y\cdot\rho_1=r_1(r_2r_1)\dots(r_{n-2}\dots r_2r_1)(r_{n-1}r_{n-2}\dots r_2r_1)\cdot \rho_1=r_1(r_2r_1)\dots(r_{n-2}\dots r_2r_1)=\Delta_{Y_1},
\]
and 
\[
\Delta^{-1}_Y\cdot\rho_0=r_1^{-1}(r_2^{-1}r_1^{-1})\dots(r_{n-2}^{-1}\dots r_2^{-1}r_1^{-1})(r_{n-1}^{-1}r_{n-2}^{-1}\dots r_2^{-1}r_1^{-1})\cdot \rho_0=\Delta_{Y_1}^{-1}. 
\]
Hence, 
\[
(\Delta_Y^{-1}\rho_0)\cdot r_{n}\cdot(\Delta_Y \rho_1)=\Delta_{Y_1}^{-1}\cdot r_{n}\cdot\Delta_{Y_1}=r_{n},
\]
since $r_{n}$ commutes with all generators from $\Delta_{Y_1}$.
\end{proof}

We conclude by Lemma~\ref{lem4_2}, that an endomorphism of case (2b) may be written as follows:
\begin{align*} 
\begin{split}
\varphi'(t_i)&=t_i^\varepsilon\cdot\Delta_B^{p},\quad\text{for\,\, }0\le i\le n-1,\\
\varphi'(\rho_B)&=r_1^{-1}\dots r_{n-1}^{-1}r_{n}^{-1}\cdot\Delta_B^{q},\,\,\text{ for some\,\, }p,q\in \Z.
\end{split}
\end{align*}

Thus in case (2b) with $\varepsilon=1$, we have:
\[
\varphi'(\rho_B)=\varphi'(r_1)\varphi'(r_2)\dots \varphi'(r_{n-1})\varphi'(r_{n})=r_1r_2\dots r_{n-1}\varphi'(r_{n})\Delta_B^{p(n-1)}=r_1^{-1}\dots r_{n-1}^{-1}r_{n}^{-1}\Delta_B^q,
\]
which is equivalent to 
\[
\varphi'(r_{n})=r_{n-1}^{-1}\dots r_2^{-1}(r_1^{-2})r_2^{-1}\dots r_{n-1}^{-1}\cdot r_{n}^{-1}\Delta_B^{q-p(n-1)}=\delta^{-1}\cdot r_{n}^{-1}\Delta_B^{q-p(n-1)}\,.
\]
Since $q\in\Z$ is arbitrary, we get an endomorphism of case (IIb) with $\varepsilon=1$.

Similarly, in case (2b) with $\varepsilon=-1$, we get:
\[
\varphi'(\rho_B)=\varphi'(r_1)\varphi'(r_2)\dots \varphi'(r_{n-1})\varphi'(r_{n})=r_1^{-1}r_2^{-1}\dots r_{n-1}^{-1}\varphi'(r_{n})\Delta_B^{p(n-1)}=r_1^{-1}\dots r_{n-1}^{-1}r_{n}^{-1}\Delta_B^q,
\]
which is equivalent to $\varphi'(r_{n})=r_{n}^{-1}\cdot\Delta_B^{q-p(n-1)}$. Since $q\in\Z$ is arbitrary, we get an endomorphism of case (IIa) with $\varepsilon=-1$.

Assume now that we have an endomorphism of type (3) with $k=0$ and $\varepsilon=1$ in Theorem~\ref{thm2_1}, i.e.\ $\varphi (t_i)=t_i\Delta_Y^{2p}\Delta_B^q=r_i\Delta_Y^{2p}\Delta_B^q$, for $1\le i\le n-1$, and $\varphi(\rho_B)=\rho_0\Delta_Y^{2r}\Delta_B^s=r_1r_2\dots r_{n-1}\Delta_Y^{2r}\Delta_B^s$ for some $p,q,r,s\in\Z$. Since $\rho_B=r_1\dots r_{n-1} r_{n}$, we see that
\[
\varphi (\rho_B)=\varphi (r_1)\dots \varphi (r_{n-1})\varphi (r_{n})= r_1 r_2\dots r_{n-1}\Delta_Y^{2p(n-1)}\Delta_B^{q(n-1)}\varphi (r_{n})=r_1r_2\dots r_{n-1}\Delta_Y^{2r}\Delta_B^s,
\]
from what we deduce that $\varphi (r_{n})=\Delta_Y^{2r-2p(n-1)}\Delta_B^{s-q(n-1)}$. Since $r$ and $s$ are arbitrary integers, we get an endomorphism of type (IIIa) with $\varepsilon=1$. Similarly, an endomorphism of type (3) with $k=1$ and $\varepsilon=-1$ of Theorem~\ref{thm2_1} yields an endomorphism of type (IIIa) with $\varepsilon=-1$.

An endomorphism of type (3) with $k=0$ and $\varepsilon=-1$ in Theorem~\ref{thm2_1} is given by the formulas: $\varphi(t_i)=t_i^{-1}\Delta_Y^{2p}\Delta_B^q=r_i^{-1}\Delta_Y^{2p}\Delta_B^q$, for $1\le i\le n-1$, and $\varphi (\rho_B)=\rho_0\Delta_Y^{2r}\Delta_B^s=r_1r_2\dots r_{n-1}\Delta_Y^{2r}\Delta_B^s$ for some $p,q,r,s\in\Z$. On the other hand, 
\[
\varphi (\rho_B)=\varphi (r_1)\dots\varphi (r_{n-1})\varphi (r_{n})=r_1^{-1}\dots r_{n-1}^{-1}\Delta_Y^{2p(n-1)}\Delta_B^{q(n-1)}\varphi (r_{n}), 
\]
from what we get that 
\[
\varphi (r_{n})=r_{n-1} r_{n-2}\dots r_2(r_1^2)r_2\dots r_{n-2}r_{n-1}\Delta_Y^{2r-2p(n-1)}\Delta_B^{s-q(n-1)}=\delta\cdot\Delta_Y^{2r-2p(n-1)}\Delta_B^{s-q(n-1)}.
\]
Since $r$ and $s$ are arbitrary integers, we get an endomorphism of type (IIIb) with $\varepsilon=-1$.

In exactly the same manner, an endomorphism of type (3) with $k=1$ and $\varepsilon=1$ in Theorem~\ref{thm2_1} yields an automorphism of type (IIIb) with $\varepsilon=1$. This finishes the proof of Corollary~\ref{cor2_3}.
\end{proof}

\begin{proof}[Proof of Corollary~\ref{cor2_4}]
The endomorphisms of type (1) of Theorem~\ref{thm2_1} are neither injective nor surjective, since their image is an abelian subgroup of $A[B_{n}]$.
 
The endomorphisms of type (3) are not injective. To see this we notice by direct check that if $k=0$ and $\varepsilon=1$ or if $k=1$ and $\varepsilon=-1$ we have $\varphi(t_0)=\varphi(v_0)$. Similarly, if $k=0$ and $\varepsilon=-1$ or if $k=1$ and $\varepsilon=1$ we have $\varphi(t_0)=\varphi(v_1)$. Since $t_0$, $v_0$ and $v_1$ are half-twists about pairwise non-isotopic arcs $b_0$, $c_0$, and $c_1$, respectively, in $\MM(\D,\PP_{n+1})$ (see Figure~\ref{fig3_3}), we conclude by Corollary~3.6 of~\cite{ParSor1} that $t_0$, $v_0$ and $v_1$ are three distinct elements of $\MM(\D,\PP_{n+1})$, and hence $\varphi$ is not injective.

To see that an endomorphism $\varphi$ of type (3) is not surjective, we show that its composition $\pi\circ\varphi$ with the natural epimorphism by the center $\pi\colon A[B_{n}]\to A[B_{n}]/Z(A[B_{n}])$ is not surjective. Indeed, recall that $A[B_{n}]=A[\tilde A_{n-1}]\rtimes \langle \rho_B\rangle$, with the center $Z(A[B_{n}])$ generated by $\Delta_B=\rho_B^{n}$, and thus $Z(A[B_{n}])\cap A[\tilde A_{n-1}]=1$. Hence
$A[B_{n}]/Z(A[B_{n}])=A[\tilde A_{n-1}]\rtimes \langle \bar\rho_B\rangle$, where $\bar\rho_B=\pi(\rho_B)$. We observe that in case (3) the image $\Im(\pi\circ\varphi)$ lies entirely in the $A[\tilde A_{n-1}]$ factor of $A[\tilde A_{n-1}]\rtimes \langle \bar\rho_B\rangle$, and so it does not contain $\bar\rho_B$.

To analyze endomorphisms $\varphi$ of types (2a) and (2b), we observe that they are transvections of certain automorphisms, i.e.\ such an endomorphism has form: $\varphi(x)=\phi(x)\cdot\lambda(x)$ for any $x\in A[B_{n}]$, where $\phi$ is an automorphism of $A[B_{n}]$ and $\lambda\colon A[B_{n}]\to Z(A[B_{n}])$ is a homomorphism, defined as follows:
\[
\text{(2a):\quad}
\begin{aligned}
\phi(t_i)&=t_i^\varepsilon\,,\\
\phi(\rho_B)&=\rho_B\,,
\end{aligned}
\qquad\qquad\text{(2b):\quad}
\begin{aligned}
\phi(t_i)&=t_{n-i}^\varepsilon\,,\\
\phi(\rho_B)&=\rho_B^{-1}\,,
\end{aligned}
\qquad\qquad\qquad
\begin{aligned}
\lambda(t_i)&=\Delta_B^p\,,\\
\lambda(\rho_B)&=\Delta_B^q\,.
\end{aligned}
\]
The statements of Corollary~\ref{cor2_4} will be deduced from the following general lemma.
\begin{lem}\label{lem_transvections}
Let $G$ be a group with center $Z$, and let an endomorphism $\varphi$ of $G$ be a transvection of an automorphism, i.e.\ $\varphi=\phi\cdot\lambda$ with $\phi$ an automorphism of $G$ and $\lambda\colon G\to Z$ some homomorphism. Then $\varphi(Z)\subseteq Z$ and for the restriction $\varphi|_Z\colon Z\to Z$ we have:
\begin{enumerate}
\item $\varphi$ is injective if and only if $\varphi|_Z$ is injective;
\item $\varphi$ is surjective if and only if $\varphi|_Z$ is surjective (i.e.\ $\Im(\varphi|_Z)=Z$).
\end{enumerate}
\end{lem}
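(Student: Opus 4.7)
The plan is to exploit the homomorphism property of $\lambda$ together with the fact that $\phi$ is an automorphism in order to reduce questions about $\varphi$ on all of $G$ to questions about its restriction to $Z$. The inclusion $\varphi(Z)\subseteq Z$ will be essentially a definition chase: for $z\in Z$, the element $\phi(z)$ lies in $Z$ because automorphisms preserve the center, while $\lambda(z)\in Z$ by hypothesis on the target of $\lambda$, so $\varphi(z)=\phi(z)\lambda(z)\in Z$.

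For part~(1), I plan to establish the sharper equality $\Ker\varphi=\Ker(\varphi|_Z)$, from which both directions of the equivalence follow immediately. If $\varphi(g)=1$, then $\phi(g)=\lambda(g)^{-1}\in Z$, and applying $\phi^{-1}$, which is itself an automorphism and thus preserves the center, forces $g\in Z$. Hence $\Ker\varphi\subseteq Z$, and combined with the obvious inclusion $\Ker(\varphi|_Z)\subseteq\Ker\varphi$ this yields the desired equality.

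Part~(2) splits into the forward and backward implications. The forward direction uses the same observation as above: given $z\in Z$ and any $g\in G$ with $\varphi(g)=z$, rewriting $\phi(g)=z\,\lambda(g)^{-1}\in Z$ forces $g\in Z$, and so $z\in\Im(\varphi|_Z)$. The converse is where an actual construction is required. Given an arbitrary $g\in G$, I would set $g'=\phi^{-1}(g)$ and look for a preimage of the form $h=g'z$ with $z\in Z$. A short computation, using centrality of $\phi(z)$, $\lambda(z)$ and $\lambda(g')$, rewrites the equation $\varphi(g'z)=g$ as $\varphi|_Z(z)=\lambda(g')^{-1}$, which is solvable by the assumed surjectivity of $\varphi|_Z$ onto $Z$. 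The only genuine obstacle is this explicit construction of preimages; the remaining verifications amount to bookkeeping on where elements of $Z$ end up under $\phi$ and $\lambda$.
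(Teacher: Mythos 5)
Your proposal is correct and follows essentially the same route as the paper: both directions of (1) and the forward direction of (2) rest on the observation that $\varphi(g)\in Z$ forces $\phi(g)\in Z$ and hence $g\in Z$. For the converse of (2) you construct an explicit preimage $h=\phi^{-1}(g)\,z$ with $\varphi(z)=\lambda(\phi^{-1}(g))^{-1}$, whereas the paper shows $\Im\phi\subseteq\Im\varphi$ by writing $\phi(x)=\varphi(x)\varphi(z)^{-1}$ with $\varphi(z)=\lambda(x)$; these are the same computation packaged differently, and both are valid.
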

\begin{proof}
(1) The kernel of $\varphi$ is equal to $\ker\varphi=\{x\in G\mid \phi(x)\cdot\lambda(x)=1\}$. Thus, if $x\in\ker\varphi$, then $\phi(x)=\lambda(x)^{-1}\in Z$, hence $x\in Z$. Thus $\ker\varphi\subseteq Z$, which proves that if $\varphi|_Z$ is injective then $\varphi$ is injective. The opposite statement is obvious.

(2) Assume that $\varphi$ is surjective. Then for each $x\in G$ there exists $y\in G$ such that $x=\varphi(y)=\phi(y)\lambda(y)$. In particular, taking $x\in Z$, we get $\phi(y)=x\lambda(y)^{-1}\in Z$, hence $y\in Z$. I.e.\ $x=\varphi(y)$ for $y\in Z$, which means that $Z\subseteq \Im(\varphi|_Z)$, i.e.\ that $\varphi|_Z$ is surjective.

Now let $\varphi|_Z$ be surjective. Take arbitrary $x\in G$ and let $z\in Z$ be such that $\varphi(z)=\lambda(x)$. Then $\varphi(x)=\phi(x)\lambda(x)=\phi(x)\varphi(z)$ so that $\phi(x)=\varphi(x)\varphi(z)^{-1}\in\Im\varphi$. Since $\phi$ is an automorphism and $x$ is arbitrary, we conclude that $G=\Im\phi\subseteq \Im\varphi$, i.e.\ that $\varphi$ is surjective.
\end{proof}
To finish the proof of Corollary~\ref{cor2_4} we observe that $\varphi$ acts on the generator $\rho_B^{n}$ of the cyclic center of $A[B_{n}]$ as follows:
\[
\begin{aligned}
\varphi(\rho_B^{n})&=(\rho_B\Delta_B^q)^{n}=\Delta_B^{1+qn}\text{\quad\qquad in case (2a)},\\
\varphi(\rho_B^{n})&=(\rho_B^{-1}\Delta_B^q)^{n}=\Delta_B^{-1+qn}\text{\qquad in case (2b)}.
\end{aligned}
\]
In particular, we see from Lemma~\ref{lem_transvections} that $\varphi$ is always injective, and that $\varphi$ is surjective if and only if $\Delta_B^{qn\pm1}\in\{\Delta_B,\Delta_B^{-1}\}$, which is true if and only if $q=0$, since $n>2$.
\end{proof}

\begin{proof}[Proof of Corollary~\ref{cor2_5}]
We see from Corollary~\ref{cor2_4} that an arbitrary automorphism $\varphi$ of $A[B_{n}]$ is conjugate to one of the following two types:
\[
\text{(2a):\qquad}
\begin{aligned}
&\varphi(t_i)=t_i^\varepsilon\Delta_B^{p}\\
&\varphi(\rho_B)=\rho_B
\end{aligned}
\qquad\qquad \text{or} \qquad\qquad \text{(2b):\qquad}
\begin{aligned}
&\varphi(t_i)=t_{n-i}^\varepsilon\Delta_B^{p}\\
&\varphi(\rho_B)=\rho_B^{-1}
\end{aligned}
\]
for $0\le i\le n-1$, $p\in \Z$ and $\varepsilon=\pm1$. 
We see that an automorphism of type (2a) is equal to $T^p$ if $\varepsilon=1$ and to $\mu T^p$ if $\varepsilon=-1$, and an automorphism of type (2b) is equal to $\tau T^{-p}$ if $\varepsilon = -1$ and to $\mu \tau T^{-p}$ if $\varepsilon=1$.
Hence $\Aut(A[B_{n}])$ is generated by $\Inn(A[B_{n}])$, $T$, $\tau$ and $\mu$. 
We check directly that $\mu^2 = \tau^2 =1$, $\mu \tau = \tau \mu$, $\mu T \mu = T^{-1}$, and $\tau T = T \tau$.
Denote by $\xi\colon A[B_{n}]\to\Z$ the homomorphism that sends $t_i\mapsto 1$ and $\rho_B\mapsto1$. 
We observe that inner automorphisms preserve $\xi$, i.e.\ $\xi(gxg^{-1})=\xi(x)$ for any $g,x\in A[B_{n}]$. 
On the other hand, an arbitrary automorphism $\varphi$ of type (2b) above is not inner, since $\xi(\varphi(\rho_B))=-\xi(\rho_B)$, and $\xi(\rho_B)\ne 0$. 
For an automorphism $\varphi$ of type (2a) we have $\xi (\varphi(t_i))=\xi(t_i^\varepsilon\rho_B^{np})=\pm1+np\ne1=\xi(t_i)$, as $n\ge5$, 
except if $\varepsilon=1$ and $p=0$, which means $\varphi$ is the identity.
So, $\varphi$ is not inner in case (2a) if different from $\id$. 
We conclude that $\Inn(A[B_{n}])\cap\langle T,\tau,\mu\rangle=\{\id\}$.

Clearly, $\tau\notin\langle T,\mu\rangle$, since $T$ and $\mu$ fix $\rho_B$, but $\tau$ does not. Since $\tau$ and $\mu$ are commuting involutions and $T$ has infinite order, $T\notin\langle\tau,\mu\rangle$. Also $\mu(\rho_B)=\rho_B\ne\rho_B^{-1}=\tau(T^k(\rho_B))$ for any $k\in\Z$, hence $\mu\ne \tau T^k$ for any $k\in\Z$, i.e.\ $\mu\notin \langle T,\tau\rangle$ either. Thus all the decompositions of Corollary~\ref{cor2_5} are true as stated.
\end{proof}

\section{Proofs for endomorphisms of \texorpdfstring{$A[B_n]/Z(A[B_n])$}{A[Bn]/Z(A[Bn])}}\label{sec5}

\begin{proof}[Proof of Proposition~\ref{prop_lifts}]
Let $\phi\colon \ov{A[B_{n}]}\to \ov{A[B_{n}]}$ be an endomorphism. We are going to construct an endomorphism $\varphi\colon A[B_{n}]\to A[B_{n}]$ such that $\ov{\varphi}=\phi$. For that, we define elements $u_i\in A[B_{n}]$ which serve as the images $\varphi(r_i)$ of the standard generators $r_i$, in such a way that all Artin relations between $u_i$ are satisfied. Recall that the center of $A[B_{n}]$ is the cyclic group generated by $\Delta_B$. 
First, we choose any $u_1\in A[B_{n}]$ such that $\pi(u_1)=\phi(\bar r_1)$. Assume by induction that $2\le i\le n-1$ and an element $u_{i-1}$ with the property $\pi(u_{i-1})=\phi(\bar r_{i-1})$ is already defined. Choose arbitrary $u_i'\in A[B_{n}]$ such that $\pi(u_i')=\phi(\bar r_i)$. Since $\phi(\bar r_{i-1}\,\bar r_i\,\bar r_{i-1})=\phi(\bar r_i\,\bar r_{i-1}\,\bar r_i)$, there exists $k_i\in\Z$ such that $u_{i-1}\,u_i'\,u_{i-1}=u_i'\,u_{i-1}\,u_i'\,\Delta_B^{k_i}$. If we now define $u_i=u_i'\,\Delta_B^{k_i}$, we have $\pi(u_i)=\pi(u_i')=\phi(\bar r_i)$ and
\[
u_{i-1}\,u_i\,u_{i-1}=u_{i-1}\,u_i'\,u_{i-1}\,\Delta_B^{k_i}=u_i'\,u_{i-1}\,u_i'\,\Delta_B^{2k_i}=(u_i'\Delta_B^{k_i})\,u_{i-1}\,(u_i'\,\Delta_B^{k_i})=u_i\,u_{i-1}\,u_i\,,
\]
so that the braid relations between generators $u_i$ and $u_{i-1}$ are satisfied. 

Choose additionally $u_{n}\in A[B_{n}]$ to be an arbitrary element such that $\pi(u_{n})=\phi(\bar r_{n})$.

We claim that all other Artin relations between elements $u_i$ are also satisfied. To prove that, consider the homomorphism $z\colon A[B_{n}]\to\Z$, $r_i\mapsto 1$ for all $1\le i\le n$. If $i,j\in\{1,\dots,n\}$ are such that $r_ir_j=r_jr_i$, then we have $\phi(\bar r_i\,\bar r_{j})=\phi(\bar r_{j}\,\bar r_i)$, and hence $u_iu_j=u_ju_i\,\Delta_B^k$ for some $k\in\Z$. Applying the homomorphism $z$ to both sides of the latter equality, and recalling that $\Delta_B=(r_1r_2\dots r_n)^n$, we see that $z(\Delta_B)=n^2$ and
\[
z(u_i)+z(u_j)=z(u_j)+z(u_i)+kn^2,
\]
hence $k=0$, and therefore $u_iu_j=u_ju_i$.

Similarly, for elements $r_{n-1}$ and $r_{n}$ we have: $\phi(\bar r_{n-1}\,\bar r_{n}\,\bar r_{n-1}\,\bar r_{n})=\phi(\bar r_{n}\,\bar r_{n-1}\,\bar r_{n}\,\bar r_{n-1})$ so that
\[
u_{n-1}\,u_{n}\,u_{n-1}\,u_{n}=u_{n}\,u_{n-1}\,u_{n}\,u_{n-1}\,\Delta_B^\ell
\]
for some $\ell\in\Z$. Applying $z$ to both sides, we get:
\[
2z(u_{n-1})+2z(u_{n})=2z(u_{n})+2z(u_{n-1})+\ell n^2,
\]
and we again conclude that $\ell=0$ and that $u_{n-1}\,u_{n}\,u_{n-1}\,u_{n}=u_{n}\,u_{n-1}\,u_{n}\,u_{n-1}$.

Hence the correspondence $r_i\mapsto u_i$ for $1\le i\le n$, defines a homomorphism $\varphi\colon A[B_{n}]\to A[B_{n}]$, which is a lift of $\phi$.
\end{proof}

\begin{proof}[Proof of Theorem~\ref{thm2_8}]
Let $n\ge5$ and $\phi\colon \ov{A[B_{n}]}\to \ov{A[B_{n}]}$ be an endomorphism. We know from Proposition~\ref{prop_lifts} that $\phi$ admits a lift $\varphi\colon A[B_{n}]\to A[B_{n}]$. By Theorem~\ref{thm2_1}, there are three possibilities for $\varphi$ up to conjugation. We will analyze them and deduce the structure of $\phi$ in each of these cases.

{\it Case (1):} there exist $g,h\in A[B_{n}]$ such that $gh=hg$, and for all $0\le i\le n-1$ we have $\varphi(t_i)=g$ and $\varphi(\rho_B)=h$. Then, since $\varphi(Z(A[B_{n}]))\subseteq Z(A[B_{n}])$, we have $\varphi(\rho_B^{n})=\rho_B^{kn}$ for some $k\in\Z$. Hence $h^{n}=\varphi(\rho_B^n)=\rho_B^{kn}$. By \cite[Theorem~1.1]{LeeLee1}, it follows that $h$ and $\rho_B^k$ are conjugate in $A[B_{n}]$, which means that we may assume without loss of generality that $\varphi(\rho_B)=\rho_B^k$ and $\varphi(t_i)$ belongs to the centralizer $Z_{A[B_{n}]}(\rho_B^k)$ of $\rho_B^k$ in $A[B_{n}]$. On the other hand, a description of $Z_{A[B_{n}]}(\rho_B^k)$ is known from \cite[Proposition~3.5]{GWie1}, and it can be adapted for our situation as follows.

\begin{lem}[Gonz\'alez-Meneses--Wiest~\cite{GWie1}]
Let $d=\gcd(n,k)$ and $r={n}/{d}$. If $d=1$, then $Z_{A[B_{n}]}(\rho_B^k)=\langle \rho_B\rangle$, and if $d\ge 2$ then $Z_{A[B_{n}]}(\rho_B^k)=\langle \rho_B, t_{d}t_{2d}\dots t_{rd}\rangle$, with the convention that $t_{rd}=t_0$. In particular, if $d=n$, i.e.
 if $k$ is a multiple of $n$, then $Z_{A[B_{n}]}(\rho_B^k)=\langle \rho_B, t_0\rangle=A[B_{n}]$.\qed
\end{lem}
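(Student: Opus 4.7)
The approach I would take is to exploit the semidirect product decomposition $A[B_n] = A[\tilde A_{n-1}] \rtimes \langle \rho_B \rangle$ and reduce the problem to computing the fixed subgroup of a shift automorphism of $A[\tilde A_{n-1}]$. Any element $z \in A[B_n]$ has a unique expression $z = h \rho_B^m$ with $h \in A[\tilde A_{n-1}]$ and $m \in \Z$. Since $\rho_B^m$ always commutes with $\rho_B^k$, the condition $z \rho_B^k = \rho_B^k z$ is equivalent to $h = \rho_B^k h \rho_B^{-k}$, i.e., to $h$ being fixed by the automorphism $\sigma_k$ of $A[\tilde A_{n-1}]$ defined by $\sigma_k(t_i) = t_{i+k \bmod n}$. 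So the task becomes: describe $\Fix(\sigma_k)$, and then reassemble the centralizer as $\Fix(\sigma_k) \cdot \langle \rho_B \rangle$.

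Next I would verify the easy inclusion $\langle \rho_B, \xi \rangle \subseteq Z_{A[B_n]}(\rho_B^k)$, where $\xi := t_d t_{2d} \cdots t_{rd}$. Since $d \ge 2$, any two distinct indices $id, jd$ with $1 \le i < j \le r$ differ, modulo $n$, by a nonzero multiple of $d$, which is never $\pm 1 \bmod n$; hence $t_d, t_{2d}, \dots, t_{rd}$ pairwise commute in $A[\tilde A_{n-1}]$, so that $\xi$ is a well-defined element independent of the order of its factors. Applying $\sigma_k$ sends $\xi$ to $t_{d+k} t_{2d+k} \cdots t_{rd+k}$; since $k$ is a multiple of $d$, this is merely a cyclic permutation of the commuting factors, so $\sigma_k(\xi) = \xi$. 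Thus $\xi \in Z_{A[B_n]}(\rho_B^k)$, and $\rho_B$ obviously lies in this centralizer.

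For the reverse inclusion I would invoke Proposition~3.5 of~\cite{GWie1}, which supplies the required geometric input. Via the identification of $A[B_n]$ with the stabilizer of $p_n$ inside $\MM(\D, \PP_{n+1})$ (Proposition~\ref{prop_mcg}(5)), the element $\rho_B^k$ is represented by a rotation of $\D$ by angle $2\pi k / n$ fixing $p_n$; its centralizer therefore consists of the mapping classes admitting a $\rho_B^k$-equivariant representative. By the Gonz\'alez-Meneses--Wiest analysis of centralizers of periodic braids, such equivariant mapping classes are generated by the full rotation $\rho_B$ together with the $\rho_B^k$-orbit of the half-twist $t_d$; since that orbit is precisely $\{t_d, t_{2d}, \dots, t_{rd}\}$ and its elements pairwise commute, it amalgamates into the single element $\xi$. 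The two extreme cases warrant a brief separate check: when $d = 1$ the $\sigma_k$-orbit on $\{0, \dots, n-1\}$ is a single $n$-cycle, no non-trivial $\sigma_k$-invariant product of half-twists exists, $\Fix(\sigma_k) = \{1\}$, and the centralizer collapses to $\langle \rho_B \rangle$; when $d = n$ the shift $\sigma_k$ is trivial, $\xi = t_0$, and $\langle \rho_B, t_0 \rangle$ contains $\rho_B^i t_0 \rho_B^{-i} = t_i$ for all $i$, hence equals $A[B_n]$.

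The substantive obstacle is precisely this reverse inclusion: ruling out any further $\sigma_k$-invariant elements of $A[\tilde A_{n-1}]$ beyond those generated by the conjugates $\rho_B^j \xi \rho_B^{-j}$ for $j = 0, \dots, d-1$. This is exactly the content extracted from~\cite{GWie1}, and the remainder of the lemma is a translation of their geometric centralizer description back into the Artin-theoretic generators $\rho_B$ and $\xi$.
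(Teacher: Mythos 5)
Your proposal is correct and follows essentially the same route as the paper, which states this lemma without proof and simply cites Gonz\'alez-Meneses--Wiest~\cite[Proposition~3.5]{GWie1} for the substantive content (the reverse inclusion). Your added framing --- reducing to the fixed subgroup of the shift $\sigma_k$ via the semidirect product, checking that the factors of $t_d t_{2d}\cdots t_{rd}$ pairwise commute and are permuted by $\sigma_k$, and handling the extreme cases $d=1$ and $d=n$ --- is a sound and slightly more detailed account of the same adaptation.
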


Projecting to $\ov{A[B_{n}]}$, we get the following description of $\phi$ in Case (1):	
\begin{align*}
\phi(\bar\rho_B)&=\bar \rho_B^\kappa,\quad\text{for some}\quad\kappa\in\{0,\dots,n-1\},\\
\phi(\bar t_i)&=\bar g,
\end{align*}
with $\bar g$ belonging to $\pi(Z_{A[B_{n}]}(\rho_B^\kappa))$, which is equal to $\langle\bar \rho_B\rangle$ if $\gcd(n,\kappa)=1$ and to $\langle\bar\rho_B,\bar t_{d}\bar t_{2d}\dots\bar t_0\rangle$ if $d=\gcd(n,\kappa)\ne1$. Notice that, in general, we have $\pi(Z_{A[B_{n}]}(x))=Z_{\ov{A[B_{n}]}}(\pi(x))$ for arbitrary $x\in A[B_{n}]$. (Indeed, denoting $Z=Z(A[B_{n}])$, we have $Z_{\ov{A[B_{n}]}}(\pi(x))=\{\pi(g)\in \ov{A[B_{n}]} \mid gxg^{-1}\in xZ\}$. Now, using the homomorphism $\xi\colon A[B_{n}]\to \Z$, $t_i\mapsto 1$, $\rho_B\mapsto 1$, we observe that if $gxg^{-1}\in xZ$ then $gxg^{-1}=x$, for arbitrary $g\in A[B_{n}]$, i.e.\ $g\in Z_{A[B_n]}(x)$.) Thus we can write that $\bar g\in Z_{\ov{A[B_{n}]}}(\bar\rho_B^\kappa)$ in this case.

{\it Case (2):} there exist $\varepsilon\in\{\pm1\}$ and $p,q\in\Z$ such that for all $0\le i\le n-1$ we have one of the two options:
\[
\text{(2a):\qquad}
\begin{aligned}
\varphi(t_i)&=t_i^\varepsilon\,\Delta_B^{p}\\
\varphi(\rho_B)&=\rho_B\,\Delta_B^{q}
\end{aligned}
\qquad\qquad \text{or} \qquad\qquad \text{(2b):\qquad}
\begin{aligned}
\varphi(t_i)&=t_{n-i}^\varepsilon\,\Delta_B^{p}\\
\varphi(\rho_B)&=\rho_B^{-1}\,\Delta_B^{q}.
\end{aligned}
\]
We see that in both these cases $\varphi(\rho_B^{n})=\rho_B^{\pm n}\Delta_B^{qn}=\Delta_B^{qn\pm1}\in Z(A[B_{n}])$, as needed, and for $\phi=\ov{\varphi}$ we have:
\[
\text{(2a):\qquad}
\begin{aligned}
\phi(\bar t_i)&=\bar t_i^\varepsilon\,,\\
\phi(\bar\rho_B)&=\bar\rho_B\,,
\end{aligned}
\qquad\qquad \text{or} \qquad\qquad \text{(2b):\qquad}
\begin{aligned}
\phi(\bar t_i)&=\bar t_{n-i}^\varepsilon\,,\\
\phi(\bar \rho_B)&=\bar \rho_B^{-1}\,.
\end{aligned}
\]

{\it Case (3):} In this case the endomorphism $\varphi$ has the property $\varphi(\rho_B)=\rho_k\Delta_Y^{2r}\Delta_B^s$ for $k\in\{0,1\}$, $r,s\in\Z$. In particular, $\rho_k\Delta_Y^{2r}\in A[\tilde A_{n-1}]$ and $A[\tilde A_{n-1}]$ intersects trivially with $\ker\pi=\langle \rho_B^{n}\rangle$. Hence, $\pi(\varphi(\rho_B^{n}))=\pi(\rho_k^{n}\Delta_Y^{2rn})\ne1$, which means that $\varphi(\rho_B^{n})$ does not belong to $Z(A[B_{n}])$. We conclude that in Case (3), $\varphi(Z(A[B_n]))\not\subseteq Z(A[B_n])$, hence $\varphi$ cannot be a lift of an endomorphism of $\ov{A[B_{n}]}$.
\end{proof}

\begin{proof}[Proof of Proposition~\ref{prop2_9}]
If $\phi$ belongs to case (1), then the image of $\phi$ is abelian, and hence $\phi$ is non-injective, whereas if $\phi$ belongs to case (2a) or (2b), then $\phi$ is actually an automorphism. If $\phi=\conj_x\circ\psi$, then $\psi$ has the same properties, since $\conj_x$ is an automorphism. Thus, $\phi$ and $\psi$ cannot belong to case (1) and to either case (2a) or (2b) simultaneously. 
Consider a homomorphism 
\[
\bar\eta\colon \ov{A[B_n]}\longrightarrow \Z\oplus\Z/n\Z, \quad \bar t_i\longmapsto (1,0)\text{\quad for all $i$}, \quad\bar\rho_B\longmapsto (0,1).
\]
Notice that the image of $\bar\eta$ is an abelian group, hence $\bar\eta\circ\conj_x\circ\phi=\bar\eta\circ\phi$, for any $\phi$ and $x$, so it will suffice to show that cases (2a) and (2b) are distinguished by the image of the composition $\bar\eta\circ\phi$ and that parameters $\varepsilon$ and $\kappa$ are determined by $\bar\eta\circ\phi$ uniquely. We have the following formulas for $\bar\eta\circ\phi$:
\[
\begin{aligned}
\text{case (1):\qquad}\bar\eta\circ\phi:&\quad \bar t_i\longmapsto(\xi(g),0),\quad \bar\rho_B\longmapsto(0,\kappa);\\
\text{case (2a):\qquad}\bar\eta\circ\phi:&\quad \bar t_i\longmapsto(\varepsilon,0),\quad \bar\rho_B\longmapsto(0,1);\\
\text{case (2b):\qquad}\bar\eta\circ\phi:&\quad \bar t_i\longmapsto(\varepsilon,0),\quad \bar\rho_B\longmapsto(0,n-1).\\
\end{aligned}
\]
where $\xi$ denotes the homomorphism $\xi\colon A[\tilde A_{n-1}]\to\Z$, sending $\bar t_i\mapsto 1$ for all $0\le i\le n-1$. From these formulas we see that endomorphism $\phi$ cannot belong to cases (2a) and (2b) simultaneously, since $\bar\eta(\phi(\bar\rho_B))$ is different in the two cases. And clearly, parameters $\varepsilon$ and $\kappa$ are determined uniquely by these formulas.
\end{proof}

\begin{proof}[Proof of Corollary~\ref{cor2_10}]
From Theorem~\ref{thm2_8} we see that endomorphisms of case (1) are not surjective, since their image is abelian, and hence all automorphisms of $\ov{A[B_{n}]}$ up to conjugation come from case~(2). Hence $\Aut(\ov{A[B_{n}]})=\langle\Inn(\ov{A[B_{n}]}),\bar\tau,\bar\mu\rangle$. Since, in general, for an arbitrary automorphism $\alpha\in\Aut(G)$ and arbitrary element $g\in G$ of a group $G$ we have $\alpha\circ\conj_g\circ\alpha^{-1}=\conj_{\alpha(g)}$, the conjugation by $\bar\tau$ and by $\bar\mu$ inside $\Aut(\ov{A[B_{n}]})$ leaves $\Inn(\ov{A[B_{n}]})$ invariant. Also notice that $\bar\tau$ and $\bar\mu$ are commuting involutions. So to establish that $\Aut(\ov{A[B_{n}]})=\Inn(\ov{A[B_{n}]})\rtimes \langle\bar\tau,\bar\mu\rangle$ we just need to prove that $\langle \bar\tau, \bar\mu\rangle\cap\Inn(\ov{A[B_{n}]})=\{\id\}$.

Consider the homomorphism $\bar\xi\colon \ov{A[B_{n}]}\to \Z/n\Z$, $\bar t_i\mapsto 1$, $\bar\rho_B\mapsto 1$. Since $Z(A[B_{n}])=\langle \rho_B^{n}\rangle$, $\bar\xi$ is well-defined. We notice that for any $\bar g\in \ov{A[B_{n}]}$, we have $\bar\xi(\bar g x\bar g^{-1})=\bar\xi(x)$ for all $x\in \ov{A[B_{n}]}$, i.e.\ $\bar\xi$ is invariant under conjugation. Now we observe that $\bar\xi(\bar\tau\bar\mu(\bar\rho_B))=\bar\xi(\bar\tau(\bar\rho_B))=\bar\xi(\bar\rho_B^{-1})=-1$ whereas $\bar\xi(\bar\rho_B)=1$, which proves that automorphisms $\bar\tau$ and $\bar\tau\bar\mu$ are not inner. Similarly $\bar\xi(\bar\mu(\bar t_i))=\bar\xi(\bar t_i^{-1})=-1\ne1=\bar\xi(\bar t_i)$, so that $\bar\mu$ is not an inner automorphism either, which proves that $\langle \bar\tau, \bar\mu\rangle\cap\Inn(\ov{A[B_{n}]})=\{\id\}$.

To conclude that $\Inn(\ov{A[B_{n}]})\simeq \ov{A[B_{n}]}$ we prove that the center of $\ov{A[B_{n}]}$ is trivial (this fact is known to experts, though is not readily available in the literature). Indeed, let $\bar z\in Z(\ov{A[B_{n}]})$ and let $z\in A[B_{n}]$ be such that $\pi(z)=\bar z$. Then for an arbitrary $x\in A[B_{n}]$ we have $zxz^{-1}=xz'$ for some $z'\in Z(A[B_{n}])=\langle\rho_B^{n}\rangle$. Considering the homomorphism $\xi\colon A[B_{n}]\to\Z$, $t_i\mapsto 1$, $\rho_B\mapsto 1$, we notice that $\xi(x)=\xi(zxz^{-1})=\xi(xz')=\xi(x)+\xi(z')$, so that $\xi(z')=0$. But $z'=\rho_B^{kn}$ for some $k\in\Z$, hence, $\xi(z')=kn$ which is equal to $0$ if and only if $k=0$, i.e.\ if and only if $z'=1$. Hence $zxz^{-1}=x$ for every $x\in A[B_{n}]$, which means that $z\in Z(A[B_{n}])$ and $\bar z=1$.
\end{proof}

\begin{proof}[Proof of Corollary~\ref{cor2_12}]
Indeed, recall that $\ov{A[B_{n}]}=A[\tilde A_{n-1}]\rtimes\langle \bar\rho_B\rangle$, and from Corollary~\ref{cor2_10} we see that $\Aut(\ov{A[B_{n}]})\simeq \Inn(\ov{A[B_{n}]})\rtimes \langle\bar\tau,\bar\mu\rangle\simeq \bigl(A[\tilde A_{n-1}]\rtimes\langle \bar\rho_B\rangle\bigr) \rtimes \langle\bar\tau,\bar\mu\rangle$. Clearly, conjugation by $\bar\rho_B$ preserves $A[\tilde A_{n-1}]$ and we see directly that $\bar\tau$, $\bar\mu$ leave $A[\tilde A_{n-1}]$ invariant. This shows that $A[\tilde A_{n-1}]$ is a characteristic subgroup of $\ov{A[B_n]}$. On the other hand, there exist endomorphisms of case~(1) of Theorem~\ref{thm2_8} which send $\bar t_i$ to a nontrivial element of $\langle\bar\rho_B\rangle$, i.e.\ $A[\tilde A_{n-1}]$ is not a fully characteristic subgroup of $\ov{A[B_{n}]}$.
\end{proof}

\section{Additional remarks}\label{sec_rem}
\begin{rem}\label{rem:delta}
The word representing the element $\delta$, which participates in the expressions for endo- and automorphisms of $A[B_{n}]$ in terms of the standard generators of $A[B_{n}]$,
\[
\delta=r_{n-1}r_{n-2}\dots r_2\,(r_1^2)\,r_2\dots r_{n-2}r_{n-1}\,,
\]
appears in a few other contexts in the literature. One situation where this expression shows up is a natural embedding of the Artin group $A[A_{n-2}]$ into that of type $A[A_{n-1}]$. 
To keep notation the same, assume that $A[A_{n-1}]=A_{X_1}[B_n]$ is generated by $X_1=\{r_1,\dots,r_{n-1}\}$ and that $A[A_{n-2}]=A_{X_2}[B_n]$ is generated by $X_2=\{r_1,\dots,r_{n-2}\}$. 
Then the element $\delta$ is equal to the quotient of two generators of the centers of $A[A_{n-1}]$ and $A[A_{n-2}]$, i.e.\ 
\[
\delta = \Delta_{X_1}[B_n]^2\cdot\Delta_{X_2}[B_n]^{-2}\,, 
\]
where $\Delta_{X_1}[B_n]$ (resp. $\Delta_{X_2}[B_n]$) denotes the Garside element of $A[A_{n-1}]=A_{X_1}[B_n]$ (resp. $A[A_{n-2}]=A_{X_2}[B_n]$).
The proof of this follows from~\cite[Lemma~5.1]{CasPar1}.

Another context where the element $\delta$ appears, is the presentation of the mapping class group of a closed orientable surface. Namely, let $\Sigma_g$ be a closed orientable surface of genus $g$, and let $c_1,\dots,c_{2g+1}$ be a chain of isotopy classes of simple closed curves in $\Sigma_g$, for which $i(c_k,c_{k+1})=1$ and $i(c_k,c_l)=0$ for $|k-l|>1$. Then the so-called hyperelliptic involution $h$ of $\Sigma_g$ is given by the formula (see~\cite[section~5.1.4]{FarMar1}):
\[
h=T_{c_{2g+1}}T_{c_{2g}}\dots T_{c_1}T_{c_1}\dots T_{c_{2g}}T_{c_{2g+1}},
\]
which formally coincides with the expression for $\delta$ if one sets $r_i=T_{c_i}$, for all $1\le i\le n-1=2g+1$. This coincidence can be explained as follows. It is well-known that the so-called chain relations between the Dehn twists corresponding to a chain of simple closed curves on a surface can be written in terms of the generators of the centers of the corresponding Artin subgroups. Namely (see~\cite[Proposition~2.12]{LabPar1}),
\[
\Delta(T_{c_1},\dots,T_{c_{2g+1}})^2=(T_{c_1}\dots T_{c_{2g+1}})^{2g+2}=T_{d_1}T_{d_2},
\]
where $d_1$, $d_2$ are the two boundary components of the regular neighborhood of the chain $\{c_i\}_{i=1}^{2g+1}$. For an even number of curves we get:
\[
\Delta(T_{c_1},\dots,T_{c_{2g}})^4=\bigl((T_{c_1}\dots T_{c_{2g}})^{2g+1}\bigr)^2=T_d,
\]
where $d$ is the single boundary component of the regular neighborhood of the chain $\{c_i\}_{i=1}^{2g}$. It is also known (see e.g.~\cite[Table~1]{Matsu1}) that the square of $\Delta(T_{c_1},\dots,T_{c_{2g}})$ is equal to the hyperelliptic involution $h$ on $\Sigma_g$. Since on the closed surface $\Sigma_g$ all twists $T_{d_1}$, $T_{d_2}$, $T_d$ are trivial, we get:
\[
h=\Delta(T_{c_1},\dots,T_{c_{2g}})^2=\id\cdot\Delta(T_{c_1},\dots,T_{c_{2g}})^{-2}=\Delta(T_{c_1},\dots,T_{c_{2g+1}})^2\cdot \Delta(T_{c_1},\dots,T_{c_{2g}})^{-2},
\]
which is exactly the above expression for $\delta$ as a quotient of the two generators of the centers of the corresponding Artin subgroups.
\end{rem}

\begin{rem}
There is a considerable interest in finding embeddings of different Artin groups into one another and into mapping class groups, see e.g.~\cite{Crisp2,BelMar2,Morta1,Ryffe1,Pool1}. In this regard, Corollary~\ref{cor2_3} provides a few new ways to embed Artin group $A[I_2(4)]=A[B_2]$ into $A[B_{n}]$ and $A[A_{n}]$. Namely, since $r_{n-1}$ and $r_{n}$ satisfy the Artin relation of length four: $r_{n-1}\,r_{n}\,r_{n-1}\,r_{n}=r_{n}\,r_{n-1}\,r_{n}\,r_{n-1}$, the elements $\varphi(r_{n-1})$ and $\varphi(r_{n})$ also satisfy this relation, for any homomorphism $\varphi$. In their simplest form, for endomorphisms $\varphi$ of types (IIIb) and (IIb) of Corollary~\ref{cor2_3}, these relations read:
\begin{align}
\delta\, r_{n-1}^{-1}\,\delta \,r_{n-1}^{-1} &= r_{n-1}^{-1}\,\delta\, r_{n-1}^{-1}\,\delta\,,\label{art:1}\\
(\delta r_{n})\, r_{n-1}^{-1}\,(\delta r_{n})\, r_{n-1}^{-1} &= r_{n-1}^{-1}\,(\delta r_{n})\,r_{n-1}^{-1}\,(\delta r_{n})\,.\label{art:2}
\end{align}
Denote $\delta'=r_{n-1}^{-1}\,\delta\, r_{n-1}^{-1}=r_{n-2}\dots r_2\,(r_1)^2\,r_2\dots r_{n-2}$. Then relation~\eqref{art:1} can be written as $\delta\,\delta'=\delta'\,\delta$ and it is equivalent to yet another Artin relation of length four:
\begin{equation}
\delta'\, r_{n-1}\, \delta'\, r_{n-1} = r_{n-1}\, \delta'\, r_{n-1}\, \delta'\,.\label{art:3}
\end{equation}
Of course, these relations are satisfied since $\varphi$ is a homomorphism, by Corollary~\ref{cor2_3}. However, one can see the validity of these relations directly. For relations~\eqref{art:1} and \eqref{art:3}, we recall the expression for $\delta$ and $\delta'$ as the quotient of two generators of centers of the respective parabolic subgroups (see Remark~\ref{rem:delta}): 
$\delta=\Delta_{X_1}[B_n]^2\cdot\Delta_{X_2}[B_n]^{-2}$ and 
$\delta'=\Delta_{X_2}[B_n]^2\cdot\Delta_{X_3}[B_n]^{-2}$, where $X_3 = \{r_1, \dots, r_{n-3}\}$.
Since all the four participating factors commute with one another, the identity $\delta\,\delta'=\delta'\,\delta$ holds.

To see the validity of relation~\eqref{art:2} directly, we note that $r_{n}$ commutes with $\delta'$ and that $r_{n-1}^{-1}\,\delta=\delta'\,r_{n-1}$. Then (we apply obvious identities to the underlined subwords):
\begin{multline*}
(\delta r_{n})\, \underline{r_{n-1}^{-1}\,(\delta}\, r_{n})\, r_{n-1}^{-1} = \delta r_{n}\, \delta'\,\underline{r_{n-1} r_{n}r_{n-1}^{-1}r_{n}^{-1}}\,\cdot r_{n}= \delta\, \underline{r_{n}\, \delta'\,r_{n}^{-1}}\, r_{n-1}^{-1}r_{n} r_{n-1}\,\cdot r_{n}=\\
\underline{\delta\,\delta'}\,r_{n-1}^{-1}\,r_{n} r_{n-1}\,r_{n}=
\underline{\delta'\,\delta\,r_{n-1}^{-1}}\,r_{n} r_{n-1}\,r_{n}=
r_{n-1}^{-1}\delta\,\underline{r_{n-1}^{-1}\delta\,r_{n-1}^{-1}}\,r_{n}\, r_{n-1}\,r_{n}=
r_{n-1}^{-1}\delta\,\underline{\delta'\,r_{n}}\, r_{n-1}\,r_{n}=\\
r_{n-1}^{-1}\,(\delta\,r_{n})\,\underline{\delta'\,r_{n-1}}\, r_{n}=
r_{n-1}^{-1}\,(\delta\,r_{n})\,r_{n-1}^{-1}\,(\delta\, r_{n})\,.
\end{multline*}
\end{rem}

\section{Some open questions}\label{sec_oq}

In recent years there has been a series of publications concerning automorphisms and endomorphisms of certain types of spherical and affine Artin groups. As a rule, in these publications small rank cases were excluded, since the methods employed were not applicable for them. In this section we summarized what is known and what is not known concerning automorphisms and endomorphisms of Artin groups of these types, to the best of our knowledge. Ideally, the problem of determining the automorphism group of a group $G$ should involve obtaining a description of $\Aut(G)$, $\Out(G)$ and deciding whether the short exact sequence $1\to\Inn(G)\to\Aut(G)\to\Out(G)\to1$ splits. The problem of determining the endomorphisms of a group $G$ involves obtaining a sensible explicit description of all endomorphisms of $G$, possibly, up to an equivalence of some sort.

\subsubsection*{Type \texorpdfstring{$I_2(m)$, $m\ge3,\, m\ne\infty$}{I2(m)}:}
\begin{itemize}
\item Coxeter graph:\qquad
\begin{tikzpicture}[scale=0.7,double distance=2.3pt,thick]
\begin{scope}[scale=1.33] 
\fill (0,0) circle (2.3pt); 
\fill (1,0) circle (2.3pt); 
\draw (0,0)--(1,0);
\draw (0.5,0.25) node {\small $m$};
\end{scope}
\end{tikzpicture}
\item Automorphisms: $\Out(A[I_2(m)])$ was determined in~\cite[Theorem~E]{GiHoMeRa1}, $\Aut(A[I_2(m)])$ was determined in~\cite[Theorem~5.1]{CriPar1}.
\item Endomorphisms: the problem is open (for $m=3$ see type $A_n$ for $n=2$ below).
\end{itemize}

\subsubsection*{Type \texorpdfstring{$A_n$, $n\ge1$}{An}:}
\begin{itemize}
\item Coxeter graph:\qquad \raisebox{-2.5ex}{
\begin{tikzpicture}[scale=0.7,double distance=2.3pt,thick]
\begin{scope}[scale=1.33] 
\fill (0,0) circle (2.3pt) node [below=2pt,blue] {\scriptsize$1$};
\fill (1,0) circle (2.3pt) node [below=2pt,blue] {\scriptsize$2$};
\fill (2,0) circle (2.3pt) node [below=2pt,blue] {\scriptsize$3$};
\fill (4,0) circle (2.3pt) node [below=1pt,blue] {\scriptsize$n{-}1$};
\fill (5,0) circle (2.3pt) node [below=2pt,blue] {\scriptsize$n$};
\draw [very thick] (0,0)--(1,0)--(2,0)--(2.5,0);
\draw [very thick,dashed] (2.5,0)--(3.5,0);
\draw [very thick] (3.5,0)--(4,0)--(5,0);
\end{scope}
\end{tikzpicture}
}
\item Automorphisms: determined in~\cite{DyeGro1} and, independently, in~\cite{ChaCri1} for $n\ge3$. For $n=2$ see also the references for type $I_2(m)$ with $m=3$. 
\item Endomorphisms: determined for $n\ge5$ in \cite{Caste1}, and, independently, for $n\ge4$ in~\cite{ChKoMa1}, for $n\ge3$ in~\cite{Orevk1}, and for $n=2$ a description without proof is given in~\cite[Proposition~1.8]{Orevk1}. 
\end{itemize}

\subsubsection*{Type \texorpdfstring{$B_n$, $n\ge2$}{Bn}:}
\begin{itemize}
\item Coxeter graph:\qquad \raisebox{-2.5ex}{
\begin{tikzpicture}[scale=0.7]
\begin{scope}[scale=1.33] 
\fill (0,0) circle (2.3pt) node [below=2pt,blue] {\scriptsize$1$};
\fill (1,0) circle (2.3pt) node [below=2pt,blue] {\scriptsize$2$};
\fill (2,0) circle (2.3pt) node [below=2pt,blue] {\scriptsize$3$};
\fill (4,0) circle (2.3pt) node [below=1pt,blue] {\scriptsize $n{-}1$};
\fill (5,0) circle (2.3pt) node [below=2pt,blue] {\scriptsize $n$};
\draw [very thick] (0,0)--(1,0)--(2,0)--(2.5,0);
\draw [very thick,dashed] (2.5,0)--(3.5,0);
\draw [very thick] (3.5,0)--(4,0)--(5,0);
\draw (4.5,0.3) node {\small $4$};
\end{scope}
\end{tikzpicture}
}
\item Automorphisms: determined in~\cite{ChaCri1} for $n\ge3$ and, independently, in the present paper for $n\ge5$. For case $n=2$, see type $I_2(m)$ with $m=4$.
\item Endomorphisms: determined for $n\ge5$ in the present paper. The problem is open for $n=2,3,4$.
\end{itemize}

\subsubsection*{Type \texorpdfstring{$D_n$, $n\ge4$}{Dn}:}
\begin{itemize}
\item Coxeter graph:\qquad \raisebox{-4.5ex}{
\begin{tikzpicture}[scale=0.7, very thick]
\begin{scope}[scale=1.33] 
\fill (0,0) circle (2.3pt) node [below=2pt,blue] {\scriptsize$1$}; 
\fill (1,0) circle (2.3pt) node [below=2pt,blue] {\scriptsize$2$}; 
\fill (2,0) circle (2.3pt) node [below=2pt,blue] {\scriptsize$3$}; 
\fill (4,0) circle (2.3pt) node [below=1pt,blue] {\scriptsize$n{-}2$}; 
\fill (5,0.5) circle (2.3pt) node [below=1pt,blue] {\scriptsize$n{-}1$}; 
\fill (5,-0.5) circle (2.3pt) node [below=1pt,blue] {\scriptsize$n$}; 
\draw (5,0.5)--(4,0)--(5,-0.5);
\draw (0,0)--(1,0)--(2,0)--(2.5,0);
\draw [dashed] (2.5,0)--(3.5,0);
\draw (3.5,0)--(4,0);
\end{scope}
\end{tikzpicture}
}
\item Automorphisms: determined in~\cite{CasPar1} for $n\ge6$ and in~\cite{Sorok1} for $n=4$. The problem is open for $n=5$.
\item Endomorphisms: determined in~\cite{CasPar1} for $n\ge6$. The problem is open for $n=4,5$.
\end{itemize}

\subsubsection*{Type \texorpdfstring{$\tilde A_{n}$, $n\ge1$}{\textasciitilde An}:}
\vspace{-7ex}

\begin{itemize}
\item Coxeter graph:\qquad \raisebox{-2.5ex}{
\begin{tikzpicture}[scale=0.7, very thick]
\begin{scope}[scale=1.33] 
\fill (0,0) circle (2.3pt) node [below=2pt,blue] {\scriptsize$1$}; 
\fill (1,0) circle (2.3pt) node [below=2pt,blue] {\scriptsize$2$}; 
\fill (3,0) circle (2.3pt) node [below=1pt,blue] {\scriptsize$n{-}1$}; 
\fill (4,0) circle (2.3pt) node [below=1pt,blue] {\scriptsize$n$}; 
\fill (2,1) circle (2.3pt) node [above=2pt,blue] {\scriptsize$0$}; 
\fill [white] (2,2) circle (2.4pt);
\draw [very thick](0,0)--(1,0)--(1.5,0);
\draw [very thick,dashed] (1.5,0)--(2.5,0);
\draw [very thick](2.5,0)--(3,0)--(4,0);
\draw [very thick](0,0)--(2,1)--(4,0);
\end{scope}
\end{tikzpicture}
} \quad for $n\ge2$, \qquad  \raisebox{-2.5ex}{
\begin{tikzpicture}[scale=0.7,double distance=2.3pt,very thick]
\begin{scope}[scale=1.33] 
\fill (0,0) circle (2.3pt) node [below=2pt,blue] {\scriptsize$0$};  
\fill (1,0) circle (2.3pt) node [below=2pt,blue] {\scriptsize$1$};  
\draw (0,0)--(1,0);
\draw (0.5,0.25) node {\small $\infty$};
\end{scope}
\end{tikzpicture}
} \quad for $n=1$.

\item Automorphisms: determined in~\cite{ChaCri1} for $n\ge2$ and, independently, in~\cite{ParSor1} for $n\ge4$. $A[\tilde A_1]\simeq F_2$, the free group of rank $2$, so the case $n=1$ may be considered as known.
\item Endomorphisms: determined in~\cite{ParSor1} for $n\ge4$. The problem is open for $n=2,3$.
\end{itemize}

\subsubsection*{Type \texorpdfstring{$\tilde C_n$, $n\ge2$}{\textasciitilde Cn}:}
\begin{itemize}
\item Coxeter graph:\qquad \raisebox{-2.5ex}{
\begin{tikzpicture}[scale=0.7]
\begin{scope}[scale=1.33] 
\fill (0,0) circle (2.3pt) node [below=2pt,blue] {\scriptsize$0$};
\fill (1,0) circle (2.3pt) node [below=2pt,blue] {\scriptsize$1$};
\fill (2,0) circle (2.3pt) node [below=2pt,blue] {\scriptsize$2$};
\fill (4,0) circle (2.3pt) node [below=1pt,blue] {\scriptsize $n{-}1$};
\fill (5,0) circle (2.3pt) node [below=2pt,blue] {\scriptsize $n$};
\draw [very thick] (0,0)--(1,0)--(2,0)--(2.5,0);
\draw [very thick,dashed] (2.5,0)--(3.5,0);
\draw [very thick] (3.5,0)--(4,0)--(5,0);
\draw (4.5,0.3) node {\small $4$};
\draw (0.5,0.3) node {\small $4$};
\end{scope}
\end{tikzpicture}
}
\item Automorphisms: determined in~\cite{ChaCri1} for $n\ge2$.
\item Endomorphisms: the problem is open for all $n\ge2$.
\end{itemize}

\begin{problem}
Establish the remaining open cases in the above list. In particular, determine $\Aut(A[D_5])$.
\end{problem}

Except for type $I_2(m)$ and the original paper~\cite{DyeGro1} on $A[A_n]$, all articles cited above were relying on embedding of the respective Artin groups into mapping class groups of surfaces, and used topological methods to establish results. These methods are unavailable for the rest of spherical and affine Artin groups, so it would be desirable, if possible, to develop other methods, which do not use mapping class groups, to determine the automorphism groups of these Artin groups and describe their endomorphisms.

\begin{probstar}
Determine the automorphism group of any irreducible spherical or affine Artin group, not listed above. 
\end{probstar}

\begin{probstar}
Describe endomorphisms of any irreducible spherical or affine Artin group, not listed above. 
\end{probstar}


\frenchspacing


\end{document}